\newcommand{\N}{\mathbb{N}}
\newcommand{\R}{\mathbb{R}}
\renewcommand{\S}{\mathbb{S}}
\newcommand{\cH}{\mathcal{H}}
\newcommand{\cN}{\mathcal{N}}
\newcommand{\cP}{\mathcal{P}}
\newcommand{\cQ}{\mathcal{Q}}
\newcommand{\cW}{\mathcal{W}}
\newcommand{\sC}{\mathscr{C}}
\newcommand{\sF}{\mathscr{F}}
\newcommand{\bv}{\mathbf v}
\newcommand{\tV}{\tilde{V}}
\newcommand{\diam}{\mbox{\rm diam}}
\newcommand{\supp}{\mbox{\rm supp}}
\renewcommand{\div}{\mbox{\rm div}}
\newcommand{\wsc}{\overset{\star}{\rightharpoonup}}
\newcommand{\epty}{\emptyset}
\newcommand{\sm}{\setminus}
\newcommand{\lgl}{\langle}
\newcommand{\rgl}{\rangle}
\newcommand{\pa}{\partial}
\newcommand{\con}{\subset}
\newcommand{\res}{
	\,\raisebox{-.127ex}{\reflectbox{\rotatebox[origin=br]{-90}{$\lnot$}}}\,
} 
\newcommand{\ep}{\varepsilon} 
\newcommand{\ga}{\gamma}
\newcommand{\be}{\beta}
\newcommand{\de}{\delta}
\newcommand{\ze}{\zeta}
\newcommand{\ro}{\rho}
\newcommand{\si}{\sigma}
\newcommand{\te}{\theta}
\newcommand{\De}{\Delta}
\newcommand{\Ga}{\Gamma}
\newcommand{\La}{\Lambda}
\newcommand{\Si}{\Sigma}
\newcommand{\Om}{\Omega}
\newcommand{\vp}{\varphi}
\newcommand{\sff}{\mbox{\rm II}}
\theoremstyle{plain}
\newtheorem{thm}{Theorem}[section] 
\theoremstyle{plain}
\theoremstyle{plain}
\newtheorem{prop}[thm]{Proposition}
\theoremstyle{plain}
\newtheorem{lemma}[thm]{Lemma}
\theoremstyle{plain}
\newtheorem{cor}[thm]{Corollary}
\theoremstyle{definition}
\theoremstyle{definition}
\newtheorem{remark}[thm]{Remark}
\theoremstyle{definition}
\title[Connected surfaces with boundary minimizing the Willmore energy]{Connected surfaces with boundary\\ minimizing the Willmore energy}
\author{Matteo Novaga}
\address{Dipartimento di Matematica, Universit\`{a} di Pisa, Largo Bruno Pontecorvo 5, 56127 Pisa, Italy}
\email{matteo.novaga@unipi.it}
\author{Marco Pozzetta}
\address{Dipartimento di Matematica, Universit\`{a} di Pisa, Largo Bruno Pontecorvo 5, 56127 Pisa, Italy}
\email{pozzetta@mail.dm.unipi.it}
\date{\today}
\newtheorem*{rep@theorem}{\rep@title}
\newcommand{\newreptheorem}[2]{%
	\newenvironment{rep#1}[1]{%
		\def\rep@title{#2 \ref{##1}}%
		\begin{rep@theorem}}%
		{\end{rep@theorem}}}
\begin{document}

\begin{abstract}
	For a given family of smooth closed curves $\ga^1,...,\ga^\alpha\con\R^3$ we consider the problem of finding an elastic \emph{connected} compact surface $M$ with boundary $\ga=\ga^1\cup...\cup\ga^\alpha$. This is realized by minimizing the Willmore energy $\cW$ on a suitable class of competitors. While the direct minimization of the Area functional may lead to limits that are disconnected, we prove that, if the infimum of the problem is $<4\pi$, there exists a connected compact minimizer of $\cW$ in the class of integer rectifiable curvature varifolds with the assigned boundary conditions. This is done by proving that varifold convergence of bounded varifolds with boundary with uniformly bounded Willmore energy implies the convergence of their supports in Hausdorff distance. Hence, in the cases in which a small perturbation of the boundary conditions causes the non-existence of Area-minimizing connected surfaces, our minimization process models the existence of optimal elastic connected compact generalized surfaces with such boundary data.
	We also study the asymptotic regime in which the diameter of the optimal connected surfaces is arbitrarily large. Under suitable boundedness assumptions, we show that rescalings of such surfaces converge to round spheres. The study of both the perturbative and the asymptotic regime is motivated by the remarkable case of elastic surfaces connecting two parallel circles located at any possible distance one from the other.\\
	The main tool we use is the monotonicity formula for curvature varifolds (\cite{SiEX}, \cite{KuSc}) that we extend to varifolds with boundary, together with its consequences on the structure of varifolds with bounded Willmore energy.
\end{abstract}
\maketitle

\tableofcontents	

\vspace{-0.5cm} 

\noindent{\small\textbf{MSC Codes (2010):} 49Q20, 74E10, 49J40, 53A05, 49Q10.}

\vspace{0.2cm} 

\noindent{\small\textbf{Keywords:} Willmore energy, Monotonicity formula, Varifolds, Connectedness, Existence.}

\vspace{0.2cm}

\section{Introduction}

\subsection{The Willmore energy} Let $\vp:\Si\to\R^3$ be an immersion of a $2$-dimensional manifold $\Si$ with boundary $\pa \Si$ in the Euclidean space $\R^3$. We say that an immersion is smooth if it is of class $C^2$. In such a case we define the second fundamental form of $\vp$ in local coordinates as
\[
\sff_{ij}(p)=(\pa_{ij}\vp(p))^\perp,
\]
for any $p\in \Si\sm\pa \Si$, where $(\cdot)^\perp$ denotes the orthogonal projection onto $(d\vp(T_p\Si))^\perp$. Denoting by $g_{ij}=\lgl\pa_i\vp,\pa_j\vp\rgl$ the induced metric tensor on $\Si$ and by $g^{ij}$ the components of its inverse, we define the mean curvature vector by
\[
\vec{H}(p)=\frac12 g^{ij}(p)\sff_{ij}(p),
\]
for any $p\in \Si\sm\pa \Si$, where sum over repeated indices is understood. The normalization of $\vec{H}$ is such that the mean curvature vector of the unit sphere points inside the ball and it has norm equal to one. Denoting by $\mu_\vp$ the volume measure on $\Si$, we define the Willmore energy of $\vp$ by
\[
\cW(\vp)=\int_\Si |\vec{H}|^2\,d\mu_\vp.
\]
For an immersion $\vp:\Si\to\R^3$ we will denote by $co_\vp:\pa \Si\to \R^3$ the conormal field, i.e. the unit vector field along $\pa\Si$ belonging to $d\vp(T\Si)\cap \left(d\vp|_{\pa\Si}(T\pa\Si)\right)^\perp$ and pointing outside of $\vp(\Si)$.\\

\noindent The study of variational problems involving the Willmore energy has begun with the works of T. Willmore (\cite{Wi65}, \cite{WiRG}), in which he proved that round spheres minimize $\cW$ among every possible immersed compact surface without boundary. The Willmore energy of a sphere is $4\pi$. In \cite{Wi65} the author proposed his celebrated conjecture, claiming that the infimum of $\cW$ among immersed smooth tori was $2\pi^2$. Such conjecture (eventually proved in \cite{MaNeWC}) motivated the variational study of $\cW$ in the setting of smooth surfaces without boundary. In such setting many fundamental results have been achieved, and some of them (in particular \cite{SiEX}, \cite{KuSc}, and \cite{ScBP}) developed a very useful variational approach, that today goes under the name of Simon's ambient approach. Such method relies on the measure theoretic notion of varifold as a generalization of the concept of immersed submanifold. We remark that, more recently, an alternative and very powerful variational method based on a weak notion of immersions has been developed in \cite{RiAA}, \cite{RiLI}, and \cite{RiVP}.

\noindent Following Simon's approach, the concept of curvature varifold with boundary (\cite{Ma}, \cite{Hu}), considered as a good generalization of smooth immersed surfaces, will be fundamental in this work. Such notion is recalled in Appendix A. We will always consider integer rectifiable curvature varifolds with boundary, that we will usually call simply varifolds. Roughly speaking a rectifiable varifold is identified by a couple $\bv(M,\te_V)$, where $M\con \R^3$ is $2$-rectifiable and $\te_V:M\to\N_{\ge1}$ is locally $\cH^2$-integrable on $M$, and we think at it as a $2$-dimensional object in $\R^3$ whose points $p$ come with a weight $\te_V(p)$. We recall here that a $2$-dimensional varifold $V=\bv(M,\te_V)$ has weight measure $\mu_V=\te_V\cH^2\res M$, that is a Radon measure on $\R^3$; moreover it has (generalized) mean curvature vector $\vec{H}\in L^1_{loc}(\mu_V;\R^3)$ and generalized boundary $\si_V$ if
\[
\int \div_{TM} X\,d\mu_V=-2\int \lgl \vec H, X \rgl \,d\mu_V + \int X\,d\si_V \qquad\forall\,X\in C^1_c(\R^3;\R^3),
\]
where $\si_V$ is a Radon $\R^3$-valued measure on $\R^3$ of the form $\si_V=\nu_V\si$, with $|\nu_V|=1$ $\si$-ae and $\si$ is singular with respect to $\mu_V$; also $\div_{TM}X(p)=tr(P^\top\circ\nabla X(p))$ where $P^\top$ is the matrix corresponding to the projection onto $T_pM$, that is defined $\cH^2$-ae on $M$.\\
By analogy with the case of sooth surfaces, we define the Willmore energy of a varifold $V=\bv(M,\te_V)$ by setting
\[
\cW(V)=\int |\vec{H}|^2\,d\mu_V \,\,\in[0,+\infty],
\]
if $V$ has generalized mean curvature $\vec{H}$, and $\cW(V)=+\infty$ otherwise.\\
A rectifiable varifold $V=\bv(M,\te_V)$ defines a Radon measure on $G_2(\R^3):=\R^3\times G_{2,3}$, where $G_{2,3}$ is the Grassmannian of $2$-subspaces of $\R^3$, identified with the metric space of matrices corresponding to the orthogonal projection on such subspaces. More precisely for any $f\in C^0_c(G_2(\R^3))$ we define
\[
V(f):=\int_{G_2(\R^3)} f(p,P)\,dV(p,P)=\int_{\R^3} f(p,T_pM) \,d\mu_V(p).
\]
In this way a good notion of convergence in the sense of varifolds is defined, i.e. we say that a sequence $V_n=\bv(M_n,\te_{V_n})$ converges to $V=\bv(M,\te_V)$ as varifolds if
\[
V_n(f)\to V(f),
\]
for any $f\in C^0_c(G_2(\R^3))$.\\

\noindent More recently, varifolds with boundary and Simon's method have been used also in the study of variational problems in the presence of boundary conditions. A seminal work is \cite{ScBP}, in which the author constructs branched surfaces with boundary that are critical points of the Willmore energy with imposed clamped boundary conditions, i.e. with fixed boundary curve and conormal field. Another remarkable work is \cite{Ei19}, in which an analogous result is achieved in the minimization of the Helfich energy. We also mention \cite{Po19}, in which the minimization problem of the Willmore energy of surfaces with boundary with fixed topology is considered, and the only constraint is the boundary curve, while the conormal is free, yielding the so-called natural Navier boundary condition.

\medskip

\subsection{Elastic surfaces with boundary}

\noindent If $\ga=\ga^1\cup...\cup\ga^\alpha$ is a finite disjoint union of smooth closed compact embedded curves, a classical formulation of the Plateau's problem with datum $\ga$ may be to solve the minimization problem
\begin{equation}
\min\left\{ \mu_\vp(\Si) \,\,|\,\, \vp:\Si\to\R^3, \, \vp|_{\pa\Si}:\pa\Si\to\ga \mbox{ embedding} \right\},
\end{equation}
that is one wants to look for the surface of least area having the given boundary. From a physical point of view, solutions of the Plateau's problem are good models of soap elastic films having the given boundary (\cite{Morgan}). Critical points of the Plateau's problem are called minimal surfaces and they are characterized by having zero mean curvature (this is true also in the non-smooth context of varifolds in the appropriate sense, see \cite{SiGMT}). In particular, minimal surfaces or varifolds with vanishing mean curvature have zero Willmore energy. However, as we are going to discuss, the Plateau's problem, and more generally the minimization of the Area functional, may be incompatible with some constraints, such as a connectedness constraint.\\
\noindent In this paper we want to study the minimization of the Willmore energy of varifolds $V$ with given boundary conditions, i.e. both conditions of clamped or natural type on the generalized boundary $\si_V$, adding the constraint that the support of the varifold must connect the assigned curves $\ga^1,...,\ga^\alpha$. Hence the minimization problems we will study have the form
\begin{equation}\label{P}
	\cP\,:=\, \min\left\{ \,\cW(V) \quad|\quad V=\bv(M,\te_V):\quad
	\si_V=\si_0,\quad \supp V\cup \ga \,\, \mbox{ compact, connected } \right\},
\end{equation}
for some assigned vector valued Radon measure $\si_0$, or
\begin{equation}\label{Q}
	\cQ\,:=\, \min\left\{ \,\cW(V) \quad|\quad V=\bv(M,\te_V):\quad
	|\si_V|\le \mu,\quad \supp V\cup \ga \,\, \mbox{ compact, connected } \right\},
\end{equation}
for some assigned positive Radon measure $\mu$ with $\supp\mu=\ga$.\\
Let us introduce a remarkable particular case that motivates our study. Let $\sC=[0,1]^2/_\sim$ be a cylinder. Let $R\ge1$ and $h>0$. We define
\begin{equation*}
\Ga_{R,h}:=\big\{ x^2+y^2=1, z = h \big\}\cup \big\{ x^2+y^2=R^2, z = -h \big\}, \qquad R\ge1,\quad h>0,
\end{equation*}
that is a disjoint union of two parallel circles of possibly different radii. We consider the class of immersions
\begin{equation*}
\sF_{R,h}:=\left\{\vp:\sC\to\R^3\,|\,\vp \mbox{ smooth immersion},\,\,\vp|_{\pa \sC}:\pa \sC\to\Ga_{R,h} \mbox{ smooth embedding}\right\}.
\end{equation*}
By Corollary 3 in \cite{Sc83}, if a minimal surface has $\Ga_{R,h}$ as boundary, then it necessarily is a catenoid or a pair of planar disks. Moreover there exists a threshold value $h_0>0$ such that $\Ga_{R,h}$ is the boundary of a catenoid if and only if $h\le h_0$. For example, in the case of $R=1$ one has $h_0=\left(\min_{t>0} \frac{\cosh (t)}{t}\right)^{-1}$.
In particular for any $h>h_0$ there are no minimal surfaces (and thus no solutions of the Plateau's problem) connecting the two components of $\Ga_{R,h}$, even in a perturbative setting $h\simeq h_0+\ep$. This rigidity in the behavior of minimal surfaces suggests that in some cases an energy different from the Area functional may be a good model for connected soap films, like for describing the optimal elastic surface connecting $\Ga_{R,h}$ in the perturbative case $h\simeq h_0+\ep$. Since surfaces with zero Willmore energy recover critical points of the Plateau's problem, we expect the minimization of $\cW$ to be a good process for describing optimal elastic surfaces under constraints, like connectedness ones, that do not match with the Area functional.\\
Also, from the modeling point of view, we remark the importance of Willmore-type energies, like the Helfrich energy, in the physical study of biological membranes (\cite{ElFrHo}, \cite{SeFr}), and in the theory of elasticity in engineering (see \cite{GaGrSw} and references therein).\\
\noindent We have to mention some remarkable results about critical points of the Willmore energy (called Willmore surfaces) with boundary. Apart from the above cited \cite{ScBP}, Willmore surfaces with a boundary also of the form $\Ga_{R,h}$ have been studied together with the rotational symmetry of the surface in \cite{BeDaFr}, \cite{DaDeGr}, \cite{DaFrGrSc}, \cite{DeGr}, and \cite{Ei}; a new result about symmetry breaking is \cite{Mandel}. Also, interesting results about Willmore surfaces in a free boundary setting is contained in \cite{AlKu}. A relation between Willmore surfaces and minimal surfaces is investigated in \cite{BeJa}.

\medskip

\subsection{Main results} Let us collect here the main results of the paper. If $\ga=\ga^1\cup...\cup\ga^\alpha$ is a disjoint union of smooth embedded compact $1$-dimensional manifolds, we give a sufficient condition guaranteeing existence in minimization problems of the form \eqref{P} or \eqref{Q}. We obtain the following two Existence Theorems.

\begin{reptheorem}{thm:Existence4pinu}
	Let $\ga=\ga^1\cup...\cup\ga^\alpha$ be a disjoint union of smooth embedded compact $1$-dimensional manifolds with $\alpha\in \N_{\ge2}$.\\
	Let
	\[
	\si_0=\nu_0\, m \,\cH^1\res \gamma
	\]
	be a vector valued Radon measure, where $m:\ga\to\N_{\ge1}$ and $\nu_0:\ga\to (T\ga)^\perp$ are $\cH^1$-measurable functions with $m\in L^\infty(\cH^1\res\ga)$ and $|\nu_0|=1$ $\cH^1$-ae.\\
	Let $\cP$ be the minimization problem
	\begin{equation}
	\cP\,:=\, \min\left\{ \,\cW(V) \quad|\quad V=\bv(M,\te_V):\quad
	\si_V=\si_0,\quad \supp V\cup \ga \,\, \mbox{ compact, connected } \right\}.
	\end{equation}
	If $\inf\cP<4\pi$, then $\cP$ has minimizers.
\end{reptheorem}

\begin{reptheorem}{thm:Existence4pi}
	Let $\ga=\ga^1\cup...\cup\ga^\alpha$ be a disjoint union of smooth embedded compact $1$-dimensional manifolds with $\alpha\in \N_{\ge2}$.\\
	Let $m:\ga\to\N_{\ge1}$ by $\cH^1$-measurable with $m\in L^\infty(\cH^1\res\ga)$.\\
	Let $\cQ$ be the minimization problem
	\begin{equation}
	\cQ\,:=\, \min\left\{ \,\cW(V) \quad|\quad V=\bv(M,\te_V):\quad
	|\si_V|\le m\cH^1\res\ga,\quad \supp V\cup \ga \,\, \mbox{ compact, connected } \right\}.
	\end{equation}
	If $\inf\cP<4\pi$, then $\cP$ has minimizers.
\end{reptheorem}

\noindent Both Existence Theorems are obtained by applying a direct method in the context of varifolds. In both cases the connectedness constraint passes to the limit by means of the following theorem, that relates varifolds convergence with convergence in Hausdorff distance of the supports of the varifolds.

\begin{reptheorem}{thm:HausdorffDistance}
	Let $V_n=\bv(M_n,\te_{V_n})\neq0$ be a sequence of curvature varifolds with boundary with uniformly bounded Willmore energy converging to $V=\bv(M,\te_V)\neq0$. Suppose that the $ M_n$'s are connected and uniformly bounded.\\
	Suppose that $\supp\si_{V_n}=\ga^1_n\cup...\cup \ga^\alpha_n$ where the $\ga^i_n$'s are disjoint compact embedded $1$-dimensional manifolds, $\bar{\ga}^1,..., \bar{\ga}^\beta$ with $\beta\le \alpha$ are disjoint compact embedded $1$-dimensional manifolds, and assume that $\ga^i_n\to\bar{\ga}^i$ in $d_\cH$ for $i=1,...,\beta$ and that $\cH^1(\ga^i_n)\to0$ for $i=\beta+1,...,\alpha$.\\
	Then $M_n\to M\cup \bar{\ga}^1\cup...\cup\bar{\ga}^\beta $ in Hausdorff distance $d_\cH$ (up to subsequence) and $M \cup \bar{\ga}^1\cup...\cup\bar{\ga}^\beta$ is connected. Moreover $\ga^i_n\to \{p_i\}$ in $d_\cH$ for any $i=\be+1,...,\alpha$ for some points $\{p_i\}$, each $p_i\in M$, and $\supp \si_V \con  \bar{\ga}^1\cup...\cup\bar{\ga}^\beta\cup \{p_{\be+1},...,\be_\alpha\}$.\\
\end{reptheorem}

\noindent The paper is organized as follows. In Section 2 we recall the monotonicity formula for curvature varifolds with boundary and its consequences on the structure of varifolds with bounded Willmore energy. Such properties are proved in Appendix B. In Section 3 we prove some properties of the Hausdorff distance and we prove Theorem \ref{thm:HausdorffDistance}. Section 4 is devoted to the proof of the Existence Theorems \ref{thm:Existence4pinu} and \ref{thm:Existence4pi}; we also describe remarkable cases in which such theorems apply, such as in the above discussed perturbative setting. Theorem \ref{thm:HausdorffDistance} and the monotonicity formula give us results also about the asymptotic behavior of connected varifolds with suitable boundedness assumptions; more precisely we prove that rescalings of a sequence of varifolds $V_n$ with $\diam(\supp V_n)\to\infty$ converge to a sphere both as varifolds and in Hausdorff distance (Corollary \ref{cor:ConvergenceSphere}). Finally in Section 6 we apply all the previous results to the motivating case of varifolds with boundary conditions on curves of the type of $\Ga_{R,h}$. We prove that for any $R$ and $h$ the minimization problem of type $\cQ$ has minimizers and their rescalings asymptotically approach a sphere (Corollary \ref{cor:Ex/AsympDoubleCirc}). Appendix A recalls the definitions about curvature varifolds with boundary and a useful compactness theorem.

\medskip

\subsection{Notation}

We adopt the following notation.
\begin{itemize}
	\item The symbol $B_r(p)$ denotes the open ball of radius $r$ and center $p$ in $\R^3$.
	\item The symbol $\lgl\cdot,\cdot\rgl$ denotes the Euclidean inner product.
	\item The symbol $\cH^k$ denotes the $k$-dimensional Hausdorff measure in $\R^3$.
	\item The symbol $d_\cH$ denotes the Hausdorff distance.
	\item If $\vp:\Si\to\R^3$ is a smooth immersion of a $2$-dimensional manifold with boundary, then in local coordinates we denote by $\sff_{ij}$ the second fundamental form, by $\vec{H}$ the mean curvature vector, by $g_{ij}$ the metric tensor, by $g^{ij}$ its inverse, by $\mu_\vp$ the volume measure on $\Si$ induced by $\vp$, and by $co_\vp$ the conormal field.
	\item If $v$ is a vector and $M$ is $2$-rectifiable in $\R^3$, the symbol $(v)^\perp$ denotes the projection of $v$ onto $T_pM^\perp$; hence $v^\perp$ is defined $\cH^2$-ae on $M$ and it implicitly depends on the point $p\in M$.
	\item The symbol $V=\bv(M,\te_V)$ denotes an integer rectifiable varifold. Also $\mu_V=\te_V\cH^2\res M$ is the weight measure. If they exist, the generalized mean curvature and boundary are usually denoted by $\vec{H}$ (or $\vec{H}_V$) and $\si_V$.
	\item The symbol $\sC$ denotes a fixed cylinder, i.e. $\sC=[0,1]^2/_\sim$.
	\item For given $R\ge1$ and $h>0$, the symbol $\Ga_{R,h}$ denotes an embedded $1$-dimensional manifold of the form
	\begin{equation*}
	\Ga_{R,h}:=\big\{ x^2+y^2=1, z = h \big\}\cup \big\{ x^2+y^2=R^2, z = -h \big\}, \qquad R\ge1,\quad h>0,
	\end{equation*}
	that is a disjoint union of two parallel circles of possibly different radii. Observe that the distance between the two circles is equal to $2h$.
	\item For a given boundary datum $\Ga_{R,h}$ as above, we define the class
	\begin{equation*}
	\sF_{R,h}:=\left\{\vp:\sC\to\R^3\,|\,\vp \mbox{ smooth immersion},\,\,\vp|_{\pa \sC}:\pa \sC\to\Ga_{R,h} \mbox{ smooth embedding}\right\}.
	\end{equation*}
\end{itemize}


\bigskip

\section{Monotonicity formula and its consequences}

\noindent Here we recall the fundamental monotonicity formula for curvature varifolds with boundary, together with some immediate consequences on surfaces and on the structure of varifolds with finite Willmore energy.\\
This classical formula is completely analogous to its version without boundary (\cite{SiEX}, \cite{KuSc}), hence the technicality behind the results we are going to state is developed in Appendix B.\\

\noindent Let $0<\si<\ro$ and $p_0\in\R^3$. If $V$ is an integer rectifiable curvature varifold with boundary with bounded Willmore energy (here the support of $V$ is not necessarily bounded), with $\mu_V$ the induced measure in $\R^3$, and generalized boundary $\si_V$, it holds that
\begin{equation} \label{monot}
A(\si)+\int_{B_\ro(p_0)\sm B_\si (p_0) } \bigg| \frac{\vec{H}}{2} +\frac{(p-p_0)^\perp}{|p-p_0|^2} \bigg|^2\,d\mu_V(p) = A(\ro),
\end{equation}
where
\begin{equation}
A(\ro):= \frac{\mu_V(B_\ro(p_0))}{\ro^2}+\frac{1}{4}\int_{B_\ro(p_0)} |H|^2\,d\mu_V(p)+ R_{p_0,\ro},
\end{equation}
and
\begin{equation}
\begin{split}
R_{p_0,\ro}&:= \int_{B_\ro(p_0)} \frac{\lgl \vec{H}, p-p_0\rgl}{\ro^2}\,d\mu_V(p) + \frac{1}{2}\int_{B_\ro(p_0)} \bigg( \frac{1}{|p-p_0|^2}-\frac{1}{\ro^2} \bigg)(p-p_0) \,d\si_V(p)\\& =: \int_{B_\ro(p_0)} \frac{\lgl \vec{H}, p-p_0\rgl}{\ro^2}\,d\mu_V(p) + T_{p_0,\ro}.
\end{split}
\end{equation}
In particular the function $\ro\mapsto A(\ro)$ is non-decreasing.

\noindent When more than a varifold is involved, we will usually denote by $A_V(\cdot)$ the monotone quantity associated to $V$ for chosen $p_0\in\R^3$.\\

\noindent It is useful to remember that $T_{p_0,\ro}=0$ if $B_\ro(p_0)\cap \supp\si_V=\epty$, and that

\begin{equation}\label{eq10}
	\left| \int_{B_\ro(p_0)} \frac{\lgl \vec{H}, p-p_0\rgl}{\ro^2}\,d\mu_V(p) \right|\xrightarrow[\ro\to0]{}0
\end{equation}
whenever $\cW(V)<+\infty$ and $p_0\not\in\supp\si_V$ (see \eqref{eq8} in Appendix B).\\

\noindent Let us list some immediate consequences on surfaces with boundary.

\begin{lemma}\label{lem:MonotonicityEstimates}
	Let $\Si\con\R^3$ be a compact connected immersed surface with boundary. Then
	\begin{equation} \label{1}
	\forall p_0\in\R^3:\qquad4\lim_{\si\searrow0} \frac{|\Si\cap B_\si(p_0)|}{\si^2} + 4 \int_{\Si} \bigg| \frac{\vec{H}}{2} +\frac{(p-p_0)^\perp}{|p-p_0|^2} \bigg|^2 = \cW(\Si) + 2 \int_{\pa\Si} \bigg\lgl \frac{p-p_0}{|p-p_0|^2}, co \bigg\rgl.
	\end{equation}
	In particular
	\begin{equation} \label{3}
	\forall p_0\in\R^3\sm\pa\Si:\qquad4\lim_{\si\searrow0} \frac{|\Si\cap B_\si(p_0)|}{\si^2} + 4 \int_{\Si} \bigg| \frac{\vec{H}}{2} +\frac{(p-p_0)^\perp}{|p-p_0|^2} \bigg|^2 \le  \cW(\Si) + 2 \frac{\cH^1(\pa\Si)}{d(p_0,\pa\Si)}.
	\end{equation}
	Moreover calling $d_\cH$ the Hausdorff distance (see Section \ref{SHaus}) and writing $d_\cH (\Si,\pa\Si)= d(\overline{p_0},\pa\Si)$ for some $\overline{p_0}\in\Si\sm\pa\Si$, it holds that
	\begin{equation} \label{2}
	\qquad4\lim_{\si\searrow0} \frac{|\Si\cap B_\si(\overline{p_0})|}{\si^2} + 4 \int_{\Si} \bigg| \frac{\vec{H}}{2} +\frac{(p-\overline{p_0})^\perp}{|p-\overline{p_0}|^2} \bigg|^2 \le  \cW(\Si) + 2 \frac{\cH^1(\pa\Si)}{d_\cH(\Si,\pa\Si)}.
	\end{equation}
\end{lemma}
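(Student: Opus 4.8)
The plan is to deduce all three formulas from the monotonicity identity \eqref{monot}, applied to the integer rectifiable curvature varifold $V=\bv(\Sigma,1)$ canonically associated to $\Sigma$, by letting $\ro\to+\infty$ and $\si\searrow0$ (with $p_0$, resp.\ $\overline{p_0}$, the point appearing in each statement). This $V$ has $\mu_V=\cH^2\res\Sigma$, Willmore energy $\cW(V)=\cW(\Sigma)<+\infty$ (the mean curvature vector being continuous on the compact $\Sigma$), and generalized boundary $\si_V=co\,\cH^1\res\pa\Sigma$ by the divergence theorem on manifolds with boundary; in particular $\supp\si_V=\pa\Sigma$ and $T_{p_0,\ro}=\tfrac12\int_{\pa\Sigma\cap B_\ro(p_0)}\big(|p-p_0|^{-2}-\ro^{-2}\big)\lgl p-p_0,co\rgl\,d\cH^1$.

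First I would send $\ro\to+\infty$ in \eqref{monot} with $\si>0$ fixed. Choosing $\ro$ large enough that $\Sigma\con B_\ro(p_0)$, every $B_\ro(p_0)$-integral becomes an integral over $\Sigma$ or $\pa\Sigma$, and: $\mu_V(B_\ro(p_0))/\ro^2=\cH^2(\Sigma)/\ro^2\to0$; both $\ro^{-2}\int_\Sigma\lgl\vec H,p-p_0\rgl\,d\mu_V$ and $\ro^{-2}\int_{\pa\Sigma}\lgl p-p_0,co\rgl\,d\cH^1$ tend to $0$ since their integrands are bounded on the compact set $\Sigma$; and $T_{p_0,\ro}\to\tfrac12\int_{\pa\Sigma}\lgl(p-p_0)/|p-p_0|^2,co\rgl\,d\cH^1$. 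This last convergence is the delicate point: even when $p_0\in\pa\Sigma$, the integrand $p\mapsto\lgl(p-p_0)/|p-p_0|^2,co(p)\rgl$ is bounded on $\pa\Sigma$, because parametrizing $\pa\Sigma$ near $p_0=\ga(0)$ by arclength $\ga$ and using $co\perp T\pa\Sigma$ gives $\lgl\ga(s)-\ga(0),co(\ga(s))\rgl=s\,\lgl\ga'(0),co(\ga(0))\rgl+O(s^2)=O(s^2)$ while $|\ga(s)-\ga(0)|\ge\tfrac12|s|$ for $|s|$ small; since moreover $0<|p-p_0|^{-2}-\ro^{-2}<|p-p_0|^{-2}$ on $B_\ro(p_0)$, dominated convergence applies and the limiting integral is absolutely convergent. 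Hence $A(\ro)\to\La:=\tfrac14\cW(\Sigma)+\tfrac12\int_{\pa\Sigma}\lgl(p-p_0)/|p-p_0|^2,co\rgl\,d\cH^1$, and, the left side of \eqref{monot} being constant in $\ro$ for $\ro$ large,
\[
A(\si)+\int_{\Sigma\sm B_\si(p_0)}\Big|\tfrac{\vec H}{2}+\tfrac{(p-p_0)^\perp}{|p-p_0|^2}\Big|^2\,d\mu_V=\La\qquad\text{for every }\si>0.
\]

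Next I would let $\si\searrow0$. By monotone convergence the integral above increases to $\int_\Sigma\big|\tfrac{\vec H}{2}+\tfrac{(p-p_0)^\perp}{|p-p_0|^2}\big|^2\,d\mu_V$, whose finiteness is then forced by the identity itself once the $A(\si)$-terms are controlled. Among those, $\tfrac14\int_{B_\si(p_0)}|H|^2\,d\mu_V\to0$ since $|H|^2\mu_V$ has no atom, and $T_{p_0,\si}\to0$ (identically zero for small $\si$ if $p_0\notin\pa\Sigma$, and otherwise bounded by a constant times $\cH^1(\pa\Sigma\cap B_\si(p_0))\to0$, using that its integrand is bounded by the previous step); finally $\int_{B_\si(p_0)}\si^{-2}\lgl\vec H,p-p_0\rgl\,d\mu_V\to0$, which is \eqref{eq10} when $p_0\notin\pa\Sigma$, and in general follows from $\big|\int_{B_\si(p_0)}\si^{-2}\lgl\vec H,p-p_0\rgl\,d\mu_V\big|\le\si^{-1}\mu_V(B_\si(p_0))^{1/2}\big(\int_{B_\si(p_0)}|\vec H|^2\,d\mu_V\big)^{1/2}$ together with the elementary area bound $\mu_V(B_\si(p_0))\le C\si^2$. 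Therefore $\lim_{\si\searrow0}A(\si)=\lim_{\si\searrow0}|\Sigma\cap B_\si(p_0)|/\si^2$ (in particular this limit exists), and substituting into the displayed identity and multiplying by $4$ yields \eqref{1}.

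Finally, \eqref{3} follows from \eqref{1} and $|co|\equiv1$ via $2\big|\int_{\pa\Sigma}\lgl(p-p_0)/|p-p_0|^2,co\rgl\big|\le2\int_{\pa\Sigma}|p-p_0|^{-1}\,d\cH^1\le2\,\cH^1(\pa\Sigma)/d(p_0,\pa\Sigma)$, valid for $p_0\notin\pa\Sigma$; and \eqref{2} is \eqref{3} applied at a point $\overline{p_0}$ realizing $d_\cH(\Sigma,\pa\Sigma)=\sup_{p\in\Sigma}d(p,\pa\Sigma)=d(\overline{p_0},\pa\Sigma)$, which exists by compactness of $\Sigma$ and lies in $\Sigma\sm\pa\Sigma$ because $d_\cH(\Sigma,\pa\Sigma)>0$. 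I expect the main obstacle to be exactly the boundary term in the step $\ro\to+\infty$ when $p_0\in\pa\Sigma$: a priori one only has the non-integrable estimate $|\lgl(p-p_0)/|p-p_0|^2,co\rgl|\le|p-p_0|^{-1}$, so the geometric cancellation $\lgl p-p_0,co(p)\rgl=O(|p-p_0|^2)$ is what makes $\La$ finite and legitimizes passing to the limit; the rest is routine manipulation of \eqref{monot}.
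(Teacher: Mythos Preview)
Your proof is correct and follows essentially the same route as the paper: apply the monotonicity formula \eqref{monot} to the varifold $\bv(\Sigma,1)$, send $\rho\to\infty$ using compactness, send $\sigma\searrow0$ using smoothness, and read off \eqref{1}; then \eqref{3} and \eqref{2} are immediate consequences. Your treatment is in fact more detailed than the paper's---in particular your justification of the key estimate $\lgl p-p_0,co(p)\rgl=O(|p-p_0|^2)$ via the orthogonality $co\perp T\pa\Sigma$ is exactly the content of the paper's one-line ``Since $\Sigma$ is smooth\ldots $O_{p_0}(|p-p_0|^2)$'', and your explicit handling of the remaining $A(\sigma)$-terms as $\sigma\searrow0$ spells out what the paper leaves implicit.
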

\begin{proof}
	It suffices to prove \eqref{1}. Since $\Si$ is smooth we have that
	\[
	\left| \int_{B_\ro(p_0)} \bigg( \frac{1}{|p-p_0|^2}-\frac{1}{\ro^2} \bigg)\lgl p-p_0 , co \rgl \,d \cH^1(p) \right| \le \int_{B_\ro(p_0)} \left| \frac{1}{|p-p_0|^2}-\frac{1}{\ro^2} \right| O_{p_0}(|p-p_0|^2) \, d\cH^1(p) \xrightarrow[\ro\to0]{}0.
	\]
	Since $\Si$ is smooth, by \eqref{monot} we have that
	\[
	A(\si)\xrightarrow[\si\to0]{}\lim_{\si\searrow0} \frac{|\Si\cap B_\si(p_0)|}{\si^2},\]
	while by compactness it holds that
	\[
	A(\ro)\xrightarrow[\ro\to\infty]{}\frac14\cW(\Si) + \frac12 \int_{\pa\Si} \big\lgl \frac{p-p_0}{|p-p_0|^2}, co \big\rgl,\]
	and we get \eqref{1}.
\end{proof}

\noindent Let us mention that \eqref{3} already appears in \cite{RiLI}.\\

\noindent More importantly, the monotonicity formula implies fundamental structural properties on varifolds with bounded Willmore energy. First we remark such results in the case of varifolds without boundary, as proved in \cite{KuSc}.

\begin{remark}\label{rem:Rappresentante}
	Let $V=\bv(M,\te_V)$ be an integer rectifiable varifold with $\si_V=0$ and finite Willmore energy. Then at any point $p_0\in\R^3$ there exists the limit
	\begin{equation}\label{eq1}
	\lim_{r\to0} \frac{\mu_V(B_r(p_0))}{\pi r^2}=\te_V(p_0),
	\end{equation}
	and $\te_V$ is upper semicontinuous on $\R^3$ (see (A.7) and (A.9) in \cite{KuSc}). In particular $M=\{p\in\R^3\,\,:\,\,\te_V(p)\ge \frac{1}{2}\}$ is closed.\\
	Recall that if $\supp V$ is also compact and non-empty, then $\cW(V)\ge4\pi$ ((A.19) in \cite{KuSc}) and $\te_V$ is uniformly bounded on $\R^3$ by a constant depending only on $\cW(V)$ ((A.16) in \cite{KuSc}).
\end{remark}

\noindent In complete analogy with Remark \ref{rem:Rappresentante} we prove in Appendix B (see Proposition \ref{prop:StructureProperties}) that if $V$ is a $2$-dimensional integer rectifiable curvature varifold with boundary, denoting by $S$ a compact $1$-dimensional embedded manifold containing the support $\supp\si_V$ with $|\si_V|(S)<+\infty$ and assuming that
	\[
	\cW(V)<+\infty, \quad \limsup_{R\to\infty} \frac{\mu_V(B_R(0))}{R^2}\le K<+\infty,
	\]
then the limit
	\[
	\lim_{\ro\searrow0} \frac{\mu_V(B_\ro(p))}{\ro^2}
	\]
exists at any point $p\in\R^3\sm S$, the multiplicity function $\te_V(p)=\lim_{\ro\searrow0} \frac{\mu_V(B_\ro(p))}{\ro^2}$ is upper semicontinuous on $\R^3\sm S$ and bounded by a constant $C(d(p,S), |\si_V|(S), K,\cW(V))$ depending only on the distance $d(p,S)$, $|\si_V|(S)$, $K$, and $\cW(V)$. Moreover $V=\bv(M,\te_V)$ where $M=\{p\in\R^3\sm S\,|\, \te_v(p)\ge\frac12\}\cup S$ is closed.\\
%

\noindent Whenever a varifold $\bv(M,\te_V)$ satisfies the above assumptions, we will always assume that $M=\{p\in\R^3\sm S\,|\, \te_v(p)\ge\frac12\}\cup S$.\\
These structural properties on curvature varifolds with finite Willmore energy, together with the analogous properties recalled in Remark \ref{rem:Rappresentante}, should be always kept in mind in what follows.

\bigskip

\section{Convergence in the Hausdorff distance}\label{SHaus}

\noindent The convergence of sets with respect to the Hausdorff distance will play an important role in our study. For every sets $X,Y\con\R^3$ we define the Hausdorff distance $d_\cH$ between $X$ and $Y$ by
\begin{equation}
	d_\cH(X,Y):=\inf \left\{\ep>0\,|\,X\con\cN_\ep(Y),\,Y\con\cN_\ep(X)\right\}=\max\left\{\sup_{x\in X} \inf_{y\in Y} |x-y|,\,\sup_{y\in Y} \inf_{x\in X} |x-y|\right\}.
\end{equation}


\noindent We say that a sequence of sets $X_n$ converges to a set $X$ in $d_\cH$ if $\lim_n d_\cH(X_n,X)=0$.\\
Now we prove some useful properties of the Hausdorff distance.

\begin{lemma}\label{lem:HausdorffConnectedness}
	Suppose that $X_n\to X$ in $d_\cH$. Then:\\
	i) $X_n\to\overline{X}$ in $d_\cH$.\\
	ii) If $X_n$ is connected for any sufficiently large $n$ and $X$ is bounded, then $\overline{X}$ is connected as well.
\end{lemma}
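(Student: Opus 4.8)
The plan is to derive both claims directly from the $\cN_\ep$-characterisation of the Hausdorff distance recalled in the definition above. For i), the key observation is that $d_\cH(X,\overline X)=0$: for every $\ep>0$ one has $X\con\overline X\con\cN_\ep(X)$ (every point of $\overline X$ has zero distance to $X$) and trivially $X\con\cN_\ep(\overline X)$, so $d_\cH(X,\overline X)\le\ep$ for all $\ep>0$. Since $d_\cH$ satisfies the triangle inequality — which is immediate from the neighbourhood description, because $Y\con\cN_a(Z)$ and $Z\con\cN_b(W)$ force $Y\con\cN_{a+b}(W)$ — we conclude
\[
d_\cH(X_n,\overline X)\le d_\cH(X_n,X)+d_\cH(X,\overline X)=d_\cH(X_n,X)\longrightarrow 0 .
\]

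For ii), by i) I would replace $X$ with $\overline X$ and assume from the start that $X$ is closed; being also bounded, $X$ is compact. Argue by contradiction: suppose $X=A\cup B$ with $A,B$ nonempty, disjoint and closed in $X$, hence compact, so that $\de:=\dist(A,B)>0$. Then $U:=\cN_{\de/3}(A)$ and $V:=\cN_{\de/3}(B)$ are \emph{disjoint} open sets (a common point would lie within $\de/3$ of both $A$ and $B$, forcing $\dist(A,B)<2\de/3$) with $\cN_{\de/3}(X)\con U\cup V$. For $n$ large we have $d_\cH(X_n,X)<\de/3$, hence $X_n\con\cN_{\de/3}(X)\con U\cup V$; since $X_n$ is connected and $U,V$ are disjoint open sets, $X_n$ is contained entirely in $U$ or entirely in $V$. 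Passing to a subsequence and relabelling $A,B$ if necessary, we may assume $X_n\con U=\cN_{\de/3}(A)$ for all large $n$. On the other hand $X\con\cN_{\de/3}(X_n)$, so a fixed point $b\in B\con X$ satisfies $|b-x_n|<\de/3$ for some $x_n\in X_n$, i.e. $x_n\in V$, contradicting $X_n\con U$ and $U\cap V=\epty$. Hence $X=\overline X$ is connected.

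There is no serious difficulty here; the two points that deserve attention are the passage to a subsequence in ii) — for different indices the connected set $X_n$ could a priori lie in $U$ for some $n$ and in $V$ for others, so one first extracts a subsequence staying on one side — and the essential role of the boundedness hypothesis, which is exactly what makes the pieces $A,B$ of a hypothetical disconnection compact and therefore at positive distance; without it the statement fails, since two disjoint closed unbounded sets may be at zero distance.
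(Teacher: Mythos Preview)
Your proof is correct and follows essentially the same route as the paper: part i) is the observation that $d_\cH(X,\overline X)=0$ (the paper phrases it as $\overline X\con\cN_\ep(X_n)$ whenever $X\con\cN_{\ep/2}(X_n)$), and part ii) is the same compactness--positive--distance argument with disjoint $\ep$-neighbourhoods of the two pieces. The only cosmetic difference is that the paper concludes directly that both $\cN_\ep(A)\cap X_n$ and $\cN_\ep(B)\cap X_n$ are nonempty (hence $X_n$ disconnected), whereas you pass to a subsequence lying on one side and then contradict the inclusion $X\con\cN_{\de/3}(X_n)$; your subsequence step is in fact unnecessary, since the contradiction works for each large $n$ individually.
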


\begin{proof}
	i) Just note that if $X\con \cN_{\frac\ep2}(X_n)$, then $\overline{X}\con \cN_\ep(X_n)$.\\
	ii) By $i)$ we can assume without loss of generality that $X$ is closed, and thus compact. Suppose by contradiction that there exist two closed sets $A,B\con X$ such that $A\cap B=\epty$, $A\neq\epty$, $B\neq\epty$, and $A\cup B=X$. Since $X$ is compact, $A$ and $B$ are compact as well, and thus $d(A,B):=\inf_{x\in A,y\in B}|x-y|=\ep>0$. By assumption, for any $n\ge n(\frac\ep4)$ we have that $X_n\con \cN_{\frac\ep4}(X)=\cN_{\frac\ep4}(A)\cup\cN_{\frac\ep4}(B)$ and $\cN_{\frac\ep4}(A)\cap \cN_{\frac\ep4}(B)=\epty$. The sets $\cN_{\frac\ep4}(A)\cap X_n$ and $\cN_{\frac\ep4}(B)\cap X_n$ are disjoint and definitively non-empty, and open in $X_n$. This implies that $X_n$ is not connected for $n$ large enough, that gives a contradiction.
\end{proof}

\begin{lemma}\label{lem:EquivalenceHausdorff}
	Suppose $X_n$ is a sequence of uniformly bounded closed sets in $\R^3$ and let $X\con\R^3$ be closed. Then $X_n\to X$ in $d_\cH$ if and only if the following two properties hold:\\
	a) for any subsequence of points $y_{n_k}\in X_{n_k}$ such that $y_{n_k}\xrightarrow[k]{}y$, we have that $y\in X$,\\
	b) for any $x\in X$ there exists a sequence $y_n\in X_n$ converging to $x$.
\end{lemma}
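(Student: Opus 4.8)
The statement is the standard characterization of Hausdorff convergence for uniformly bounded closed sets in terms of Kuratowski-type upper and lower limits, so the proof is a direct unwinding of the definitions; the only mild subtlety is exploiting the uniform boundedness to pass to convergent subsequences. The plan is to prove the two implications separately.

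\emph{($\Rightarrow$).} Assume $d_\cH(X_n,X)\to 0$. For (a), let $y_{n_k}\in X_{n_k}$ with $y_{n_k}\to y$. Since $X_{n_k}\con \cN_{\ep_k}(X)$ with $\ep_k:=d_\cH(X_{n_k},X)\to 0$, for each $k$ there is $x_k\in X$ with $|y_{n_k}-x_k|\le \ep_k + 1/k$; hence $x_k\to y$, and since $X$ is closed we get $y\in X$. For (b), fix $x\in X$. Since $X\con \cN_{\ep_n}(X_n)$ with $\ep_n:=d_\cH(X_n,X)\to 0$, pick $y_n\in X_n$ with $|y_n-x|\le \ep_n+1/n\to 0$.

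\emph{($\Leftarrow$).} Assume (a) and (b); we must show $\sup_{x\in X}\dist(x,X_n)\to 0$ and $\sup_{y\in X_n}\dist(y,X)\to 0$. For the first quantity, suppose not: then (after passing to a subsequence, relabelled) there exist $\de>0$ and $x_n\in X$ with $\dist(x_n,X_n)\ge \de$. Since the $X_n$ are contained in a fixed ball, and $X$ — being the limit candidate — may be assumed bounded (if $X$ were unbounded, (a) applied to any sequence $y_n\in X_n$ together with uniform boundedness would force $X$ to lie in the closed fixed ball, contradiction; more precisely any point of $X$ is a limit of points of $X_n$ by (b), hence lies in the fixed ball, so $X$ is bounded), we may extract $x_{n_k}\to x\in X$ (closedness of $X$). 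By (b) there is $y_n\in X_n$ with $y_n\to x$, so $\dist(x_{n_k},X_{n_k})\le |x_{n_k}-y_{n_k}|\to 0$, contradicting $\dist(x_{n_k},X_{n_k})\ge\de$. For the second quantity, suppose not: then along a subsequence there are $y_{n_k}\in X_{n_k}$ with $\dist(y_{n_k},X)\ge\de>0$. By uniform boundedness we may further extract so that $y_{n_k}\to y$; by (a), $y\in X$, whence $\dist(y_{n_k},X)\le |y_{n_k}-y|\to 0$, a contradiction. Combining the two, $d_\cH(X_n,X)\to0$.

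\emph{Main obstacle.} There is no real obstacle: the argument is a compactness-plus-definitions exercise. The only point requiring a little care is making sure that in the ($\Leftarrow$) direction one does not implicitly assume $X$ bounded as a hypothesis but instead derives its boundedness from (b) and the uniform bound on the $X_n$, so that the Bolzano–Weierstrass extractions used to contradict failure of convergence are legitimate.
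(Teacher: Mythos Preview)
Your proof is correct and follows essentially the same route as the paper's: both implications are handled by compactness and contradiction, and you correctly observe (as the paper does) that boundedness of $X$ in the $(\Leftarrow)$ direction must be derived from (b) rather than assumed. Your $(\Rightarrow)$ direction is slightly more direct than the paper's (you pick approximating points explicitly, whereas the paper argues (a) by contradiction and builds the sequence in (b) via a more elaborate diagonal-type construction), but this is cosmetic.
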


\begin{proof}
	Suppose first that $d_\cH(X_n,X)\to0$. If there exists a converging subsequence $y_{n_k}\in X_{n_k}$ with limit $y\notin X$, then $d(y_{n_k},X)\ge\ep_0>0$, and thus $X_{n_k}\not\con\cN_{\frac{\ep_0}{2}}(X)$ for $k$ large, that is impossible; so we have proved $a)$. Now let $x\in X$ be fixed. Consider a strictly decreasing sequence $\ep_m\searrow0$ . For any $\ep_m>0$ let $n_{\ep_m}$ be such that $X\con\cN_{\ep_m}(X_n)$ for any $n\ge n_{\ep_m}$. This means that $B_{\ep_m}(x)\cap X_n\neq\epty$ for any $n\ge n_{\ep_m}$ and any $m\in\N$. We can define the sequence
	\[
	n\mapsto x_n \in X_n \cap B_{\ep_{m_n}}(x),
	\]
	where
	\[
	m_n=\sup\left\{m\in \N\,|\,X_n\cap B_{\ep_m}(x)\neq\epty\right\},
	\]
	understanding that $x_n=x$ if $m_n=\infty$, in fact since $X_n$ is closed we have that $x\in X_m$ if $m_n=\infty$. The sequence $\ep_{m_n}$ converges to $0$ as $n\to\infty$, otherwise there exists $\eta>0$ such that $X_n\cap B_\eta(x)=\epty$ for any $n$ large, but this contradicts the convergence in $d_\cH$. Hence $x_n\to x$ and we have proved $b)$.\\
	Suppose now that $a)$ and $b)$ hold. If there is $\ep_0>0$ such that $X_n\not\con\cN_{\ep_0}(X)$ for $n$ large, then a subsequence $x_{n_k}$ converges to a point $y$ such that $d(y,X)\ge\ep_0>0$, that is impossible. If there is $\ep_0>0$ such that $X\not\con\cN_{\ep_0}(X_n)$ for $n$ large, then there is a sequence $z_n\in X$ such that $d(z_n,X_n)\ge\ep_0>0$. By $b)$ we have that $X$ is bounded, then a subsequence $z_{n_k}$ converges to $z\in X$, and $d(z,X_{n_k})\ge\frac{\ep_0}{2}$ definitely in $k$. But then $z$ is not the limit of any sequence $x_{n_k}\in X_{n_k}$. However $z$ is the limit of a sequence $\bar{x}_n\in X_n$ by $b)$, and thus it is the limit of the subsequence $\bar{x}_{n_k}$, and this gives a contradiction.
\end{proof}

\begin{cor}
	Let $X_n$ be a sequence of uniformly bounded closed sets. Suppose that $X_n\to X$ in $d_\cH$ and $X_n\to Y$ in $d_\cH$. If both $X$ and $Y$ are closed, then $X=Y$.
\end{cor}

\begin{proof}
	Both $X$ and $Y$ are bounded. We can apply Lemma \ref{lem:EquivalenceHausdorff}, that immediately implies that $X\con Y$ and $Y\con X$ using the characterization of convergence in $d_\cH$ given by points $a)$ and $b)$.
\end{proof}

\noindent The above properties allow us to relate the convergence in the sense of varifolds to the convergence of their supports in Hausdorff distance. 

\begin{thm}\label{thm:HausdorffDistance}
	Let $V_n=\bv(M_n,\te_{V_n})\neq0$ be a sequence of curvature varifolds with boundary with uniformly bounded Willmore energy converging to $V=\bv(M,\te_V)\neq0$. Suppose that the $ M_n$'s are connected and uniformly bounded.\\
	Suppose that $\supp\si_{V_n}=\ga^1_n\cup...\cup \ga^\alpha_n$ where the $\ga^i_n$'s are disjoint compact embedded $1$-dimensional manifolds, $\bar{\ga}^1,..., \bar{\ga}^\beta$ with $\beta\le \alpha$ are disjoint compact embedded $1$-dimensional manifolds, and assume that $\ga^i_n\to\bar{\ga}^i$ in $d_\cH$ for $i=1,...,\beta$ and that $\cH^1(\ga^i_n)\to0$ for $i=\beta+1,...,\alpha$.\\
	Then $M_n\to M\cup \bar{\ga}^1\cup...\cup\bar{\ga}^\beta $ in Hausdorff distance $d_\cH$ (up to subsequence) and $M \cup \bar{\ga}^1\cup...\cup\bar{\ga}^\beta$ is connected. Moreover $\ga^i_n\to \{p_i\}$ in $d_\cH$ for any $i=\be+1,...,\alpha$ for some points $\{p_i\}$, each $p_i\in M$, and $\supp \si_V \con  \bar{\ga}^1\cup...\cup\bar{\ga}^\beta\cup \{p_{\be+1},...,\be_\alpha\}$.\\
\end{thm}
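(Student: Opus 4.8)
The plan is to establish the Hausdorff convergence $M_n\to M\cup\bar\ga^1\cup\dots\cup\bar\ga^\beta$ (up to subsequence) by verifying the two characterizing properties (a) and (b) of Lemma \ref{lem:EquivalenceHausdorff}, then to deduce connectedness from Lemma \ref{lem:HausdorffConnectedness}, and finally to locate the limits of the vanishing boundary components $\ga^i_n$. Throughout, the key analytic input is the monotonicity formula \eqref{monot} together with the density lower bound it produces: at a point $p\in\supp V_n$ away from $\supp\si_{V_n}$, the quantity $A_{V_n}(\ro)$ is monotone and one has a quantitative lower bound $\mu_{V_n}(B_\ro(p))\ge c\,\ro^2$ for $\ro$ below the distance to $\supp\si_{V_n}$, with $c$ universal once the Willmore energy is bounded (this is the content of the estimates behind Remark \ref{rem:Rappresentante} and Proposition \ref{prop:StructureProperties}). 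Since $\sup_n\cW(V_n)<\infty$ and the $M_n$ are uniformly bounded, by compactness we may pass to a subsequence so that $M_n$ converges in $d_\cH$ to some closed set $\Sigma$; the goal is to identify $\Sigma$.

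First I would prove $\Sigma\supset M\cup\bar\ga^1\cup\dots\cup\bar\ga^\beta$, i.e. property (b) for these sets. For a point $x\in M=\supp\mu_V$, varifold convergence $\mu_{V_n}\rightharpoonup\mu_V$ forces $\mu_{V_n}(B_r(x))>0$ for $n$ large for every $r>0$, so one can extract $y_n\in M_n$ with $y_n\to x$; this gives $M\subset\Sigma$. For $x\in\bar\ga^i$ with $i\le\beta$, since $\ga^i_n\to\bar\ga^i$ in $d_\cH$ and $\ga^i_n\subset\supp\si_{V_n}\subset M_n$, property (b) for $\ga^i_n\to\bar\ga^i$ directly hands us the approximating sequence. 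Conversely, for property (a) — that any subsequential limit $y$ of points $y_{n_k}\in M_{n_k}$ lies in $M\cup\bar\ga^1\cup\dots\cup\bar\ga^\beta$ — I would argue by contradiction: if $y$ lies outside this closed set, fix $\rho_0>0$ with $B_{2\rho_0}(y)$ disjoint from $M$ and from all $\bar\ga^i$, $i\le\beta$. Because $\cH^1(\ga^i_n)\to 0$ for $i>\beta$, a Hausdorff-distance argument (using that each $\ga^i_n$ is connected, being a disjoint union of circles, hence has diameter $\le\cH^1(\ga^i_n)\to0$) shows that, after discarding $i$'s whose $\ga^i_n$ escape to infinity or converge into $M\cup\bigcup_{j\le\beta}\bar\ga^j$, the sets $\ga^i_n$ relevant near $y$ shrink to points; in particular for $k$ large, $B_{\rho_0}(y)\cap\supp\si_{V_{n_k}}=\emptyset$ unless $y$ is one of the limit points $p_i$ — but $p_i\in M$ as we show below, contradicting $y\notin M$. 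So $B_{\rho_0}(y)\cap\supp\si_{V_{n_k}}=\emptyset$, and the density lower bound from \eqref{monot} gives $\mu_{V_{n_k}}(B_{\rho_0}(y))\ge c\rho_0^2>0$; but then $\mu_V(\overline{B_{\rho_0}(y)})\ge\limsup_k\mu_{V_{n_k}}(B_{\rho_0}(y))\ge c\rho_0^2>0$ by the portmanteau/upper-semicontinuity property of weak-$*$ convergence on compact sets, contradicting $B_{2\rho_0}(y)\cap M=\emptyset$. This establishes $\Sigma=M\cup\bar\ga^1\cup\dots\cup\bar\ga^\beta$.

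Next, locating the $p_i$ for $i>\beta$: since $\diam\ga^i_n\le\cH^1(\ga^i_n)\to0$ and the $M_n$ are uniformly bounded, up to a further subsequence $\ga^i_n\to\{p_i\}$ in $d_\cH$ for some $p_i\in\Sigma$. I claim $p_i\in M$. If not, $p_i\in\bar\ga^j$ for some $j\le\beta$ (impossible for $n$ large since $\ga^i_n$ and $\ga^j_n$ are disjoint and $\ga^j_n\to\bar\ga^j\ni p_i$ would force them to collide) or $p_i\notin M\cup\bigcup_{j\le\beta}\bar\ga^j$ but then by the previous paragraph's argument the density lower bound at nearby points of $M_n$ outside a small neighborhood of $\ga^i_n$ still produces mass converging into $M$ near $p_i$ — more carefully, one uses that $\mu_{V_n}(B_r(p_i))\to\mu_V(B_r(p_i))$ for a.e. $r$ and shows this limit is positive for every $r>0$ by applying monotonicity on the annulus $B_r(p_i)\setminus B_{2\diam\ga^i_n}(p_i)$, which avoids $\supp\si_{V_n}$ — whence $p_i\in\supp\mu_V=M$. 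Finally, the inclusion $\supp\si_V\subset\bar\ga^1\cup\dots\cup\bar\ga^\beta\cup\{p_{\beta+1},\dots,p_\alpha\}$ follows because $\si_{V_n}\to\si_V$ as measures (the weak-$*$ limit of the boundary measures; this is part of the curvature-varifold compactness recalled in Appendix A) and $\supp\si_{V_n}=\bigcup_i\ga^i_n$ Hausdorff-converges into exactly this set. Connectedness of $\Sigma$ is then immediate from Lemma \ref{lem:HausdorffConnectedness}(ii), since each $M_n$ is connected and $\Sigma$ is bounded.

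I expect the main obstacle to be the rigorous treatment of the vanishing boundary components in property (a): one must rule out the scenario where $M_n$ has a "tentacle" anchored at a shrinking loop $\ga^i_n$ that drifts to a point $y\notin M$, carrying nontrivial but $\mu_V$-invisible mass there. The resolution is precisely the density lower bound: near such a $y$, for $k$ large there are points of $M_{n_k}$ at distance $\gg\diam\ga^i_{n_k}$ from $\supp\si_{V_{n_k}}$, so monotonicity applied on an annulus clean of boundary forces $\mu_{V_{n_k}}(B_{\rho_0}(y))\gtrsim\rho_0^2$, which survives in the limit and contradicts $y\notin M$. Making the bookkeeping uniform over the $i>\beta$ (finitely many, with diameters tending to $0$) and handling the borderline case $y=p_i$ by relegating it to the "$p_i\in M$" claim is the one place demanding care; everything else is a direct application of Lemmas \ref{lem:HausdorffConnectedness} and \ref{lem:EquivalenceHausdorff} and the structural consequences of \eqref{monot}.
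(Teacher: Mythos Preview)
Your overall architecture---reduce to Lemma \ref{lem:EquivalenceHausdorff}, verify (b) via weak-$*$ convergence and Hausdorff convergence of the boundary curves, verify (a) by contradiction using monotonicity, then invoke Lemma \ref{lem:HausdorffConnectedness}---matches the paper's. The gap is in your implementation of (a).

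You claim that monotonicity yields a uniform density lower bound $\mu_{V_{n_k}}(B_{\rho_0}(y))\ge c\,\rho_0^2$ with $c>0$ depending only on $\sup_n\cW(V_n)$. This is false in general. What \eqref{monot} actually gives at a point $p\in M_n$ with $B_\rho(p)\cap\supp\si_{V_n}=\emptyset$ is (after Young's inequality, exactly as in the paper's \eqref{eq9})
\[
\pi\le \frac{3}{2}\,\frac{\mu_{V_n}(B_\rho(p))}{\rho^2}+\frac{3}{4}\int_{B_\rho(p)}|\vec H_{V_n}|^2\,d\mu_{V_n}.
\]
If $\mu_{V_n}(B_\rho(p))\to 0$ this only yields $\pi\le\frac{3}{4}\limsup_n\cW(V_n)$, which is \emph{not} a contradiction once the uniform Willmore bound exceeds $\tfrac{4\pi}{3}$. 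Neither Remark \ref{rem:Rappresentante} nor Proposition \ref{prop:StructureProperties} provides the lower bound you invoke; they give upper semicontinuity and an \emph{upper} bound on density.

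The paper repairs exactly this step by using the connectedness of $M_n$ in a way you do not: since $M_n$ is connected and $M\neq\emptyset$, for any $N$ one finds points $z_{n,k}\in M_n\cap\pa B_{(1+k/N)\zeta/4}(y)$, $k=1,\dots,N-1$, with pairwise disjoint balls $B_{\zeta/8N}(z_{n,k})$ avoiding $\supp\si_{V_n}$. Applying the inequality above at each $z_{n,k}$ and summing, the Willmore terms are controlled by the \emph{total} energy (the balls are disjoint), giving $(N-1)\pi\le\frac{3}{4}\limsup_n\cW(V_n)$, a contradiction for $N$ large. Your single-point argument cannot produce this, and the same defect recurs in your proof that $p_i\in M$ (where the paper again exploits connectedness, picking $y_K\in M\cap\pa B_{1/K}(p_i)$ and letting $K\to\infty$). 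In short: you correctly isolated the ``tentacle'' obstruction but your proposed resolution---a single-ball density bound---does not close it; the missing idea is to spread the monotonicity estimate over many disjoint balls furnished by connectedness.
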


\begin{proof}
	Let us first observe that by the uniform boundedness of $M_n$, we get that $\ga^i_n$ converges to some compact set $X^i$ in $d_\cH$ up to subsequence for any $i=\be+1,...,\alpha$. Each $X_i$ is connected by Lemma \ref{lem:HausdorffConnectedness}, then by Golab Theorem we know that $\cH^1(X^i)\le \liminf_n \cH^1(\ga^i_n)=0$, hence $X^i=\{p_i\}$ for any $i=\be+1,...,\alpha$ for some points $p_{\be+1},...,p_\alpha$. Call $X=\{p_{\be+1},...,p_\alpha\}$.\\
	By assumption we know that $\mu_{V_n}\wsc\mu_V$ as measures on $\R^3$, also $M_n$ and $M$ can be taken to be closed. Moreover $\supp\si_V\con X\cup \bar{\ga}^1\cup...\cup\bar{\ga}^\beta$. In fact $V_n$ are definitely varifolds without generalized boundary on any open set of the form $\cN_\ep(X\cup \bar{\ga}^1\cup...\cup\bar{\ga}^\beta)$ and they converge as varifolds to $V$ on such an open set with equibounded Willmore energy.\\
	We want to prove that the sets $M_n$ and $ M \cup X\cup \bar{\ga}^1\cup...\cup\bar{\ga}^\beta$ satisfy points $a)$ and $b)$ of Lemma \ref{lem:EquivalenceHausdorff} and that $X\con M$.\\
	
	\noindent Let $x\in M\cup \bar{\ga}^1\cup...\cup\bar{\ga}^\beta\cup X$. If $x\in \bar{\ga}^1\cup...\cup\bar{\ga}^\beta\cup X$, then by assumption and Lemma \ref{lem:EquivalenceHausdorff} there is a sequence of points in $\supp\si_{V_n}$ converging to $x$. So let $x\in M\sm(\bar{\ga}^1\cup...\cup\bar{\ga}^\beta\cup X)$. We know that there exists the limit $\lim_{\ro\searrow0} \frac{\mu_V(B_\ro(x))}{\pi\ro^2}\ge1$, hence we can write that for any $\ro\in(0,\ro_0)$ with $\ro_0<d(x,\supp\si_V)$ we have that $\mu_V(B_\ro(x))\ge \frac\pi2\ro^2$. There exists a sequence $\ro_m\searrow0$ such that $\lim_n \mu_{V_n}(B_{\ro_m}(x))=\mu_V(B_{\ro_m}(x))$ for any $m$. Hence $M_n\cap B_{\ro_m}(x)\neq\epty$ for any $m$ definitely in $n$. Arguing as in Lemma \ref{lem:EquivalenceHausdorff} we find a sequence $x_n\in M_n$ converging to $x$, and thus the property $b)$ of Lemma \ref{lem:EquivalenceHausdorff} is achieved.\\

	\noindent For any $\ep>0$ let $A_\ep:=\cN_\ep(X \cup \bar{\ga}^1\cup...\cup\bar{\ga}^\beta)$. Let us show that for any $\ep>0$ it occurs that $M_n\sm A_\ep$ converges to $\left(M\cup X\cup  \bar{\ga}^1\cup...\cup\bar{\ga}^\beta\right)\sm A_\ep =M\sm A_\ep$ in $d_\cH$, i.e. we want to check property $a)$ of Lemma \ref{lem:EquivalenceHausdorff} for such sets.\\
	Once this convergence is established, we get that $M_n\to M \cup X\cup  \bar{\ga}^1\cup...\cup\bar{\ga}^\beta$ in $d_\cH$ and we can show that the whole thesis follows. In fact we have that for any $\ep>0$ for any $\eta>0$ it holds that
	\[
	M_n\sm  A_\ep\con \cN_{\eta} \left(M\cup X\cup \bar{\ga}^1\cup...\cup\bar{\ga}^\beta\sm  A_\ep\right), \qquad  \left( M \cup X\cup \bar{\ga}^1\cup...\cup\bar{\ga}^\beta\right)\sm A_\ep\con \cN_{\eta} (M_n\sm  A_\ep), 
	\]
	for any $n\ge n_{\ep,\eta}$. In particular
	\[
	M_n = M_n\sm A_\ep \cup A_\ep \con \cN_\eta(M\sm A_\ep) \cup A_\ep \con \cN_{\eta+2\ep} (M\cup X \cup \bar{\ga}^1\cup...\cup \bar{\ga}^\be),
	\]
	\[
	M \cup X \cup \bar{\ga}^1\cup...\cup \bar{\ga}^\be= \left(M \cup X \cup \bar{\ga}^1\cup...\cup \bar{\ga}^\be\right)\sm A_\ep \cup A_\ep \con \cN_\eta(M_n\sm A_\ep) \cup A_\ep \con \cN_{\eta+2\ep} (M_n),
	\]
	for any $n\ge n_{\ep,\eta}$. Setting $\ep=\eta$ we see that for any $\eta>0$ it holds that
	\[
	M_n \con \cN_{3\eta}\left(M\cup X\cup  \bar{\ga}^1\cup...\cup\bar{\ga}^\beta\right), \qquad \left(M\cup X \cup \bar{\ga}^1\cup...\cup\bar{\ga}^\beta\right) \con \cN_{3\eta}(M_n),
	\]
	for any $n\ge n_{2\eta,\eta}$. Hence $M_n\to M \cup X\cup  \bar{\ga}^1\cup...\cup\bar{\ga}^\beta$ in $d_\cH$. Therefore $M \cup X\cup  \bar{\ga}^1\cup...\cup\bar{\ga}^\beta$ is closed and connected. Moreover we get that $X\con M$, in fact for any $p_i\in X$ for any $K\in\N_{\ge1}$ by connectedness of $M_n$ we find some subsequence $y_{n_k}\in M_n \cap \pa B_{\frac1K}(p_i)$ converging to a point $y_K\in M\cap \pa B_{\frac1K}(p_i)$. Since $M$ is closed, passing to the limit $K\to\infty$ we see that $p_i\in M$. In particular $M_n\to M \cup  \bar{\ga}^1\cup...\cup\bar{\ga}^\beta$ in $d_\cH$ and the proof is completed.\\
	
	\noindent So we are left to prove that $M_n\sm A_\ep$ converges to $\left(M\cup X\cup  \bar{\ga}^1\cup...\cup\bar{\ga}^\beta\right)\sm A_\ep = M\sm A_\ep$ in $d_\cH$ for any fixed $\ep>0$. Consider any converging sequence $y_{n_k}\in M_{n_k}\sm A_\ep$. For simplicity, let us denote $y_n$ such sequence. Suppose by contradiction that $y_n\to y$ but $y\not\in M\cup A_\ep$. Since $M$ is closed, there exist $\ze>0$ such that $B_\ze(y)\cap M=\epty$ for $n$ large. Since $M_n$ is connected and $M\neq\epty$ we can write that $\pa B_\ze(y)\cap M_n\neq\epty$ for any $\si\in(\frac\ze4,\frac\ze2)$ for $n$ large enough. Since $y_n\not\in A_\ep$, up to choosing a smaller $\ze$ we can assume that $B_\ze(y)$ does not intersect $\supp\si_{V_n}$ for $n$ large. Fix $N\in\N$ with $N\ge2$ and consider points
	\[
	z_{n,k}\in \pa B_{\left(1+\frac{k}{N}\right)\frac\ze4}(y) \cap M_n \neq\epty,
	\]
	for any $k=1,...,N-1$.\\
	The open balls
	\[
	\left\{B_{\frac{1}{2N}\frac\ze4}(z_{n,k})\right\}_{k=1}^{N-1}
	\]
	are pairwise disjoint. Passing to the limit $\si\searrow0$, setting $\ro=\frac{\ze}{8N}$, and using Young's inequality in Equation \eqref{monot} evaluated on the varifold $V_n$ at the point $p_0=z_{n,k}$ we get that
	\begin{equation}\label{eq9}
	\begin{split}
		\pi &\le  \frac{\mu_{V_n}\left(B_{\frac{\ze}{8N}}(z_{n,k})\right)}{\left(\frac{\ze}{8N}\right)^2} + \frac14 \int_{B_{\frac{\ze}{8N}}(z_{n,k})} |\vec{H}_{V_n}|^2\,d\mu_{V_n} + \frac{1}{\left(\frac{\ze}{8N}\right)^2} \int_{B_{\frac{\ze}{8N}}(z_{n,k})} \lgl \vec{H}_{V_n} , p- z_{n,k} \rgl \, d\mu_{V_n}(p)
		\\ &\le \frac32\,\frac{\mu_{V_n}\left(B_{\frac{\ze}{8N}}(z_{n,k})\right)}{\left(\frac{\ze}{8N}\right)^2} + \frac34 \int_{B_{\frac{\ze}{8N}}(z_{n,k})} |\vec{H}_{V_n}|^2\,d\mu_{V_n},
	\end{split}
	\end{equation}
	for any $n$ large and any $k=1,...,N-1$. Since
	\[
	\limsup_n \mu_{V_n}\left(B_{\frac{\ze}{8N}}(z_{n,k})\right) \le \limsup_n \mu_{V_n}\left(\overline{B_{\frac{\ze}{2}}(y)}\right) \le \mu_V \left(B_{\frac{3}{4}\ze}(y)\right)=0,
	\]
	summing over $k=1,...,N-1$ in \eqref{eq9} and passing to the limit $n\to\infty$ we get that
	\[
	\pi(N-1)\le\limsup_n \frac34 \sum_{k=1}^{N-1} \int_{B_{\frac{\ze}{8N}}(z_{n_k})} |\vec{H}_{V_n}|^2\,d\mu_{V_n} \le \frac34\limsup_n \cW(V_n).
	\]
	Since $N$ can be chosen arbitrarily big from the beginning, we get a contradiction with the uniform bound on the Willmore energy of the $V_n$'s.\\
	\noindent Hence we have proved that $M_n\to M \cup \bar{\ga}^1\cup...\cup\bar{\ga}^\beta$ in $d_\cH$. By Lemma \ref{lem:HausdorffConnectedness} we get that $M \cup \bar{\ga}^1\cup...\cup\bar{\ga}^\beta$ is connected.	
\end{proof}

\begin{remark}\label{rem:Supporto}
	Arguing as in the second part of the proof of Theorem \ref{thm:HausdorffDistance}, we get the following useful statement.\\
	Assuming $V_n=\bv(M_n,\te_{V_n})\neq0$ is a sequence of curvature varifolds with boundary with uniformly bounded Willmore energy converging to $V=\bv(M,\te_V)\neq0$. Suppose that the $M_n$'s are connected and closed and that $M$ is closed. Suppose that $\supp\si_{V_n}$ is as in Theorem \ref{thm:HausdorffDistance}. If a subsequence $y_{n_k}\in M_{n_k}$ converges to $y$, then $y\in M  \cup \bar{\ga}^1\cup...\cup\bar{\ga}^\beta$.\\
	Observe that the supports $M_n, M$ are not necessarily bounded here.
\end{remark}

\begin{remark}
	The connectedness assumption in Theorem \ref{thm:HausdorffDistance} is essential. Consider in fact the following example: let $M_n=\pa B_1(0) \cup \pa B_{\frac1n(0)}$ and $\te_{V_n}(p)=1$ for any $p\in M_n$. Hence the varifolds $\bv(M_n,\te_{V_n})$ converge to $\bv(\pa B_1(0),1)$ as varifolds and they have uniformly bounded energy equal to $8\pi$, but clearly $M_n$ does not converge to $\pa B_1(0)$ in $d_{\cH}$.
\end{remark}

\begin{remark}\label{rem:HausConvergence2}
	The statement of Theorem \ref{thm:HausdorffDistance} also holds if we assume $\supp\si_{V_n}\con \ga^1_n\cup...\cup\ga^\alpha_n$ and $M_n \cup \ga^1_n\cup...\cup\ga^\alpha_n$ connected. In this case, using the notation of the proof of Theorem \ref{thm:HausdorffDistance}, we have that $M_n \cup \ga^1_n\cup...\cup\ga^\alpha_n$ converges to $M \cup X \cup \bar{\ga}^1\cup...\cup\bar{\ga}^\be$ in $d_\cH$ and $M \cup X \cup \bar{\ga}^1\cup...\cup\bar{\ga}^\be$ is connected.
\end{remark}

\bigskip

\section{Perturbative regime: existence in the class of varifolds}

\noindent Now we want to prove the two main Existence Theorems about boundary valued minimization problems on connected varifolds.

\begin{thm}\label{thm:Existence4pinu}
	Let $\ga=\ga^1\cup...\cup\ga^\alpha$ be a disjoint union of smooth embedded compact $1$-dimensional manifolds with $\alpha\in \N_{\ge2}$.\\
	Let
	\[
	\si_0=\nu_0\, m \,\cH^1\res \gamma
	\]
	be a vector valued Radon measure, where $m:\ga\to\N_{\ge1}$ and $\nu_0:\ga\to (T\ga)^\perp$ are $\cH^1$-measurable functions with $m\in L^\infty(\cH^1\res\ga)$ and $|\nu_0|=1$ $\cH^1$-ae.\\
	Let $\cP$ be the minimization problem
	\begin{equation}
	\cP\,:=\, \min\left\{ \,\cW(V) \quad|\quad V=\bv(M,\te_V):\quad
	\si_V=\si_0,\quad \supp V\cup \ga \,\, \mbox{ compact, connected } \right\}.
	\end{equation}
	If $\inf\cP<4\pi$, then $\cP$ has minimizers.
\end{thm}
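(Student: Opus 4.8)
The plan is to run a direct method in the class of integer rectifiable curvature varifolds with boundary. The two crucial ingredients are the monotonicity formula \eqref{monot} (to control the size of a minimizing sequence, where the hypothesis $\inf\cP<4\pi$ enters decisively) and Theorem \ref{thm:HausdorffDistance}, which lets the connectedness constraint pass to the limit. So I would fix a minimizing sequence $V_n=\bv(M_n,\te_{V_n})$ for $\cP$, with $M_n=\supp V_n$ compact, $M_n\cup\ga$ connected, $\si_{V_n}=\si_0$, and $\cW(V_n)\to\inf\cP$; after discarding finitely many terms I may assume $\cW(V_n)\le4\pi-\de$ for a fixed $\de>0$. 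Each $V_n\neq0$ (as $\si_0\neq0$, since $m\ge1$). \textbf{The first step, and the one I expect to be delicate, is the uniform size bound.} Fix $p_0\in M_n\sm\ga$; then $p_0\notin\supp\si_{V_n}$, and by the structure results for finite-Willmore varifolds (Proposition \ref{prop:StructureProperties}: upper semicontinuity of the density together with integrality) $\te_{V_n}(p_0)\ge1$. Applying \eqref{monot} centred at $p_0$, letting $\si\searrow0$ (using \eqref{eq10}) and $\ro\to\infty$ (using that $M_n$ is compact) gives
\[
\pi\ \le\ \frac14\cW(V_n)+\frac12\int\frac{\lgl p-p_0,\nu_0\rgl}{|p-p_0|^2}\,d|\si_{V_n}|(p)\ \le\ \frac14\cW(V_n)+\frac{|\si_0|(\R^3)}{2\,d(p_0,\ga)},
\]
hence $d(p_0,\ga)\le|\si_0|(\R^3)/\big(2\pi-\tfrac12\cW(V_n)\big)\le 2|\si_0|(\R^3)/\de=:R_0$. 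Thus $M_n\con\overline{\cN_{R_0}(\ga)}$ for all $n$, so the supports are uniformly bounded; without $\cW(V_n)<4\pi$ the minimizing sequence could escape to infinity, and this is exactly where the hypothesis is used.

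Next I would extract a limit. Using \eqref{monot} once more — fixing a large ball $B_{\ro_0}(p_0)\supset\overline{\cN_{R_0}(\ga)}$ with $d(p_0,\ga)\ge1$, comparing $A_{V_n}(\ro_0)$ with $\lim_{\ro\to\infty}A_{V_n}(\ro)$, and using Young's inequality on the remainder $\ro_0^{-2}\int\lgl\vec H_{V_n},p-p_0\rgl\,d\mu_{V_n}$ together with the Willmore bound — one gets $\mu_{V_n}(\R^3)=\mu_{V_n}(B_{\ro_0}(p_0))\le C_0$ uniformly in $n$. Then $\|\vec H_{V_n}\|_{L^1(\mu_{V_n})}\le\mu_{V_n}(\R^3)^{1/2}\cW(V_n)^{1/2}$ and $|\si_{V_n}|(\R^3)=|\si_0|(\R^3)$ are uniformly bounded, so the first variations are equibounded, and by the compactness theorem for varifolds with boundary recalled in Appendix A a subsequence (not relabelled) converges, $V_n\to V=\bv(M,\te_V)$ as varifolds, with $V$ an integer rectifiable curvature varifold with boundary, $\mu_{V_n}\wsc\mu_V$, and $\supp\mu_V\con\overline{\cN_{R_0}(\ga)}$ compact. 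Passing to the limit in $\int\div_{TM_n}X\,d\mu_{V_n}=-2\int\lgl\vec H_{V_n},X\rgl\,d\mu_{V_n}+\int X\,d\si_0$ and invoking the weak continuity of the mean curvature under varifold convergence with equibounded Willmore energy (\cite{KuSc}), $V$ has generalized mean curvature $\vec H_V$ with $\cW(V)\le\liminf_n\cW(V_n)=\inf\cP$ and generalized boundary $\si_V=\si_0$ (the constant boundaries $\si_{V_n}=\si_0$ pass to the limit, and $\si_0=\nu_0\,m\,\cH^1\res\ga$ is singular with respect to the $2$-rectifiable $\mu_V$). In particular $\si_V=\si_0\neq0$, so $V\neq0$, and $\ga=\supp\si_0\con\supp\mu_V=M$.

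Finally, since each $M_n\cup\ga=M_n$ is compact and connected, the $M_n$ are uniformly bounded, and $\supp\si_{V_n}=\ga^1\cup\dots\cup\ga^\al$ does not depend on $n$, I would apply Theorem \ref{thm:HausdorffDistance} in the form of Remark \ref{rem:HausConvergence2} with $\ga^i_n\equiv\ga^i$ (so $\be=\al$ and there are no shrinking components): along a further subsequence, $M_n\to M\cup\ga=M$ in $d_\cH$ and $M\cup\ga=M$ is connected. Therefore $V$ is admissible for $\cP$ — support compact and connected, $\si_V=\si_0$ — and $\cW(V)\le\inf\cP$, whence $\cW(V)=\inf\cP$ and $V$ is a minimizer. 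As indicated, the only genuinely delicate point is the size bound, which alone uses the threshold $4\pi$; the mass bound, Allard-type compactness, lower semicontinuity of $\cW$, and the identification $\si_V=\si_0$ are standard, and the preservation of connectedness is handled entirely by Theorem \ref{thm:HausdorffDistance} and Remark \ref{rem:HausConvergence2}, the only care needed being to invoke them with the boundary curves held fixed.
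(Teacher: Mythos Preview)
Your overall architecture matches the paper's: diameter bound via monotonicity (this is indeed where $\inf\cP<4\pi$ enters), mass bound, Allard-type compactness, then Theorem \ref{thm:HausdorffDistance}/Remark \ref{rem:HausConvergence2} for connectedness. Those parts are fine.

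The genuine gap is the identification $\si_V=\si_0$, which you treat as automatic (``the constant boundaries pass to the limit'', citing weak continuity of mean curvature from \cite{KuSc}). This is not automatic. From $\de V_n\to\de V$ you get $\de V=-2\eta+\si_0$ where $\eta$ is a subsequential weak-$*$ limit of $\vec H_{V_n}\mu_{V_n}$; comparing with $\de V=-2\vec H_V\mu_V+\si_V$ you need $\eta|_\ga=0$, i.e.\ that no mean-curvature mass concentrates on the $1$-set $\ga$ in the limit. The H\"older bound $|\eta|(B_r)\le C\mu_V(B_r)^{1/2}$ alone does \emph{not} exclude a singular part along a curve (take $\eta=\cH^1\res\ga$ inside a $2$-rectifiable $\mu_V$). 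The paper supplies exactly this missing step: it first derives a \emph{uniform} density bound $\mu_{V_n}(B_\si(q))\le C\si^2$ for $q\in\ga$ and small $\si$ (via monotonicity at boundary points, using the smoothness of $\ga$ and $\nu_0\in(T\ga)^\perp$ to estimate $T_{q,\si}$), then covers a tube $\cN_{1/m}(\ga)$ by $O(m)$ such balls to get $\mu_{V_n}(\cN_{1/m}(\ga))=O(1/m)$ uniformly in $n$, and finally tests with $X=\La_m Y$ and uses Cauchy--Schwarz to kill the curvature term on $\ga$, forcing $\si_V=\si_0$. Note also that you invoke Appendix~A compactness (Theorem~\ref{thm:ConvergenceVarifolds}), but that theorem needs a uniform $L^p$ bound on the full $A_{ijk}$, which you do not have; the paper uses Allard's compactness from \cite{SiGMT} instead.

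A secondary issue: your argument for $V\neq0$ relies on $\si_V=\si_0$, so it is circular as written. The paper proves $V\neq0$ independently, by a separating-torus/disjoint-balls argument. (There is also a shorter route you could take: if $V=0$ then $\mu_{V_n}(\R^3)\to0$, hence $\int|\vec H_{V_n}|\,d\mu_{V_n}\to0$, so $\de V_n\to\si_0$, forcing $\si_0=0$, a contradiction.)
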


\begin{proof}
	Let $V_n=\bv(M_n,\te_{V_n})$ be a minimizing sequence for the problem $\cP$. Call $I=\inf \cP<4\pi$, and suppose without loss of generality that $\cW(V_n)<4\pi$ for any $n$. For any $p_0\in M_n\sm \ga$ passing to the limits $\si\to0$ and $\ro\to\infty$ in the monotonicity formula \eqref{monot} we get
	\[
	4\pi \le \cW(V_n) + 2\frac{|\si_0|(\ga)}{d(p_0,\ga)},
	\]
	then
	\[
	\sup_{p_0\in M_n\sm\ga} d(p_0,\ga)\le 2\frac{|\si_0|(\ga)}{4\pi-\cW(V_n)}\le C(\si_0, I).
	\]
	Hence the sequence $M_n$ is uniformly bounded in $\R^3$. Integrating the tangential divergence of the field $X(p)=\chi(p)\,(p)$ where $\chi(p)=1$ for any $p \in B_{R_0}(0)\supset M_n$ for any $n$ we get that
	\[
	2\mu_{V_n}(\R^3) = \int \div_{TM_n} X \, d\mu_{V_n} = - 2\int \lgl H_{V_n}, X \rgl \,d\mu_{V_n} + \int \lgl X, \nu_0\rgl d|\si_0| \le  C(\si_0,I) \mu_{V_n}(\R^3)^{\frac12} + C(\si_0,I),
	\]
	for any $n$, and then $\mu_{V_n}$ is uniformly bounded. By the classical compactness theorem for rectifiable varifolds (\cite{SiGMT}) we have that $V_n\to V=\bv(M,\te_V)$ in the sense of varifolds (up to subsequence), and $M$ is compact.\\
	By an argument analogous to the proof of Theorem \ref{thm:HausdorffDistance} we can show that $V\neq0$. Suppose in fact that $V=0$. Since $\alpha\ge2$ and the curves $\ga^1,...,\ga^\alpha$ are disjoint and embedded, there exist a embedded torus $\phi:S^1\times S^1\to\R^3\sm \ga$ dividing $\R^3$ into two connected components $A_1,A_2$ such that $A_1\supset \ga^1$ and $A_2\supset \ga^2\cup...\cup\ga^\alpha$. Since $M_n$ is connected and uniformly bounded, there is a sequence of points $y_n\in M_n\cap \phi(S^1\times S^1)$ with a converging subsequence $y_{n_k}\to y$. Observe that there is $\De>0$ such that $d(y_n,\ga)\ge\De$. Since $V=0$ we have that $y\not\in \supp V$. Let $N\ge 4$ be a natural number and consider the balls $\left\{B_{\frac{j}{N}\frac\De2}(y)\right\}_{j=1}^N$. Up to subsequence, for $n$ sufficiently large there is $z_{n,j}\in \pa B_{\frac{j}{N}\frac\De2}(y) \cap M_n$. Also the balls
	\[
	\left\{ B_{\frac{\De}{4N}}(z_{n,j}) \right\}_{j=1}^N
	\]
	are pairwise disjoint. As in \eqref{eq9} we get that
	\[
	\pi \le \frac32 \frac{\mu_{V_n}\left( B_{\frac{\De}{4N}}(z_{n,j}) \right)}{\left( \frac{\De}{4N} \right)^2} +\frac34 \int_{B_{\frac{\De}{4N}}(z_{n,j})} |H_{V_n}|^2\,d\mu_{V_n}
	\]
	for any $j=1,...,N$. Since $\limsup_n \mu_{V_n} \left( B_{\frac{\De}{4N}}(z_{n,j}) \right) \le \mu_V( B_{\frac34 \De}(y))=0$, summing over $j=1,...,N$ and passing to the limit in $n$ we get
	\[
	4\pi\le N\pi \le \frac{3}{4}\lim_n \cW(V_n) \le 3\pi,
	\]
	that gives a contradiction. Hence Theorem \ref{thm:HausdorffDistance} implies that $\supp V\cup \ga= M\cup \ga$ is connected. Since $\cW(V)\le I$ by lower semicontinuity, we are left to show that $\si_V=\si_0$.\\
	Since $\ga$ is smooth we can write that
	\begin{equation}\label{eq17}
	|\pi_{(T\ga)^\perp}(p-q_0)|\le C_\ga|p-q_0|^2
	\end{equation}
	as $p\to q_0$ with $p\in \ga$ for some constant $C_\ga$ depending on the curvature of $\ga$. Let $0<\si< s$ with $s=s(\ga)$ such that \eqref{eq17} holds for $p\in \ga\cap B_s(q)$ for any $q\in \ga$. For any $q_0\in \ga$ the monotonicity formula \eqref{monot} at $q_0$ on $V_n$ gives
	\[
	\begin{split}
		\frac{\mu_{V_n}(B_\si(q_0))}{\si^2}&\le -\frac{1}{\si^2}\int_{B_\si(q_0)} \lgl H_{V_n}, p-q_0\rgl \, d\mu_{V_n}(p) - \frac12 \int_{B_\si(q_0)} \left(\frac{1}{|p-q_0|^2}- \frac{1}{\si^2}\right) \lgl p-q_0, \nu_0\rgl \, d|\si_0|(p) + \lim_{\ro\to\infty} A_{V_n}(\ro) \\
		&\le \cW(V_n)^{\frac12} \left( \frac{\mu_{V_n}(B_\si(q_0))}{\si^2}  \right)^{\frac12} + \frac12 \int_{B_\si(q_0)} \frac{C_\ga|p-q_0|^2}{|p-q_0|^2} + \frac{1}{\si}\, d|\si_0|(p) + \pi +\frac12 \int \frac{\lgl p-q_0, \nu_0 \rgl}{|p-p_0|^2}\,d|\si_0|(p)
		\\
		&\le \cW(V_n)^{\frac12} \left( \frac{\mu_{V_n}(B_\si(q_0))}{\si^2}  \right)^{\frac12} + C_\ga |\si_0|(B_\si(q_0)) + \frac{1}{\si}|\si_0|(B_\si(q_0)) + \pi +\frac12 \frac{1}{s} |\si_0|\left(\ga\sm B_\si(q)\right) 
		\\
		&\le C(I) \left( \frac{\mu_{V_n}(B_\si(q_0))}{\si^2}  \right)^{\frac12} + C(\ga,\si_0).
	\end{split}
	\]
	In particular
	\begin{equation}
		\mu_{V_n}(B_\si(q))\le C(I,\ga,\si_0)\si^2
	\end{equation}
	for any $q_0\in \ga$, any $\si\in(0,s)$, and any $n$.\\
	Consider now any $X\in C^0_c(B_r(q_0))$ for fixed $q_0\in \ga$ and $r\in(0,s)$. By varifold convergence we have that
	\begin{equation}\label{eq19}
	\lim_n -2 \int \lgl H_{V_n}, X \rgl \, d\mu_{V_n} + \int \lgl X, \nu_0 \rgl\,d|\si_0| =  -2 \int \lgl H_V, X \rgl \, d\mu_V + \int \lgl X, \nu_V \rgl\,d|\si_V|, 
	\end{equation}
	where we wrote $\si_V=\nu_V\,|\si_V|$. Now let $m\in \N$ be large and consider the cut off function
	\begin{equation}\label{eq20}
	\Lambda_m(p)=\begin{cases}
		1-md(p,\ga) & d(p,\ga)\le \frac1m,\\
		0 & d(p,\ga)>\frac1m.
	\end{cases}
	\end{equation}
	Take now $X=\La_m Y$ for some $Y\in C^0_c(B_r(q_0))$. We have that
	\[
	\begin{split}
		\limsup_{m\to\infty} \lim_n \left| \int \lgl H_{V_n}, X \rgl\,d\mu_{V_n} \right| &=
		\limsup_{m\to\infty} \lim_n \left| \int_{B_r(q_0)\cap \cN_{\frac1m}(\ga)} \La_m\lgl H_{V_n}, Y \rgl\,d\mu_{V_n} \right|	\\
		&\le \|Y\|_\infty \limsup_m \lim_n \cW(V_n)^{\frac12} \mu_{V_n}\left( B_r(q_0)\cap \cN_{\frac1m}(\ga) \right)^{\frac12}.
	\end{split}
	\]
	Moreover, there exists a constant $C(\ga)$ such that $ B_r(q_0)\cap \cN_{\frac1m}(\ga)\con \cup_{i=1}^{C(\ga)m} B_{\frac2m}(q_i)$ for some points $q_i\in \ga$ and at most $C(\ga)m$ balls $\{B_{\frac2m}(q_i)\}_i$. Hence for $\frac2m<s$ we can estimate
	\[
	\mu_{V_n}\left( B_r(q_0)\cap \cN_{\frac1m}(\ga) \right) \le \sum_{i=1}^{C(\ga)m} \mu_{V_n}\left(B_{\frac2m}(q_i)\right) \le C(\ga)m C(I,\ga,\si_0) \frac{4}{m^2}.
	\]
	Therefore
	\begin{equation}\label{eq22}
		\limsup_{m\to\infty} \lim_n \left| \int \lgl H_{V_n}, X \rgl\,d\mu_{V_n} \right| \le
		\|Y\|_\infty \limsup_m C(I,\ga,\si_0) \frac{1}{\sqrt{m}}=0.
	\end{equation}
	Hence setting $X=\La_m Y$ in \eqref{eq19} and letting $m\to\infty$ we obtain
	\[
	\int \lgl Y,\nu_0\rgl \,d|\si_0| =\int \lgl Y, \nu_V \rgl \,d|\si_V|,
	\]
	for any $Y\in C^0_c(B_r(q_0))$. Since $q_0\in\ga$ is arbitrary we conclude that $\si_V=\si_0$, and thus $V$ is a minimizer.	
\end{proof}

\begin{thm}\label{thm:Existence4pi}
	Let $\ga=\ga^1\cup...\cup\ga^\alpha$ be a disjoint union of smooth embedded compact $1$-dimensional manifolds with $\alpha\in \N_{\ge2}$.\\
	Let $m:\ga\to\N_{\ge1}$ by $\cH^1$-measurable with $m\in L^\infty(\cH^1\res\ga)$.\\
	Let $\cQ$ be the minimization problem
	\begin{equation}
	\cQ\,:=\, \min\left\{ \,\cW(V) \quad|\quad V=\bv(M,\te_V):\quad
	|\si_V|\le m\cH^1\res\ga,\quad \supp V\cup \ga \,\, \mbox{ compact, connected } \right\}.
	\end{equation}
	If $\inf\cP<4\pi$, then $\cP$ has minimizers.
\end{thm}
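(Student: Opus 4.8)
The plan is to run the same direct method as in the proof of Theorem~\ref{thm:Existence4pinu}, almost verbatim; the only genuinely new point is that the boundary condition is now an inequality, which passes to the limit by soft arguments and requires none of the near-boundary density estimates used there.

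First I would take a minimizing sequence $V_n=\bv(M_n,\te_{V_n})$ for $\cQ$, set $I:=\inf\cQ<4\pi$, and assume $\cW(V_n)<4\pi$ for all $n$. Since $|\si_{V_n}|\le m\cH^1\res\ga$, the total boundary mass satisfies $|\si_{V_n}|(\ga)\le\|m\|_{L^\infty}\cH^1(\ga)=:C_0$ uniformly in $n$. Letting $\si\searrow0$ and $\ro\to\infty$ in the monotonicity formula~\eqref{monot} at any $p_0\in M_n\sm\ga$ gives $4\pi\le\cW(V_n)+2C_0/d(p_0,\ga)$, whence $\sup_{p_0\in M_n\sm\ga}d(p_0,\ga)\le 2C_0/(4\pi-\cW(V_n))\le C(m,\ga,I)$, so the $M_n$ are uniformly bounded. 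Integrating $\div_{TM_n}X$ with $X(p)=\chi(p)\,p$ and $\chi\equiv1$ on a fixed ball containing all the $M_n$, exactly as in Theorem~\ref{thm:Existence4pinu} and using $\cW(V_n)<4\pi$ together with $|\si_{V_n}|(\ga)\le C_0$, yields a uniform bound on $\mu_{V_n}(\R^3)$. The compactness theorem for curvature varifolds with boundary (Appendix~A) then gives, up to subsequence, $V_n\to V=\bv(M,\te_V)$ in the sense of varifolds with $M$ compact, $\si_{V_n}\wsc\si_V$, and $\cW(V)\le\liminf_n\cW(V_n)=I$ by lower semicontinuity.

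Next I would rule out $V=0$ by the torus-and-packing argument of Theorem~\ref{thm:Existence4pinu}: since $\alpha\ge2$, fix a torus embedded in $\R^3\sm\ga$ separating $\ga^1$ from $\ga^2\cup\dots\cup\ga^\alpha$; because $M_n\cup\ga$ is connected it meets the torus, and since the torus avoids $\ga$ this intersection lies in $M_n$, say at a point $y_n$ with $d(y_n,\ga)\ge\De>0$, so along a subsequence $y_n\to y\notin\supp V=\epty$; applying the packing estimate~\eqref{eq9} to $N$ pairwise disjoint balls centred on $M_n\cap\pa B_{(j/N)(\De/2)}(y)$ and letting $n\to\infty$ forces $N\pi\le\tfrac34\lim_n\cW(V_n)<3\pi$, impossible for $N\ge4$. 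Hence $V\neq0$, and Theorem~\ref{thm:HausdorffDistance} — applied through Remark~\ref{rem:HausConvergence2}, since here only $M_n\cup\ga$ is known to be connected, with $\ga^i_n\equiv\ga^i$ and $\beta=\alpha$ so that no component collapses — gives $M_n\cup\ga\to M\cup\ga$ in $d_\cH$ with $M\cup\ga=\supp V\cup\ga$ compact and connected.

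Finally I would verify admissibility of the limit, i.e. $|\si_V|\le m\cH^1\res\ga$. The measures $\si_{V_n}$ are supported on the closed set $\ga$, hence so is the weak-$*$ limit $\si_V$; and for every open $U\con\R^3$, lower semicontinuity of the total variation under weak-$*$ convergence gives $|\si_V|(U)\le\liminf_n|\si_{V_n}|(U)\le(m\cH^1\res\ga)(U)$, which extends to all Borel sets by outer regularity of the finite Radon measures $|\si_V|$ and $m\cH^1\res\ga$. Thus $V$ is admissible for $\cQ$ with $\cW(V)\le I=\inf\cQ$, so it is a minimizer. I do not expect a real obstacle in this proof: the content-bearing ingredients — the monotonicity formula for the a~priori boundedness and mass bounds, and Theorem~\ref{thm:HausdorffDistance}/Remark~\ref{rem:HausConvergence2} for transferring the connectedness constraint — are already in place, and unlike in Theorem~\ref{thm:Existence4pinu} no fine analysis near $\ga$ is needed precisely because the constraint is an inequality rather than an equality.
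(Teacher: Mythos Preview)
Your overall architecture is fine, and the diameter bound, mass bound, $V\neq0$ argument, and appeal to Remark~\ref{rem:HausConvergence2} are all correct. The gap is in the step where you claim that the compactness theorem of Appendix~A (Theorem~\ref{thm:ConvergenceVarifolds}) yields $\si_{V_n}\wsc\si_V$. That theorem requires a uniform $L^p$ bound, $p>1$, on the full curvature functions $A_{ijk}^{(n)}$ of $V_n$, not merely on the mean curvature; the class $\cQ$ imposes no such bound, and a minimizing sequence with $\cW(V_n)<4\pi$ gives you control only of $\int|H_{V_n}|^2\,d\mu_{V_n}$. So you are not entitled to invoke it, and you must fall back on Allard's compactness (as the paper does in Theorem~\ref{thm:Existence4pinu}), which gives $V_n\to V$ and $\de V_n\wsc\de V$ but not the separate convergence $\si_{V_n}\wsc\si_V$.

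This matters for your final step. From $\de V_n=-2H_{V_n}\mu_{V_n}+\si_{V_n}$ you can extract (after a subsequence) $\si_{V_n}\wsc\si$ with $\supp\si\subset\ga$ and $|\si|\le m\cH^1\res\ga$, and $-2H_{V_n}\mu_{V_n}\wsc\eta$, so that $\de V=\eta+\si$. But the generalized boundary $\si_V$ is by definition the part of $\de V$ singular to $\mu_V$, and nothing prevents $\eta$ from concentrating mass on $\ga$: since $\mu_V(\ga)=0$, any such piece $\eta\res\ga$ lands in $\si_V$, giving $\si_V=\si+\eta\res\ga$, and then $|\si_V|\le m\cH^1\res\ga$ can fail. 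Ruling out $\eta\res\ga\neq0$ is exactly what the near-boundary density estimate $\mu_{V_n}(B_\si(q))\le C\si^2$ for $q\in\ga$ buys you: via Cauchy--Schwarz it forces $\int_{\cN_{1/m}(\ga)}|H_{V_n}|\,d\mu_{V_n}\to0$ uniformly in $n$ as $m\to\infty$, and then the cut-off argument with $\La_m$ identifies $\si=\si_V$. The paper carries this out in Theorem~\ref{thm:Existence4pi} just as in Theorem~\ref{thm:Existence4pinu}; your claim that ``no fine analysis near $\ga$ is needed precisely because the constraint is an inequality'' is therefore mistaken --- the density estimate is not about matching the constraint exactly, it is about preventing mean-curvature concentration from polluting the limit boundary measure.
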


\begin{proof}
	We adopt the same notation used in the proof of Theorem \ref{thm:Existence4pinu}. In this case the generalized boundaries of the minimizing sequence $V_n=\bv(M_n,\te_{V_n})$ are denoted by $\si_{V_n}=\nu_{V_n}|\si_{V_n}|$, and $|\si_{V_n}|\le m\cH^1\res\ga$. The very same strategy used in Theorem \ref{thm:Existence4pinu} shows that $V_n$ converges up to subsequence in the sense of varifolds to a limit $V=\bv(M,\te_V)\neq0$ with $M\cup \ga$ compact and connected by Theorem \ref{thm:HausdorffDistance} and Remark \ref{rem:HausConvergence2}, and $\cW(V)\le \inf \cQ$. Hence, to see that $V$ is a minimizer, we are left to show that $|\si_V|\le m\cH^1\res\ga$. Calling $\mu:=m\cH^1\res\ga$, we find as in Theorem \ref{thm:Existence4pinu} that there exist constants $C=C(\inf\cQ,\ga,\mu)$ and $s=s(\ga)$ such that
	\[
	\mu_{V_n}(B_\si(q))\le C\si^2,
	\]
	for any $q\in \ga$, any $\si\in (0,s)$, and any $n$ large.\\
	For any $X\in C^0_c(B_r(q_0))$ for fixed $q_0\in \ga$ and $r\in(0,s)$ the convergence of the first variation of varifolds reads
	\begin{equation}\label{eq21}
		\lim_n -2 \int \lgl H_{V_n}, X \rgl \, d\mu_{V_n} + \int \lgl X, \nu_{V_n} \rgl\,d|\si_{V_n}| =  -2 \int \lgl H_V, X \rgl \, d\mu_V + \int \lgl X, \nu_V \rgl\,d|\si_V|,
	\end{equation}
	where we wrote $\si_V=\nu_V|\si_V|$. Now we set $X=\La_m Y$ in \eqref{eq21} for $Y\in C^0_c(B_r(q_0))$ and $\La_m$ as in \eqref{eq20}. Estimating as in \eqref{eq22} and taking the limit $m\to\infty$ we obtain
	\[
	\lim_n  \int \lgl Y, \nu_{V_n} \rgl\,d|\si_{V_n}| =   \int \lgl Y, \nu_V \rgl\,d|\si_V|,
	\]
	that is $\si_{V_n}\wsc\si_V$, and thus $|\si_V|(A)\le\liminf_n |\si_{V_n}|(A)\le \mu(A)$ for any open set $A$. Hence $|\si_V|\le \mu$ and $V$ is a minimizer of $\cQ$.	
\end{proof}

\begin{remark}
	Assuming in the above existence theorems that the connected components of the boundary datum are at least two (i.e. $\alpha\ge2$) is technical, but it is also essential in order to obtain a non-trivial minimization problem, i.e. a problem that does not necessarily reduces to a Plateau's one. In fact if we consider a single closed embedded smooth oriented curve $\ga$, Lemma 34.1 in \cite{SiGMT} guarantees the existence of a minimizing integer rectifiable current $T=\tau(M,\te,\xi)$ with compact support and with boundary $\ga$. Hence by Lemma 33.2  in \cite{SiGMT} the integer rectifiable varifold $V=\bv(M,\te)$ is stationary and $\supp\si_V\con\ga$. Then we can take $M=\supp T$, that is compact. Since $\pa T=\ga$ and $T$ is minimizing, the set $M\cup \ga$ is connected and $\cW(V)$ is trivially zero.	
\end{remark}

\noindent The Existence Theorems \ref{thm:Existence4pinu} and \ref{thm:Existence4pi} can be applied in different perturbative regimes, as discussed in the following corollaries and remarks.

\begin{cor}\label{cor:Perturbative1}
	Let $\ga=\ga^1\cup...\cup\ga^\alpha$ be a disjoint union of smooth embedded compact $1$-dimensional manifolds with $\alpha\in \N_{\ge2}$. Suppose that there exists a compact connected surface $\Si\con\R^3$ with boundary $\pa \Si=\ga$. Let $\ep\in(-\ep_0,\ep_0)$ and $f_\ep:\R^3\to\R^3$ be a smooth family of diffeomorphisms with $f_0=id|_{\R^3}$. For any $\ep$ let
	\[
	\si_\ep=co_{f_\ep(\Si)}\cH^1\res(f_\ep(\ga)),
	\]
	where $co_{f_\ep(\Si)}$ is the conormal field of $f_\ep(\Si)$.\\
	If $\cW(\Si)<4\pi$, there exists $\ep_1>0$ such that if $\ep_0<\ep_1$ the minimization problems
	\begin{equation}
	\cP_\ep\,:=\, \min\left\{ \,\cW(V) \quad|\quad V=\bv(M,\te_V):\quad
	\si_V=\si_\ep,\quad \supp V\cup f_\ep(\ga) \,\, \mbox{ compact, connected } \right\},
	\end{equation}
	\begin{equation}
	\cQ_\ep\,:=\, \min\left\{ \,\cW(V) \quad|\quad V=\bv(M,\te_V):\quad
	|\si_V|\le \cH^1\res(f_\ep(\ga)),\quad \supp V\cup f_\ep(\ga) \,\, \mbox{ compact, connected } \right\},
	\end{equation}
	have minimizers for any $\ep\in(-\ep_0,\ep_0)$.
\end{cor}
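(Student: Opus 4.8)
The plan is to exhibit $f_\ep(\Si)$ itself, regarded as a multiplicity-one varifold, as an admissible competitor for both $\cP_\ep$ and $\cQ_\ep$, so that the Existence Theorems \ref{thm:Existence4pinu} and \ref{thm:Existence4pi} apply as soon as we check that the relevant infimum stays below $4\pi$ for $\ep$ small.

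First I would set $V_\ep:=\bv(f_\ep(\Si),1)$. Since $f_\ep$ is a diffeomorphism of $\R^3$ and $\Si$ is a compact connected smooth surface with $\pa\Si=\ga$, the set $f_\ep(\Si)$ is again a compact connected smooth surface, with boundary $f_\ep(\ga)=f_\ep(\ga^1)\cup\dots\cup f_\ep(\ga^\alpha)$ a disjoint union of $\alpha\ge2$ smooth embedded compact $1$-dimensional manifolds. Applying the classical first variation identity for a smooth compact surface with boundary, $\int\div_{Tf_\ep(\Si)}X=-2\int\langle\vec H,X\rangle+\int_{f_\ep(\ga)}\langle X,co_{f_\ep(\Si)}\rangle\,d\cH^1$, one reads off that the generalized boundary of $V_\ep$ is $\si_{V_\ep}=co_{f_\ep(\Si)}\,\cH^1\res f_\ep(\ga)=\si_\ep$, and that $|\si_{V_\ep}|=\cH^1\res f_\ep(\ga)$ because $|co_{f_\ep(\Si)}|\equiv1$ and $co_{f_\ep(\Si)}$ is orthogonal to $Tf_\ep(\ga)$ by definition of the conormal. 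Hence $V_\ep$ is admissible for $\cP_\ep$ (with multiplicity $m\equiv1$ and $\nu_0=co_{f_\ep(\Si)}$) and, a fortiori, for $\cQ_\ep$ since $|\si_{V_\ep}|\le\cH^1\res f_\ep(\ga)$; moreover $\supp V_\ep\cup f_\ep(\ga)=f_\ep(\Si)$ is compact and connected. This gives $\inf\cP_\ep\le\cW(f_\ep(\Si))$ and $\inf\cQ_\ep\le\cW(f_\ep(\Si))$.

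The point is then to show that $\ep\mapsto\cW(f_\ep(\Si))$ is continuous at $\ep=0$. Covering $\Si$ by a finite atlas of smooth immersions $\vp$, the compositions $f_\ep\circ\vp$ form a smooth family of immersions of a fixed compact manifold; therefore the induced metrics $g^\ep_{ij}=\langle\pa_i(f_\ep\circ\vp),\pa_j(f_\ep\circ\vp)\rangle$, their inverses, the second fundamental forms $\sff^\ep_{ij}$, and consequently the integrand $|\vec H|^2$ and the area element all depend continuously on $\ep$, uniformly on the compact $\Si$. Integrating over $\Si$ yields $\cW(f_\ep(\Si))\to\cW(f_0(\Si))=\cW(\Si)$ as $\ep\to0$, so the hypothesis $\cW(\Si)<4\pi$ produces a threshold $\ep_1>0$ with $\cW(f_\ep(\Si))<4\pi$ for all $|\ep|<\ep_1$.

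Finally, assuming $\ep_0<\ep_1$, for every $\ep\in(-\ep_0,\ep_0)$ we have $\inf\cP_\ep\le\cW(f_\ep(\Si))<4\pi$ and $\inf\cQ_\ep<4\pi$; Theorem \ref{thm:Existence4pinu} applied with boundary datum $f_\ep(\ga)$ and $\si_0=\si_\ep$ yields a minimizer of $\cP_\ep$, and Theorem \ref{thm:Existence4pi} applied with $m\equiv1$ yields a minimizer of $\cQ_\ep$. The only mildly delicate step is the continuity of the Willmore energy along the perturbation, which is a routine smooth-dependence-plus-compactness argument; there is no substantial obstacle here, the corollary being in essence a stability statement built on top of the two Existence Theorems.
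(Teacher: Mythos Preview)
Your proof is correct and follows precisely the approach the paper intends: the corollary is stated immediately after the sentence ``The Existence Theorems \ref{thm:Existence4pinu} and \ref{thm:Existence4pi} can be applied in different perturbative regimes, as discussed in the following corollaries and remarks,'' and no explicit proof is given, so the paper treats it as the direct application you have written out---use $f_\ep(\Si)$ as competitor, invoke continuity of $\cW$ along the smooth family to keep the infimum below $4\pi$, and apply the two Existence Theorems.
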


\begin{cor}\label{cor:Perturbative2}
	Let $\ga=\ga^1\cup...\cup\ga^\alpha$ be a disjoint union of smooth embedded compact $1$-dimensional manifolds with $\alpha\in \N_{\ge2}$. Suppose that there exists a compact connected minimal surface $\Si\con\R^3$ with boundary $\pa \Si=\ga$. Let $\ep\in(-\ep_0,\ep_0)$ and $f_\ep:\R^3\to\R^3$ be a smooth family of diffeomorphisms with $f_0=id|_{\R^3}$. For any $\ep$ let
	\[
	\si_\ep=co_{f_\ep(\Si)}\cH^1\res(f_\ep(\ga)),
	\]
	where $co_{f_\ep(\Si)}$ is the conormal field of $f_\ep(\Si)$.\\
	Then there exists $\ep_1>0$ such that if $\ep_0<\ep_1$ the minimization problems
	\begin{equation}
	\cP_\ep\,:=\, \min\left\{ \,\cW(V) \quad|\quad V=\bv(M,\te_V):\quad
	\si_V=\si_\ep,\quad \supp V\cup f_\ep(\ga) \,\, \mbox{ compact, connected } \right\},
	\end{equation}
	\begin{equation}
	\cQ_\ep\,:=\, \min\left\{ \,\cW(V) \quad|\quad V=\bv(M,\te_V):\quad
	|\si_V|\le \cH^1\res(f_\ep(\ga)),\quad \supp V\cup f_\ep(\ga) \,\, \mbox{ compact, connected } \right\},
	\end{equation}
	have minimizers for any $\ep\in(-\ep_0,\ep_0)$.
\end{cor}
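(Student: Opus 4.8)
The plan is to reduce to Theorem \ref{thm:Existence4pi} (and Theorem \ref{thm:Existence4pinu}) by showing that for $\ep_0$ small enough the competitor surfaces $f_\ep(\Si)$ provide an upper bound $\inf \cQ_\ep < 4\pi$ (and similarly $\inf\cP_\ep<4\pi$), and that the boundary conditions $\si_\ep$ have the structural form required by those theorems. First I would observe that $f_\ep(\Si)$ is itself an admissible competitor for both $\cP_\ep$ and $\cQ_\ep$: it is a compact connected surface, its boundary is $f_\ep(\pa\Si)=f_\ep(\ga)$, and as a varifold $V_\ep=\bv(f_\ep(\Si),1)$ has generalized boundary exactly $\si_\ep=co_{f_\ep(\Si)}\cH^1\res f_\ep(\ga)$, so in particular $|\si_{V_\ep}|=\cH^1\res f_\ep(\ga)$, meeting the constraint of $\cQ_\ep$ with equality and the constraint of $\cP_\ep$ exactly. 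Hence $\inf\cQ_\ep\le \inf\cP_\ep \le \cW(f_\ep(\Si))$, and it suffices to bound the latter.

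The key step is the estimate $\cW(f_\ep(\Si))\to\cW(\Si)$ as $\ep\to0$, uniformly in the sense that there exists $\ep_1>0$ with $\cW(f_\ep(\Si))<4\pi$ whenever $|\ep|<\ep_1$. Here I would use that $\Si$ is a \emph{minimal} surface, so $\cW(\Si)=0<4\pi$. Since $f_\ep$ is a smooth family of diffeomorphisms with $f_0=\mathrm{id}$, the pulled-back immersions $f_\ep\circ\vp_\Si$ (for $\vp_\Si$ a parametrization of $\Si$) depend smoothly on $\ep$ in $C^2$ on the compact manifold $\Si$; the induced metric, its inverse, the second fundamental form, the mean curvature vector, and the area measure all depend continuously on $\ep$ in the $C^0$ topology. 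Therefore $\ep\mapsto \cW(f_\ep(\Si))=\int_\Si |\vec H_{f_\ep(\Si)}|^2\,d\mu_{f_\ep(\Si)}$ is continuous at $\ep=0$ with value $\cW(\Si)=0$, so for $\ep_1$ small we get $\cW(f_\ep(\Si))<4\pi$ for all $|\ep|<\ep_1$. Consequently $\inf\cP_\ep<4\pi$ and $\inf\cQ_\ep<4\pi$.

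It remains to check the hypotheses of Theorems \ref{thm:Existence4pinu} and \ref{thm:Existence4pi}. The boundary $f_\ep(\ga)=f_\ep(\ga^1)\cup\dots\cup f_\ep(\ga^\alpha)$ is again a disjoint union of $\alpha\ge2$ smooth embedded compact $1$-manifolds, since $f_\ep$ is a diffeomorphism. For $\cQ_\ep$ the multiplicity is $m\equiv1\in L^\infty$, so Theorem \ref{thm:Existence4pi} applies directly and yields a minimizer. For $\cP_\ep$ the measure $\si_\ep=co_{f_\ep(\Si)}\,\cH^1\res f_\ep(\ga)$ is of the form $\nu_0\,m\,\cH^1\res f_\ep(\ga)$ with $m\equiv1$ and $\nu_0=co_{f_\ep(\Si)}$ the conormal, which is a smooth unit vector field along $f_\ep(\ga)$, in particular $\cH^1$-measurable with $|\nu_0|=1$; moreover it is automatically normal to $f_\ep(\ga)$ by definition of the conormal, so $\nu_0\in (Tf_\ep(\ga))^\perp$. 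Thus Theorem \ref{thm:Existence4pinu} applies and $\cP_\ep$ has a minimizer. The only delicate point is the $C^2$-continuity of the geometric quantities under the diffeomorphisms $f_\ep$; this is where one must be slightly careful, but since $\Si$ is compact and $f_\ep$ smooth in all variables, it is a routine application of the chain rule and the smooth dependence of eigenvalues/inverses of the metric on the parameter. I do not expect any genuine obstacle beyond this bookkeeping; the substance of the argument is entirely in having $\cW(\Si)=0$ for a minimal surface, which gives the strict inequality with room to spare.
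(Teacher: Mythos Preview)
Your proposal is correct and follows precisely the argument the paper intends: the corollary is stated without proof in the paper because it is an immediate consequence of the Existence Theorems \ref{thm:Existence4pinu} and \ref{thm:Existence4pi} (equivalently, of Corollary \ref{cor:Perturbative1} specialized to $\cW(\Si)=0$), using $f_\ep(\Si)$ as competitor and the $C^2$-continuity of $\ep\mapsto\cW(f_\ep(\Si))$ on the compact surface $\Si$. Your additional verification that $\si_\ep$ has the structural form required by Theorem \ref{thm:Existence4pinu} is appropriate bookkeeping and entails no new idea.
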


\begin{remark}
	Many examples in which the Existence Theorems \ref{thm:Existence4pinu} and \ref{thm:Existence4pi} and Corollary \ref{cor:Perturbative1} apply are given by defining the following boundary data. We can consider any compact smooth surface $S$ without boundary such that $\cW(S)<8\pi$. Then the monotonicity formula (see also \cite{KuSc} and \cite{LiYau}) implies that $S$ is embedded. We remark that there exist examples of such surfaces having any given genus (\cite{SiEX} and \cite{BaKu}). Considering any suitable plane $\pi$ that intersects $S$ in finitely many disjoint compact embedded curves $\ga^1,...,\ga^\alpha$, we get that one halfspace determined by $\pi$ contains a piece $\Si$ of $S$ with $\cW(\Si)<4\pi$ and $\pa\Si=\ga^1\cup...\cup\ga^\alpha$. Calling $co_\Si$ the conormal field of $\Si$ we get that problems
	\begin{equation*}
	\cP\,:=\, \min\left\{ \,\cW(V) \quad|\quad V=\bv(M,\te_V):\quad
	\si_V=co_\Si\cH^1\res\pa \Si,\quad \supp V\cup \pa\Si \,\, \mbox{ compact, connected } \right\},
	\end{equation*}
	\begin{equation*}
	\cQ\,:=\, \min\left\{ \,\cW(V) \quad|\quad V=\bv(M,\te_V):\quad
	|\si_V|\le \cH^1\res\pa\Si,\quad \supp V\cup \pa\Si \,\, \mbox{ compact, connected } \right\},
	\end{equation*}
	and suitably small perturbations $\cP_\ep$, $\cQ_\ep$ of them have minimizers.
\end{remark}

\begin{remark}
	Suppose that $\ga=\ga^1\cup...\cup\ga^\alpha$ is a disjoint union of compact smooth embedded $1$-dimensional manifolds and that $\ga$ is contained in some sphere $S^2_R(c)$. Up to translation let $c=0$. If there is a point $N\in S^2_R(0)$ such that for any $i$ the image $\pi_N(\ga^i)$ via the stereographic projection $\pi_N:S^2_R(0)\sm\{N\}\to\R^2$ is homotopic to a point in $\R^2\sm \cup_{i=1}^\alpha \pi_N(\ga^i)$, then the problem
	\begin{equation*}
	\cQ\,:=\, \min\left\{ \,\cW(V) \quad|\quad V=\bv(M,\te_V):\quad
	|\si_V|\le \cH^1\res\ga,\quad \supp V\cup \ga \,\, \mbox{ compact, connected } \right\},
	\end{equation*}
	has minimizers. In fact under such assumption there exists a connected submanifold $\Si$ of $S^2_R(0)$ with $\pa\Si=\ga$, thus $\cW(\Si)<4\pi$ and Theorem \ref{thm:Existence4pi} applies.
\end{remark}

\begin{remark}\label{rem:PerturbativeCatenoids}
	For given $R\ge1$ and $h>0$ consider the curves
	\[
	\Ga_{R,h}=\{x^2+y^2=1,z=h\}\cup \{x^2+y^2=R^2,z=-h\}.
	\]
	Suppose that $h_0>0$ is the critical value for which a connected minimal surface $\Si$ with $\pa \Si=\Ga_{R,h}$ exists if and only if $h\le h_0$. Let $\Si_0$ be a minimal surface with $\pa\Si_0=\Ga_{R,h_0}$. Applying Corollary \ref{cor:Perturbative2} we get that for $\ep>0$ sufficiently small the minimization problem
	\[
	\cQ_\ep:=\min\left\{  \,\cW(V) \quad|\quad V=\bv(M,\te_V):\quad
	|\si_V|\le \cH^1\res\Ga_{R,h_0+\ep},\quad \supp V\cup \Ga_{R,h_0+\ep} \,\, \mbox{ compact, connected }  \right\}
	\]
	has minimizers.\\
	Let us anticipate that in the case of boundary data of the form $\Ga_{R,h}$ we will see in Corollary \ref{cor:Ex/AsympDoubleCirc} that actually existence of minimizers for the problem $\cQ_\ep$ is guaranteed for any $\ep>0$.
\end{remark}

\bigskip

\section{Asymptotic regime: limits of rescalings}


\noindent As we recalled in Remark \ref{rem:Rappresentante}, it is proved in \cite{KuSc} that the infimum of the Willmore energy on closed surfaces coincide with the infimum taken over non-zero compact varifolds without boundary. First we prove that such infima are both achieved by spheres. This result is certainly expected by experts in the field, but up to the knowledge of the authors it has not been proved yet without appealing to highly non-trivial regularity theorems.

\begin{prop}\label{prop:Uniqueness}
	Let $V=\bv(M,\te_V)$ be an integer rectifiable varifold with $\si_V=0$ and such that $\supp V$ is compact. If $\cW(V)=4\pi$, then $V=\bv(S^2_R(z),1)$ for some $2$-sphere $S^2_R(z)\con\R^3$.
\end{prop}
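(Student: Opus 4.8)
\emph{Step 1: a pointwise identity.} The plan is to run the monotonicity formula \eqref{monot} at an arbitrary point and use the borderline case, in the spirit of \cite{KuSc} but tracking equality. Fix $p_0\in\R^3$; in \eqref{monot} one has $T_{p_0,\ro}=0$ since $\si_V=0$. Letting $\si\searrow0$: $\mu_V(B_\si(p_0))/\si^2\to\pi\,\te_V(p_0)$ by Remark \ref{rem:Rappresentante}, $\tfrac14\int_{B_\si(p_0)}|H|^2\,d\mu_V\to0$ since $\cW(V)<\infty$ and $\mu_V(\{p_0\})=0$, and $\int_{B_\si(p_0)}\langle\vec H,p-p_0\rangle\si^{-2}\,d\mu_V\to0$ by \eqref{eq10}. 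Letting $\ro\to\infty$: since $\supp V$ is compact, $\mu_V(B_\ro(p_0))/\ro^2\to0$, $\int_{B_\ro(p_0)}|H|^2\,d\mu_V\to\cW(V)$, and $|\int_{B_\ro(p_0)}\langle\vec H,p-p_0\rangle\ro^{-2}\,d\mu_V|\le \ro^{-2}\,\mathrm{diam}(\supp V\cup\{p_0\})\,\cW(V)^{1/2}\mu_V(\R^3)^{1/2}\to0$. Hence
\[
\pi\,\te_V(p_0)+\int_{\R^3}\bigg|\frac{\vec H}{2}+\frac{(p-p_0)^\perp}{|p-p_0|^2}\bigg|^2\,d\mu_V(p)=\frac14\cW(V)=\pi\qquad\forall\,p_0\in\R^3 .
\]

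\emph{Step 2: consequences of equality.} This forces $\te_V(p_0)\le1$ for every $p_0$. Since for the $2$-rectifiable set $M$ the density of $\cH^2\res M$ equals $1$ and the multiplicity is a positive integer at $\cH^2$-a.e.\ point, we get $\te_V=1$ $\cH^2$-a.e.\ on $M$, whence $\mu_V=\cH^2\res M$ and $V=\bv(M,1)$. Moreover, for $\cH^2$-a.e.\ $p_0\in M$ the integral above vanishes, i.e.
\[
\vec H(p)=-\,\frac{2\,(p-p_0)^\perp}{|p-p_0|^2}\qquad\text{for }\cH^2\text{-a.e. }p\in M .
\]
By Fubini this holds for $(\cH^2\!\times\!\cH^2)$-a.e.\ $(p,p_0)\in M\times M$, so for $\cH^2$-a.e.\ $p\in M$ it holds for $\cH^2$-a.e.\ $p_0\in M$.

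\emph{Step 3: the support lies on a sphere.} As $\cW(V)=4\pi>0$ we have $\vec H\not\equiv0$ ($\mu_V$-a.e.), so we may fix $p^*\in M$ at which $\vec H(p^*)\neq0$, the approximate tangent plane $T_{p^*}M$ exists with unit normal $\nu$, and the last relation of Step 2 holds for $\cH^2$-a.e.\ $p_0\in M$. Write $\vec H(p^*)=h\,\nu$, $h\neq0$. Projecting the relation onto $\nu$ and using $(p^*-p_0)^\perp=\langle p^*-p_0,\nu\rangle\,\nu$ gives $\tfrac{h}{2}|p_0-p^*|^2=\langle p_0-p^*,\nu\rangle$, which after completing the square reads $|p_0-c|=R$ with $R:=1/|\vec H(p^*)|$ and $c:=p^*+R^2\vec H(p^*)$. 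Thus $\cH^2$-a.e.\ point of $M$ lies on $S^2_R(c)$, and since $\mu_V=\cH^2\res M$ the (canonical, closed) support satisfies $M\con S^2_R(c)$.

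\emph{Step 4: $M$ is the full sphere.} Being a subset of positive $\cH^2$-measure of the smooth surface $S^2_R(c)$, $M$ has $T_pM=T_pS^2_R(c)$ at $\cH^2$-a.e.\ $p$. Inserting any $p_0\in S^2_R(c)$ (which covers $\cH^2$-a.e.\ $p_0\in M$) into the Step 2 relation and computing $(p-p_0)^\perp=\tfrac{p-c}{2R^2}$ yields $\vec H(p)=-\tfrac{p-c}{R^2}$ and $|\vec H(p)|=1/R$ for $\cH^2$-a.e.\ $p\in M$. Hence $4\pi=\cW(V)=\int_M|\vec H|^2\,d\mu_V=R^{-2}\cH^2(M)$, so $\cH^2(M)=4\pi R^2=\cH^2(S^2_R(c))$; since $M$ is closed, contained in $S^2_R(c)$, of full measure therein, and every nonempty relatively open subset of a sphere has positive area, we conclude $M=S^2_R(c)$ and $V=\bv(S^2_R(c),1)$. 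The delicate points are the two limit passages in Step 1 and the Fubini step of Steps 2--3, which upgrades an almost-everywhere Euler--Lagrange-type equality into the rigid spherical conclusion; the measure-theoretic normalizations ($\te_V\equiv1$ and identification of the canonical support) are routine.
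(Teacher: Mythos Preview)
Your proof is correct, and it takes a genuinely different---and more elementary---route than the paper's. Both arguments begin with the limiting monotonicity identity of Step~1 and the pointwise relation $\vec H(p)=-2(p-p_0)^\perp/|p-p_0|^2$. From there the paper proceeds by showing $\vec H\in L^\infty$, invoking Allard regularity to obtain a $C^{1,\alpha}$ closed surface, and then running a clever rotational-symmetry argument (each normal line is an axis of revolution) to pin down the sphere. You bypass both of these nontrivial ingredients: fixing one point $p^*$ where $\vec H(p^*)\neq0$ and rewriting the relation as a quadratic in $p_0$, you force $\cH^2$-a.e.\ point of $M$ onto a fixed sphere $S^2_R(c)$; then the identity $\cW(V)=R^{-2}\cH^2(M)$ gives full area and hence $M=S^2_R(c)$. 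This is strictly more elementary---no regularity theory, no geometric symmetry argument---and is noteworthy given the paper's own remark that the result ``has not been proved yet without appealing to highly non-trivial regularity theorems''. Two small points: in Step~4 the displayed formula should read $(p-p_0)^\perp/|p-p_0|^2=(p-c)/(2R^2)$ (a harmless typo, since your conclusion $\vec H(p)=-(p-c)/R^2$ is correct); and the assertion $\vec H(p^*)=h\nu$ need not be justified via Brakke-type perpendicularity, since the relation $\vec H(p^*)=-2(p^*-p_0)^\perp/|p^*-p_0|^2$ already forces $\vec H(p^*)\in(T_{p^*}M)^\perp$.
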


\begin{proof}
		Passing to the limits $\si\to0$ and $\ro\to\infty$ in the monotonicity formula for varifolds we get that
	\[
	4\pi\te_V(p_0)+ 4 \int_M \left|\frac{\vec{H}}{2} +\frac{(p-p_0)^\perp}{|p-p_0|^2} \right|^2\,d\mu_V = 4\pi,
	\]
	for any $p_0\in\R^3$. Hence $\te_V(p_0)=1$ for any $p_0\in M$, and also
	\begin{equation}\label{eq2}
	\vec{H}(p)=-2 \frac{(p-p_0)^\perp}{|p-p_0|^2},
	\end{equation}
	for $\cH^2$-ae $p\in M$ and for every $p_0\in M$.\\
	Fix $\de>0$ small and two points $p_1,p_2\in M$ with $p_2\not\in B_{2\de}(p_1)$. For $\cH^2$-ae $p\in M$ we can write
	\[
	\vec{H}(p)=\begin{cases}
	-2 \frac{(p-p_1)^\perp}{|p-p_1|^2} & p\not\in B_\de(p_1),\\
	-2 \frac{(p-p_2)^\perp}{|p-p_2|^2} &p\not\in B_\de(p_2).
	\end{cases}
	\]
	Since $M$ is bounded, we get that $\vec{H}\in L^\infty(\mu_V)$. Therefore, since $\te_V=1$ on $M$, by the Allard Regularity Theorem (\cite{SiGMT}) we get that $M$ is a closed surface of class $C^{1,\alpha}$ for any $\alpha\in(0,1)$.\\
	Since $M$ is closed, it is also compact, and thus it is connected, for otherwise $\cW(V)\ge8\pi$.\\
	\noindent Let $p\in M$ be any fixed point such that \eqref{eq2} holds, and call $\nu_p$ the unit vector such that $\nu_p^\perp=T_pM$. Up to translation let $p=0$. Consider the axis generated by $\nu_0$ and any point $p_0\in M\sm\{0\}$. We can write $p_0=q+w$ with $q=\alpha\nu_0$ and $\lgl w, \nu_0 \rgl =0$. Writing analogously $(q+w')\in M\sm\{0\}$ another point with the same component on the axis generated by $\nu_0$, \eqref{eq2} implies that
	\[
	-2\frac{-\lgl q, \nu_0\rgl \nu_0}{|q|^2+|w|^2}=-2\frac{(0-q-w)^{\perp_0}}{|q-w|^2}=\vec{H}(0)=-2\frac{(0-q-w')^{\perp_0}}{|q-w'|^2}=-2\frac{-\lgl q, \nu_0\rgl \nu_0}{|q|^2+|w'|^2}.
	\]
	Hence, whenever $q\ne0$, we have that $|w|=|w'|$; that is points in $M$ of the form $\alpha\nu_0 + w$ with $\alpha\neq0$ and $w\in\nu_0^\perp$ lie on a circle. It follows that $M$ is invariant under rotations about the axis $\{t\nu_0\,\,|\,\,t\in\R\}$.\\
	This argument works at $\cH^2$-almost any point of $M$. Therefore we have that for any $p\in M$, the set $M$ is invariant under rotations about the axis $p+\{t\nu_p\,\,|\,\,t\in\R\}$.\\
	Still assuming $0\in M$, up to rotation suppose that $\nu_0=(0,0,1)$. Let $a\in M$ be such that $\nu_a=(1,0,0)$. There exists a point $b\in M$ such that $b=t\nu_0=(0,0,t)$ for some $t\in\R\sm\{0\}$. We can write $0=q+w$ and $b=q+w'$ for the same $q\in a+\{t\nu_a\,\,|\,\,t\in\R\}$ and some $w,w'\in\nu_a^\perp$. Since $|w|=|w'|$, it follows that $q\neq0$, otherwise $b=0$. Since $q\neq0$, the rotation of the origin about the axis $a+\{t\nu_a\,\,|\,\,t\in\R\}$ implies that $M$ contains a circle $C$ of radius $r>0$ passing through the origin, and the plane containing $C$ is orthogonal to $\nu_0^\perp$. Since $M$ is of class $C^1$, the circle $C$ has to be tangent at $0$ to the subspace $\nu_0^\perp$. Thus by invariance with respect to the rotation about the axis $\{t\nu_0\,|\,t\in\R\}$, we have that $M$ contains the sphere with positive radius given by the rotation of $C$ about $\{t\nu_0\,|\,t\in\R\}$. Since the Willmore energy of a sphere is $4\pi$, it follows that $M$ coincide with such sphere.
\end{proof}

\noindent Now we can prove the above mentioned result on the asymptotic behavior of connected varifolds.

\begin{cor}\label{cor:ConvergenceSphere}
	Let $V_n=\bv(M_n,\te_{V_n})$ be a sequence of integer rectifiable curvature varifolds with boundary satisfying the hypotheses of Theorem \ref{thm:ConvergenceVarifolds}. Suppose that $M_n$ is compact and connected for any $n$.\\
	If
	\[
	\begin{split}
	&\cW(V_n)\le 4\pi +o(1) \qquad\mbox{as $n\to\infty$},\\
	& \diam (\supp V_n) \xrightarrow[n\to\infty]{}+\infty,\\
	&\limsup_n \frac{|\si_{V_n}|(\R^3)}{\diam (\supp V_n)}=0,
	\end{split}
	\]
	and $\supp\si_{V_n}$ is a disjoint union of uniformly finitely many compact embedded $1$-dimensional manifolds, then the sequence
	\[
	\tV_n:=\bv\left( \frac{M_n}{\diam  (\supp V_n) }, \tilde{\te}_n \right)
	\]
	where $\tilde{\te}_n(x)=\te_{V_n}(\diam  (\supp V_n) \, x)$, converges up to subsequence and translation to the varifold
	\[
	V=\bv(\S, 1),
	\]
	where $\S$ is a sphere of diameter $1$, in the sense of varifolds and in Hausdorff distance.
\end{cor}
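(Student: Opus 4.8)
The plan is to run a direct method on the rescaled sequence. Set $d_n:=\diam(\supp V_n)\to\infty$; after a translation (allowed by the statement) I assume $\supp V_n/d_n\con\overline{B_2(0)}$, so that $\supp\tV_n$ has diameter $1$. First I would record the scaling behaviour of the relevant quantities: the Willmore energy is scale invariant, so $\cW(\tV_n)=\cW(V_n)\le 4\pi+o(1)$; the generalized boundary is a $1$-dimensional object, so $|\si_{\tV_n}|(\R^3)=d_n^{-1}|\si_{V_n}|(\R^3)\to 0$ by the third hypothesis; and testing the first variation with $X(p)=\chi(p)\,p$, $\chi\equiv 1$ on $\overline{B_2(0)}$, exactly as in the proof of Theorem \ref{thm:Existence4pinu}, one obtains $2\mu_{\tV_n}(\R^3)\le C\,\mu_{\tV_n}(\R^3)^{1/2}+C$, hence a uniform mass bound. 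Thus the $\tV_n$ still satisfy the hypotheses of Theorem \ref{thm:ConvergenceVarifolds}, and after passing to a subsequence $\tV_n\to V=\bv(M,\te_V)$ as varifolds with $M\con\overline{B_2(0)}$ compact; since $|\si_{\tV_n}|(\R^3)\to0$ the limit has $\si_V=0$, and by lower semicontinuity $\cW(V)\le 4\pi$.

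The crucial point, and the one I expect to be the main obstacle, is to show $V\neq0$. Here I would argue as in the proof that the limiting varifold is non-trivial in Theorem \ref{thm:Existence4pinu}. Suppose $V=0$. Since $\diam(\supp\tV_n)=1$ and a uniform bound on the Willmore energy forbids a curvature varifold from being squeezed into an arbitrarily thin neighbourhood of its (uniformly finitely many) low-dimensional boundary components, one can select a point $a_n\in\supp\tV_n$ lying at a distance bounded below both from $\supp\si_{\tV_n}$ and from a second point $b_n\in\supp\tV_n$. Using that $\tilde M_n$ is connected one then finds, for any fixed $N$, points $z_{n,k}$ on $\tilde M_n$ on $N-1$ concentric spheres centred at $a_n$, with the balls $B_{\ro}(z_{n,k})$ pairwise disjoint and disjoint from $\supp\si_{\tV_n}$ (so that $T_{p_0,\ro}=0$ there); applying Young's inequality in the monotonicity formula \eqref{monot} at each $z_{n,k}$ as in \eqref{eq9}, and using $\mu_{\tV_n}(B_{\ro}(z_{n,k}))\to0$, one obtains $\pi(N-1)\le\tfrac34\limsup_n\cW(\tV_n)\le 3\pi+o(1)$, which is absurd for $N\ge 5$. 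The delicate part is the selection of $a_n$, which relies precisely on the fact that the support has diameter $1$ while its generalized boundary has vanishing mass, so that no long thin tendrils reaching the boundary curves are possible.

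Once $V\neq0$, Remark \ref{rem:Rappresentante} gives $\cW(V)\ge4\pi$, hence $\cW(V)=4\pi$, and Proposition \ref{prop:Uniqueness} forces $V=\bv(S^2_R(z),1)$ for some round sphere. To upgrade varifold convergence to Hausdorff convergence I would then apply Theorem \ref{thm:HausdorffDistance} (or repeat its proof): after passing to a further subsequence each rescaled boundary curve $\tilde\ga^i_n$ either collapses to a point (when $\cH^1(\tilde\ga^i_n)\to0$, by Golab's theorem) or converges in $d_\cH$ to a compact connected set, and since $\si_V=0$, $\supp\si_{\tV_n}\con\supp\mu_{\tV_n}$, and the density ratios of $\mu_{\tV_n}$ are controlled by \eqref{monot}, these limit sets are contained in $\supp V$; hence $\supp\tV_n=\tilde M_n\to\supp V$ in $d_\cH$ and $\supp V$ is connected. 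Finally, since Hausdorff convergence of compact sets preserves the diameter and $\diam(\supp\tV_n)=1$ for every $n$, the set $\S:=\supp V$ is a sphere of diameter $1$ and $V=\bv(\S,1)$, which is the claim.
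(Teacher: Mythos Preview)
Your proposal is correct and follows essentially the same route as the paper: rescale, get compactness via the first-variation mass bound and Theorem~\ref{thm:ConvergenceVarifolds}, show $\si_V=0$ and $\cW(V)\le4\pi$, prove $V\neq0$ by the concentric-spheres/monotonicity contradiction, invoke Proposition~\ref{prop:Uniqueness} and Theorem~\ref{thm:HausdorffDistance}, and read off the diameter. The only notable difference is in the $V\neq0$ step: the paper first uses Gołąb's theorem to show the rescaled boundary curves collapse to finitely many points $p_i$ (which can be assumed $\neq0$), then centers the concentric spheres at the fixed point $0\in\supp\tV_n$ and \emph{estimates} the boundary term in \eqref{monot} by $C(\de(N),N)\,|\si_{\tV_n}|(\R^3)\to0$ rather than arranging the balls to miss $\supp\si_{\tV_n}$ entirely; your choice of a moving center $a_n$ and the avoidance argument works as well but is slightly less direct.
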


\begin{proof}
	Up to translation let us assume that $0\in \supp V_n$. Then $\supp\tV_n$ is uniformly bounded with $\diam(\supp\tV_n)=1$. We have that
	\[
	2\mu_{\tV_n}(\R^3)= \int \div_{T\tV_n} p\,d\mu_{\tV_n}(p) \le C \cW(\tV_n)^{\frac12}\left( \mu_{\tV_n}(\R^3)\right)^{\frac12} + C \frac{|\si_{V_n}|(\R^3)}{\diam (\supp V_n)},
	\]
	and thus Theorem \ref{thm:ConvergenceVarifolds} implies that $\tV_n$ converges to a limit varifold $V$ (up to subsequence). Also $\si_{\tV_n}\wsc\si_V$, and thus $|\si_V|(\R^3)\le \liminf_n |\si_{\tV_n}|(\R^3)\le \limsup_n \frac{|\si_{V_n}|(\R^3)}{\diam (\supp V_n)}=0$; hence $V$ has compact support and no generalized boundary.\\
	Let us say that $\supp \si_{\tV_n}$ is the disjoint union of the smooth closed curves $\ga^1_n,...,\ga^\alpha_n$. By the uniform boundedness of $\supp\tV_n$, we get that $\ga^i_n$ converges to some compact set $X^i$ in $d_\cH$ up to subsequence. Each $X_i$ is connected by Lemma \ref{lem:HausdorffConnectedness}, then by Golab Theorem we know that $\cH^1(X^i)\le \liminf_n \cH^1(\ga^i_n)=0$, hence $X^i=\{p_i\}$ for any $i$ for some points $p_1,...,p_\alpha$, and we can assume that $p_i\neq0$ for any $i=1,...,\alpha$.\\
	Using ideas from the proof of Theorem \ref{thm:HausdorffDistance}, we can show that $V\neq 0$.	In fact suppose by contradiction that $V=0$. Fix $N\in \N$ with $N\ge4$. By connectedness of $M_n$, since $\diam(\supp\tV_n)\to1$, and the boundary curves converge to a discrete sets, for $j=1,...,N$ there are points $z_{n,j}\in \pa B_{\frac{j}{2N}} (0)\cap \supp\tV_n$ for $n$ large. We can also choose $N$ so that $d(z_{n,j},\supp\si_{\tV_n})\ge\de(N)>0$ for $n$ large. The open balls $\left\{ B_{\frac{1}{4N}}(z_{n,j}) \right\}_{j=1}^N$ are pairwise disjoint. Using Young inequality as in Theorem \ref{thm:HausdorffDistance} in the monotonicity formula \eqref{monot} applied on $\tV_n$ at points $z_{n,j}$ with $\si\to0$ and $\ro=\frac{1}{4N}$ gives
	\begin{equation}\label{eq18}
	\pi \le \frac32\frac{\mu_{\tV_n}(B_{\frac{1}{4N}}(z_{n,j}))}{\left(\frac{1}{4N}\right)^2} + \frac34 \int_{B_{\frac{1}{4N}}(z_{n,j})} |H_{\tV_n}|^2\,d\mu_{\tV_n}+\frac12 \left| \int \left( \frac{1}{|p-z_{n,j}|^2}-\frac{1}{\left(\frac{1}{4N}\right)^2} \right)(p-z_{n,j})\,d\si_{\tV_n}(p)\right|,
	\end{equation}
	for any $n$ and $j=1,...,N$. Since $V=0$ we have that $\limsup_n \mu_{\tV_n}(B_{\frac{1}{4N}}(z_{n,j})) \le \limsup_n \mu_{\tV_n}(\overline{B_2(0)})=0 $. Also
	\[
	\left| \int \left( \frac{1}{|p-z_{n,j}|^2}-\frac{1}{\left(\frac{1}{4N}\right)^2} \right)(p-z_{n,j})\,d\si_{\tV_n}(p) \right|\le C(\de(N),N) |\si_{\tV_n}|(\R^3)\xrightarrow[n\to\infty]{}0.
	\]
	Hence summing on $j=1,...,N$ in \eqref{eq18} and passing to the limit $n\to\infty$ we get
	\[
	4\pi\le N\pi \le \frac34\lim_n \cW(\tV_n)\le 3\pi,
	\]
	that gives a contradiction.\\
	Therefore we can apply Theorem \ref{thm:HausdorffDistance} to conclude that $\supp\tV_n$ converges to $M$ in $d_\cH$. Finally, since $V$ is a compact varifold without generalized boundary and
	\[
	4\pi\le \cW(V)\le\liminf_n \cW(V_n)=4\pi,
	\]
	by Proposition \ref{prop:Uniqueness} we conclude that $V$ is a round sphere of multiplicity $1$. By Lemma \ref{lem:EquivalenceHausdorff} the diameter of $M$ is the limit $\lim_n \diam(\supp\tV_n)=1$.	
\end{proof}

%

\bigskip

\section{The double circle boundary}

\noindent In this section we want to discuss how the Existence Theorems \ref{thm:Existence4pinu} and \ref{thm:Existence4pi} and the asymptotic behavior described in Corollary \ref{cor:ConvergenceSphere} relate with the remarkable case that motivates our study, namely the immersions in the class $\sF_{R,h}$.\\

\noindent First, the monotonicity formula provides the following estimates on immersions $\vp\in\sF_{R,h}$.

\begin{lemma}\label{lem:StimeOvvie}
	Fix $R\ge1$ and $h>0$. It holds that:\\
	i)
	\begin{equation}
	\inf \big\{ \cW(\vp)\,\,|\,\, \vp\in \sF_{R,h}\big\}\le 4\pi \frac{4h^2+R^2-1}{\sqrt{(4h^2+R^2-1)^2+16h^2}}<4\pi.
	\end{equation}
	ii)
	\begin{equation}
	\lim_{h\to\infty} \inf \big\{  \cW(\vp)\,\,|\,\, \vp\in \sF_{R,h}\big\} = 4\pi.
	\end{equation}
\end{lemma}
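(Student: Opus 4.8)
The plan is to prove (i) by exhibiting a piece of a round sphere as competitor, and (ii) by matching the resulting upper bound with a lower bound read off from the monotonicity formula \eqref{monot}. For (i): the two circles forming $\Ga_{R,h}$ are coaxial, so there is exactly one sphere $S$ with centre on the $z$-axis passing through both of them; writing its centre $(0,0,c)$ and radius $\ro$, the conditions $1+(h-c)^2=\ro^2=R^2+(h+c)^2$ force $c=(1-R^2)/(4h)\le 0$ and $\ro^2=\big((4h^2+R^2-1)^2+16h^2\big)/(16h^2)$. From $\ro^2=1+(h-c)^2>(h-c)^2$ and $\ro^2=R^2+(h+c)^2>(h+c)^2$ we get $\ro>|h-c|$ and $\ro>|h+c|$, so both poles $z=c\pm\ro$ of $S$ lie strictly outside the slab $\{-h\le z\le h\}$; hence $\Si^\star:=S\cap\{-h\le z\le h\}$ is a smoothly embedded compact annulus with $\pa\Si^\star=\Ga_{R,h}$, and composing a diffeomorphism $\sC\to\Si^\star$ with the inclusion $\Si^\star\hookrightarrow\R^3$ produces an element of $\sF_{R,h}$. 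A sphere of radius $\ro$ has $|\vec H|\equiv\ro^{-1}$, and by Archimedes' theorem the spherical zone between two parallel planes at distance $2h$ has area $4\pi\ro h$, so $\cW(\Si^\star)=\ro^{-2}\cH^2(\Si^\star)=4\pi h/\ro$. Since $c\le 0$ this is $\le 4\pi(h-c)/\ro$, which upon substituting $h-c=(4h^2+R^2-1)/(4h)$ and the value of $\ro$ equals exactly $4\pi\,(4h^2+R^2-1)/\sqrt{(4h^2+R^2-1)^2+16h^2}$; and the latter is $<4\pi$ because $16h^2>0$. This proves (i).

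For (ii): the upper bound just obtained tends to $4\pi$ as $h\to\infty$, giving $\limsup_{h\to\infty}\inf\{\cW(\vp):\vp\in\sF_{R,h}\}\le 4\pi$. For the matching lower bound, let $\vp\in\sF_{R,h}$ and denote by $\Si$ the immersed surface $\vp(\sC)$. As $\sC$ is connected, the continuous function $z\circ\vp$ attains every value in $[-h,h]$, so $\Si$ has an interior point $p_0$ with $z(p_0)=0$; since $\pa\Si=\Ga_{R,h}\con\{|z|=h\}$ we have $d(p_0,\pa\Si)\ge h$. Applying \eqref{3} of Lemma~\ref{lem:MonotonicityEstimates} at $p_0$ and using that its left-hand side is $\ge 4\pi$ (the area density of an immersed surface at one of its points is at least $1$, and the curvature integral is nonnegative), together with $\cH^1(\pa\Si)=\cH^1(\Ga_{R,h})=2\pi(1+R)$ and $d(p_0,\pa\Si)\ge h$, we obtain $4\pi\le\cW(\vp)+4\pi(1+R)/h$. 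Hence $\inf\{\cW(\vp):\vp\in\sF_{R,h}\}\ge 4\pi-4\pi(1+R)/h\to 4\pi$ as $h\to\infty$, which combined with the upper bound proves (ii).

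Part (i) is essentially a computation once the correct sphere is singled out; the one point deserving care is the verification that $\Si^\star$ is a genuine embedded, non-degenerate annulus (which is exactly what the computation on the location of the poles guarantees), so that it legitimately belongs to $\sF_{R,h}$. In (ii) the only mildly non-obvious step is exploiting connectedness to locate a point of the surface at distance at least $h$ from the boundary curves; everything else is a direct application of the monotonicity estimates already established above.
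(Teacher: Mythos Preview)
Your proof is correct and follows essentially the same approach as the paper: the same sphere through $\Ga_{R,h}$ as competitor in (i), and the same monotonicity lower bound at a point on $\{z=0\}$ in (ii). The only cosmetic difference is that you compute $\cW(\Si^\star)=4\pi h/\ro$ directly via Archimedes and then bound it by $4\pi(h-c)/\ro$, whereas the paper reaches the same bound by enclosing $\Si$ in the larger symmetric zone $\Si'$ of height $2(h-c)$; and you invoke \eqref{3} rather than \eqref{2}, which amounts to the same thing here.
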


\begin{proof}
	$i)$ We can consider as competitor in $\sF_{R,h}$ the truncated sphere
	\[
	\Si=S^2_{\sqrt{1+(z_0-h)^2}}\left(z_0 \right) \cap \left\{|z|\le h\right\},
	\]
	where $z_0=\left(0,0,\frac{1-R^2}{4h}\right)$ is the point on the $z$-axis located at the same distance from the two connected components of $\Ga_{R,h}$. The surface $\Si$ is contained in another truncated sphere $\Si'$ having the same center and radius and symmetric with respect to the plane $\{z=\frac{1-R^2}{4h}\}$. The boundary of $\Si'$ is the disjoint union of two circles of radius $1$. We have
	\[
	\cW(\Si)\le\cW(\Si')=4\pi \frac{4h^2+R^2-1}{\sqrt{(4h^2+R^2-1)^2+16h^2}}
	\]
	$ii)$ Let $\vp\in\sF_{R,h}$ and $\Si=\vp(\sC)$. By connectedness there is a point $p\in\Si\sm\pa\Si$ lying in the plane $z=0$. Hence $d_\cH(\Si,\pa\Si)\ge h$, and by \eqref{2} we have
	\begin{equation*}
	4\pi\le \cW(\Si) + 2 \frac{2\pi(1+R)}{h} \qquad\forall\Si.
	\end{equation*}
	Then $4\pi\le \inf \big\{  \cW(\vp)\,\,|\,\, \vp\in \sF_{R,h}\big\} + \frac{4\pi(1+R)}{h}$ and the thesis follows by using $i)$ by letting $h\to\infty$.\\
\end{proof}

\noindent We already discussed in Remark \ref{rem:PerturbativeCatenoids} the existence of minimization problems arising by perturbations of minimal catenoids in some $\sF_{R,h}$. By Lemma \ref{lem:StimeOvvie} we can complete the picture about existence of optimal connected elastic surfaces with boundary $\Ga_{R,h}$ for any $R\ge1$ and $h>0$, as well as the asymptotic behavior of almost optimal surfaces having such boundaries.

\begin{cor}\label{cor:Ex/AsympDoubleCirc}
	Fix $R\ge1$ and $h>0$.\\
	1) Then the minimization problem
	\[
	\cQ_{R,h}:= \min\left\{  \,\cW(V) \quad|\quad V=\bv(M,\te_V):\quad
	|\si_V|\le \cH^1\res\Ga_{R,h},\quad \supp V\cup \Ga_{R,h} \,\, \mbox{ compact, connected }  \right\}
	\]
	has minimizers.\\
	2) Let $h_k\to\infty$ be any sequence. Let $\Si_k=\vp_k(\sC)$ for $\vp_k\in\sF_{R,h_k}$. Suppose that $\cW(\vp_k)\le 4\pi+o(1)$ as $k\to\infty$. Let $S_k=\frac{\Si_k}{\diam\Si_k}$.\\
	Then (up to subsequence) $S_k$ converges in Hausdorff distance to a sphere $\S$ of diameter $1$, and the varifolds corresponding to $S_k$ converge to $V=\bv(\S,1)$ in the sense of varifolds.
\end{cor}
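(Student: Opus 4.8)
The plan is to obtain part 1) as a direct application of the Existence Theorem~\ref{thm:Existence4pi} and part 2) as a direct application of Corollary~\ref{cor:ConvergenceSphere}, in both cases feeding in the energy estimates of Lemma~\ref{lem:StimeOvvie}.

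\textbf{Part 1).} First I would exhibit an admissible competitor for $\cQ_{R,h}$ with Willmore energy strictly below $4\pi$. By Lemma~\ref{lem:StimeOvvie}~$i)$ there is $\vp\in\sF_{R,h}$ with $\cW(\vp)<4\pi$. The smooth immersion $\vp$ induces an integer rectifiable curvature varifold with boundary $V_\vp=\bv(M_\vp,\te_{V_\vp})$ with generalized boundary $\si_{V_\vp}=co_\vp\,\cH^1\res\Ga_{R,h}$, so that $|\si_{V_\vp}|=\cH^1\res\Ga_{R,h}$, while $\supp V_\vp\cup\Ga_{R,h}=\vp(\sC)$ is compact and connected as a continuous image of $\sC$. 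Hence $V_\vp$ is admissible in $\cQ_{R,h}$ and $\inf\cQ_{R,h}\le\cW(\vp)<4\pi$. Since $\Ga_{R,h}$ is a disjoint union of $\alpha=2\ge2$ smooth embedded compact $1$-dimensional manifolds and the multiplicity $m\equiv1$ lies in $L^\infty(\cH^1\res\Ga_{R,h})$, Theorem~\ref{thm:Existence4pi} applies and yields a minimizer of $\cQ_{R,h}$.

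\textbf{Part 2).} I would let $V_k$ be the curvature varifold with boundary induced by $\vp_k$, so that $\supp V_k=\Si_k$ and $\diam(\supp V_k)=\diam\Si_k$, with $\Si_k$ compact and connected, and then check the hypotheses of Corollary~\ref{cor:ConvergenceSphere}. One has $\cW(V_k)=\cW(\vp_k)\le 4\pi+o(1)$ by assumption. Since $\Ga_{R,h_k}\con\Si_k$ and the two circles of $\Ga_{R,h_k}$ lie at distance $2h_k$, one gets $\diam(\supp V_k)\ge 2h_k\to+\infty$. Moreover $\si_{V_k}=co_{\vp_k}\,\cH^1\res\Ga_{R,h_k}$ has total mass $|\si_{V_k}|(\R^3)=\cH^1(\Ga_{R,h_k})=2\pi(1+R)$, independent of $k$, whence $|\si_{V_k}|(\R^3)/\diam(\supp V_k)\le \pi(1+R)/h_k\to 0$. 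Finally $\supp\si_{V_k}=\Ga_{R,h_k}$ is a disjoint union of exactly two compact embedded $1$-dimensional manifolds, so the boundary supports are uniformly finitely many, and the uniform bounds required by Theorem~\ref{thm:ConvergenceVarifolds} are available for the smooth immersions $\vp_k$ exactly as in the proof of Corollary~\ref{cor:ConvergenceSphere} (the area bound for the rescalings following from the divergence identity). Corollary~\ref{cor:ConvergenceSphere} then gives that $\tV_k:=\bv(\Si_k/\diam\Si_k,\tilde\te_k)$, with $\tilde\te_k(x)=\te_{V_k}(\diam\Si_k\,x)$, converges, up to subsequence and translation, to $\bv(\S,1)$ with $\S$ a sphere of diameter $1$, both in the sense of varifolds and in Hausdorff distance; since $S_k=\Si_k/\diam\Si_k=\supp\tV_k$ and $\tV_k$ is precisely the varifold associated to $S_k$, this is exactly the assertion.

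The statement is essentially a repackaging of results already established, so I do not expect a genuine obstacle. The only points deserving attention are checking that a smooth immersion of $\sF_{R,h}$ really defines an integer rectifiable curvature varifold with boundary with generalized boundary $co_\vp\,\cH^1\res\Ga_{R,h}$ (via the standard correspondence recalled in Appendix~A, with non-embedded immersions counted with multiplicity), and observing that the assumption $h_k\to\infty$ is exactly what forces the rescaled boundary mass to vanish while the diameter diverges, which is the only nontrivial hypothesis of Corollary~\ref{cor:ConvergenceSphere} in this setting.
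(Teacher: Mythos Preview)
Your approach is essentially the same as the paper's: Lemma~\ref{lem:StimeOvvie}~$i)$ feeds into the existence theorem for part~1), and Corollary~\ref{cor:ConvergenceSphere} handles part~2). One point to tighten: in part~2) you claim the hypotheses of Theorem~\ref{thm:ConvergenceVarifolds} are available ``exactly as in the proof of Corollary~\ref{cor:ConvergenceSphere}'', but that corollary \emph{assumes} those hypotheses rather than deriving them, and the divergence identity only gives the mass bound, not the uniform $L^2$ bound on the second fundamental form. The paper closes this by invoking the Gauss--Bonnet theorem on the cylinder $\sC$: since $\int_{\Si_k}|\sff|^2 = 4\cW(\vp_k) - 2\int_{\Si_k}K$ and $\int_{\Si_k}K$ is controlled by $\chi(\sC)$ and the boundary geodesic curvature (which is uniformly bounded for the circles in $\Ga_{R,h_k}$), one gets $\sup_k\int_{\Si_k}|\sff|^2<\infty$, and this is scale invariant. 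With that one line added your argument is complete and matches the paper.
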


\begin{proof}
	1) The result follows by point $i)$ in Lemma \ref{lem:StimeOvvie} by applying Corollary \ref{cor:Perturbative1}.\\
	2) Identifying $S_k$ with the varifold it defines, we estimate the total variation of the boundary measure by $|\pa S_k|\le \frac{\cH^1(\Ga_{R,h_k})}{\diam\Si_k}$. Moreover, by the Gauss-Bonnet Theorem the $L^2$-norm of the second fundamental form of $S_k$ is uniformly bounded. Hence Corollary \ref{cor:ConvergenceSphere} applies and the thesis follows.
\end{proof}

\noindent Using the notation of point $2)$ in Corollary \ref{cor:Ex/AsympDoubleCirc}, we remark that even if we know that the rescalings $S_k$ converge to a sphere in $d_\cH$ and as varifolds, it remains open the question whether at a scale of order $h$ the sequence $\Si_k$ approximate a big sphere. More precisely it seems a delicate issue to understand if $\diam\Si_k \sim 2h_k$ as $k\to\infty$.\\
We conclude with the following partial result: the monotonicity formula gives us some evidence in the case we assume that $\frac{\diam \Si_k}{h_k}\to\infty$.

\begin{prop}\label{prop:ConvergenzaPiani}
	Let $\Si_k=\vp_k(\sC)$ for $\vp_k\in\sF_{R,h_k}$. Suppose that $\cW(\vp_k)\le 4\pi+o(1)$ as $k\to\infty$. Let $M_k=\frac{\Si_k}{h_k}$.\\
	Then $M_k$ converges up to subsequence to $Z=\bv(M,\te_Z)$ in the sense of varifolds.\\
	If also
	\[
	\frac{\diam \Si_k}{h_k}\to\infty,
	\]
	then $M$ is a plane containing the $z$-axis and $\te_Z\equiv1$.
\end{prop}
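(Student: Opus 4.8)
The plan is to argue by compactness and then to identify the limit varifold through the monotonicity formula \eqref{monot}.

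\emph{Convergence.} Writing $\si_{M_k}=co_{M_k}\,\cH^1\res(\Ga_{R,h_k}/h_k)$, one has $|\si_{M_k}|(\R^3)=\tfrac{2\pi(1+R)}{h_k}\to0$ and $\pa M_k\con\overline{B_2(0)}$ for $k$ large. I would apply \eqref{monot} at a fixed point $\bar p_0$ with $d(\bar p_0,\overline{B_2(0)})\ge1$, use that $\rho\mapsto A_{M_k}(\rho)$ is bounded above by its limit as $\rho\to\infty$ (which, since $M_k$ is smooth and compact, equals $\tfrac14\cW(M_k)+\tfrac12\int_{\pa M_k}\lgl\tfrac{p-\bar p_0}{|p-\bar p_0|^2},co\rgl$), and solve the resulting quadratic inequality for $\mu_{M_k}(B_\rho(\bar p_0))/\rho^2$ to obtain $\mu_{M_k}(B_\rho(\bar p_0))\le C\rho^2$ with $C$ independent of $k$. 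Thus masses and first variations are uniformly locally bounded, and the compactness theorem for curvature varifolds with boundary (Appendix A) gives a subsequence with $M_k\to Z=\bv(M,\te_Z)$ as varifolds, $\si_{M_k}\wsc\si_Z$ hence $\si_Z=0$, and $\cW(Z)\le\liminf_k\cW(M_k)\le4\pi$ by lower semicontinuity. This is the first assertion.

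\emph{The limit is minimal.} Assume now $\diam M_k\to\infty$. Since $\diam(\pa M_k)\le2+o(1)$, the number $D_k:=\max_{p\in M_k}d(p,\pa M_k)$ must diverge (otherwise $M_k\con\cN_{D_k}(\pa M_k)$ would have bounded diameter); I pick $q_k\in M_k\sm\pa M_k$ realizing it, so $|q_k|\to\infty$ as well. Feeding $q_k$ into \eqref{3} of Lemma \ref{lem:MonotonicityEstimates} (the density of $M_k$ at $q_k$ is $\ge\pi$) gives
\[
4\pi+4\int_{M_k}\Big|\frac{\vec H_{M_k}}{2}+\frac{(p-q_k)^\perp}{|p-q_k|^2}\Big|^2\,d\mu_{M_k}\le\cW(M_k)+2\,\frac{\cH^1(\pa M_k)}{D_k}\le4\pi+o(1),
\]
so $\int_{M_k}\big|\tfrac{\vec H_{M_k}}{2}+\tfrac{(p-q_k)^\perp}{|p-q_k|^2}\big|^2\,d\mu_{M_k}\to0$. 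Since $|q_k|\to\infty$, on any fixed ball $B_\rho(p_0)$ the vector $\tfrac{(p-q_k)^\perp}{|p-q_k|^2}$ is uniformly $O(1/|q_k|)$, and with the crude mass bound this forces $\int_{M_k\cap B_\rho(p_0)}|\vec H_{M_k}|^2\,d\mu_{M_k}\to0$ for every ball. Hence $\cW(Z)=\int|\vec H_Z|^2\,d\mu_Z=0$ by lower semicontinuity on open sets, and since $\si_Z=0$ the varifold $Z$ is stationary.

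\emph{The limit is a plane through the $z$-axis.} Returning to \eqref{monot} at a fixed $p_0\notin\{(0,0,1),(0,0,-1)\}$, $A_{M_k}(\rho)$ is bounded above by $\tfrac14\cW(M_k)+\tfrac12\int_{\pa M_k}\lgl\tfrac{p-p_0}{|p-p_0|^2},co\rgl$, and now that $\int_{B_\rho(p_0)}|\vec H_{M_k}|^2\to0$ the remainder $R_{p_0,\rho}$ also tends to $0$ (the curvature part by Cauchy--Schwarz and the crude mass bound, the boundary part $T_{p_0,\rho}$ because $|\si_{M_k}|(\R^3)\to0$ while $d(p_0,\pa M_k)$ stays bounded away from $0$). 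Passing to the limit yields the sharp bound $\mu_Z(B_\rho(p_0))\le\pi\rho^2$ for every $p_0$ and $\rho>0$. By the stationary monotonicity formula and the Allard density lower bound on $\supp Z$ this gives $\te_Z\equiv1$ on $M$; Allard's regularity theorem then makes $M$ a smooth embedded minimal surface whose monotone area ratios $\mu_Z(B_\rho(p_0))/(\pi\rho^2)$, being $\ge1$ and $\le1$, are identically $1$, so the equality case forces $M$ to be a cone from each of its points, hence a plane (with $\te_Z\equiv1$). Finally, the disjoint balls estimate \eqref{eq9}, applied around $(0,0,\pm1)$ as in the proof of Theorem \ref{thm:HausdorffDistance} — using that $M_k$ is connected, contains the shrinking circles $\Ga_{1,h_k}/h_k$ and $\Ga_{R,h_k}/h_k$, and has $\cW(M_k)\le4\pi+o(1)$ — shows $(0,0,\pm1)\in\supp\mu_Z=M$, so in particular $Z\ne0$ and the plane $M$ contains the $z$-axis.

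\emph{Main difficulty.} The crux is the sharp mass estimate $\mu_Z(B_\rho(p_0))\le\pi\rho^2$: it requires a two step bootstrap — a crude bound from \eqref{monot} is needed first merely to control the remainder terms, and only the vanishing of $\int_{M_k\cap B_\rho(p_0)}|\vec H_{M_k}|^2$ on compact sets upgrades the constant to the optimal $\pi$ — and one must carefully track the boundary remainder $T_{p_0,\rho}$, which is present for $M_k$ but disappears in the limit since $\si_Z=0$. Proving that the farthest point $q_k$ escapes to infinity (so that the vanishing integral $\int_{M_k}|\tfrac{\vec H_{M_k}}{2}+\tfrac{(p-q_k)^\perp}{|p-q_k|^2}|^2$ localizes to control of $|\vec H_{M_k}|^2$ on balls) is where the assumption $\diam M_k\to\infty$ is used essentially, and connectedness of $M_k$ enters only in the final step to place $(0,0,\pm1)$ on $M$.
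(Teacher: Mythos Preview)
Your argument is correct and reaches the same conclusion as the paper's, but the route to stationarity of $Z$ and to the sharp density bound is genuinely different.

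The paper first shows that $\supp Z$ is unbounded (via connectedness of $M_k$ and Remark~\ref{rem:Supporto}), invokes Corollary~\ref{cor:StimaUnboundedSupport} to obtain the lower bound $\lim_{\rho\to\infty}\mu_Z(B_\rho)/\rho^2\ge\pi$, and then squeezes this against the upper bound $\lim_{\sigma\to\infty}A_Z(\sigma)\le\pi$ (coming from $\liminf_k A_{M_k}(\sigma)\le\pi$) to force $\cW(Z)=0$ and equality of the area ratio at infinity. You instead pick the point $q_k\in M_k$ farthest from $\partial M_k$, observe $|q_k|\to\infty$ from $\diam M_k\to\infty$, and feed it into \eqref{3} to get $\int_{M_k}|\tfrac{\vec H_{M_k}}{2}+\tfrac{(p-q_k)^\perp}{|p-q_k|^2}|^2\to 0$; since the second term is $O(1/|q_k|)$ on any fixed ball, this directly gives $\int_{B_\rho(p_0)}|\vec H_{M_k}|^2\to 0$ for every ball, hence $Z$ is stationary. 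With this in hand, the monotonicity remainder $R_{p_0,\rho}$ vanishes in the limit and you read off the \emph{sharp} inequality $\mu_Z(B_\rho(p_0))\le\pi\rho^2$, which together with $\theta_Z\ge 1$ on $\supp Z$ immediately pins the area ratio to $1$ and forces the plane conclusion. Your approach is more direct and avoids both the unboundedness step and Corollary~\ref{cor:StimaUnboundedSupport}; the paper's approach, on the other hand, does not need to localize the curvature integral and stays entirely at the level of the limit varifold. One small remark: the compactness you cite from Appendix~A requires a bound on $\int|\sff_{M_k}|^2$, which is not part of the hypotheses; since you later prove $\int_{B_\rho}|\vec H_{M_k}|^2\to 0$ anyway, you could instead invoke Allard's compactness (locally bounded mass and first variation suffice) and conclude stationarity of $Z$ directly from the vanishing of the local curvature integrals, bypassing lower semicontinuity of $\cW$ entirely.
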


\begin{proof}
	We identify $M_k$ with the varifold it defines. First we can establish the convergence up to subsequence in the sense of varifolds by using Theorem \ref{thm:ConvergenceVarifolds}. In fact we have that $\cH^1(\pa M_k)\to0$, $\int_{M_k} |\sff_{M_k}|^2$ is scaling invariant and thus finite. 
	Moreover, since $d(0,\pa M_k)\ge1$, by monotonicity \eqref{monot} we get that
	\[
	\begin{split}
		\frac{\mu_{M_k}(B_\si(0))}{\si^2} &\le -\frac{1}{\si^2}\int_{B_\si(0)} \lgl H_{M_k}, p\rgl\,d\mu_{M_k}(p)-\frac12 \int_{B_\si(0)\cap \pa M_k} \left(\frac{1}{|p|^2}-\frac{1}{\si^2}\right)\lgl p,co_{M_k}(p)\rgl \, d\cH^1(p)  \\&\qquad+\lim_{\ro\to\infty} A_{M_k}(\ro) \\
		&\le \pi+o(1) +\frac{1}{\si^2}\int_{B_\si(0)} |p||H_{M_k}|\,d\mu_{M_k}(p) +\frac12 \int_{\pa M_k\sm B_\si(0)} \frac{d\cH^1(p)}{|p|} \\&\qquad+\frac{1}{2\si^2}\int_{\pa M_k \cap B_\si(0)} |p|\,d\cH^1(p)\\
		&\le \pi+o(1) +\frac1\si \mu_{M_k}(B_\si(0))^{\frac12} \cW(M_k)^{\frac12} +\frac12 \cH^1(\pa M_k) + \frac{1}{2\si}\cH^1(\pa M_k),
	\end{split}
	\]
	where $A_{M_k}(\cdot)$ is the monotone quantity centered at $0$ evaluated on $M_k$, and therefore $\mu_{M_k}(B_\si(0))\le C(\si)$ for any $\si\ge1$. Hence the hypotheses of Theorem \ref{thm:ConvergenceVarifolds} are satisfied and we call $Z=\bv(M,\te_Z)$ the limit varifold of $M_k$. Observe that $\si_Z=0$ and $\cW(Z)<+\infty$.\\
	From now on assume that $\diam\Si_k/h_k\to\infty$. Arguing as in the proof of Corollary \ref{cor:ConvergenceSphere} we can prove that $Z\neq 0$. In fact suppose by contradiction that $Z=0$. Fix $N\in \N$ with $N\ge4$. By connectedness of $M_k$, for $j=1,...,N$ there are points $z_{k,j}\in \pa B_{\frac{j}{N}} (0,0,1)\cap M_k $ and $z_{k,j}\not\in \pa M_k$ for $k$ large. The open balls $\left\{ B_{\frac{1}{2N}}(z_{k,j}) \right\}_{j=1}^N$ are pairwise disjoint. Hence the monotonicity formula \eqref{monot} applied on $M_k$ at points $z_{k,j}$ with $\si\to0$ and $\ro=\frac{1}{2N}$ gives
	\begin{equation}\label{eq16}
	\pi \le \frac32\frac{\mu_{M_k}(B_{\frac{1}{2N}}(z_{k,j}))}{\left(\frac{1}{2N}\right)^2} + \frac34 \int_{B_{\frac{1}{2N}}(z_{k,j})} |H_{M_k}|^2\,d\mu_{M_k},
	\end{equation}
	for any $k$ and $j=1,...,N$. Since $Z=0$ we have that $$\limsup_k \mu_{M_k}(B_{\frac{1}{2N}}(z_{k,j})) \le \limsup_k \mu_{M_k}(B_{2}(0,0,1))=0.$$ 
	Hence, summing on $j=1,...,N$ in \eqref{eq16} and passing to the limit $k\to\infty$ we get
	\[
	4\pi\le N\pi \le \frac34\lim_k \cW(M_k)\le 3\pi,
	\]
	that gives a contradiction.\\
	Also the support of $Z$ is unbounded. In fact suppose by contradiction that $\supp Z\con\con B_R(0)$, and thus $M$ is closed by Proposition \ref{prop:StructureProperties}. Since $M_k$ is connected, there exists $q_k'\in M_k\cap \pa B_{2R}(0)$ definitely in $k$ for $R$ sufficiently big. Up to subsequence $q'_k\to q'$. By Remark \ref{rem:Supporto} we get that $q'\in \supp Z$, that contradicts the absurd hypothesis.\\
	Since $M$ is unbounded, by Corollary \ref{cor:StimaUnboundedSupport} (or equivalently (A.22) in \cite{KuSc}) we know that
	\[
	\lim_{\ro\to\infty} \frac{\mu_Z(B_\ro(q))}{\ro^2}\ge \pi.
	\]
	By construction
	\[
	\lim_k \int_{B_\si(0)\cap \pa M_k} \left\lgl \frac{p}{|p|^2}, co_{M_k} \right\rgl \, d\cH^1(p) =0,
	\]
	hence passing to the limit $k\to\infty$ in the monotonicity formula \eqref{monot} evaluated on $M_k$ we get that
	\[
	A_Z(\si) \le \liminf_k A_{M_k}(\si),
	\]
	for ae $\si>0$. By monotonicity
	\[
	\begin{split}
		A_Z(\si) \le  \liminf_k \lim_{\si\to\infty} A_{M_k}(\si) \le  \liminf_k \frac{\cW(M_k)}{4} + \cH^1(\pa M_k) \le \pi.
	\end{split}
	\]
	On the other hand, by (A.14) in \cite{KuSc} we can write that
	\[
	\lim_{\si\to\infty} A_Z(\si) = \frac14 \cW(Z)+ \lim_{\si\to\infty} \frac{\mu_Z(B_\si(q))}{\si^2} \ge \frac14 \cW(Z)+ \pi.
	\]
	Hence $Z$ is stationary, $\lim_{\ro\to\infty} \frac{\mu_Z(B_\ro(q))}{\ro^2}=\pi$, and $M$ is closed.\\
	If $p_0$ is any point in $M$, the monotonicity formula for $Z$ centered at $p_0$ reads
	\begin{equation}\label{eq14}
	\frac{\mu_Z(B_\si(p_0))}{\si^2} + \int_{B_\ro(p_0)\sm B_\si(p_0)} \frac{|(p-p_0)^\perp|^2}{|p-p_0|^4}= \frac{\mu_Z(B_\ro(q))}{\ro^2}.
	\end{equation}
	In particular $\te_Z(p_0)=1$, and thus we can apply Allard Regularity Theorem at $p_0$. Thus we get that $M$ is of class $C^\infty$ around $p_0$ (and analogously everywhere), and thus there exists the limit
	$$\lim_{\si\to0} \int_{B_\ro(p_0)\sm B_\si(p_0)} \frac{|(p-p_0)^\perp|^2}{|p-p_0|^4}= \int_{B_\ro(p_0)} \frac{|(p-p_0)^\perp|^2}{|p-p_0|^4}.$$ 
	Passing to the limits $\ro\to\infty$ and $\si\searrow0$ in \eqref{eq14}, we get that
	\[
	\lim_{\ro\to\infty} \int_{B_\ro(p_0)} \frac{|(p-p_0)^\perp|^2}{|p-p_0|^4}=0.
	\]
	Therefore $|(p-p_0)^\perp|=0$ for any $p\in M$, where we recall that $(\cdot)^\perp$ is the orthogonal projection on $T_pM^\perp$. Since this is true for any $p_0\in M$, we derive that $M$ is a plane.
	Finally Remark \ref{rem:Supporto} implies that $M$ contains the vertical axis $\{(0,0,t)\,|\,t\in\R\}$.
\end{proof}

\bigskip

\appendix

\section{Curvature varifolds with boundary}

\noindent In this appendix we recall the definitions and the results about curvature varifolds with boundary that we need throughout the whole work. This section is based on \cite{Ma} (see also \cite{SiGMT}, \cite{Hu}).\\

\noindent Let $\Om\con\R^k$ be an open set, and let $1<n\le k$. We identify a $n$-dimensional vector subspace $P$ of $\R^k$ with the $k\times k$-matrix $\{P_{ij}\}$ associated to the orthogonal projection over the subspace $P$. Hence the Grassmannian $G_{n,k}$ of $n$-spaces in $\R^k$ is endowed with the Frobenius metric of the corresponding projection matrices. Moreover given a subset $A\con\R^k$, we define $G_n(A)=A\times G_{n,k}$, endowed with the product topology. A general $n$-varifold $V$ in an open set $\Om\con\R^k$ is a non-negative Radon measure on $G_n(\Om)$. The varifold convergence is the weak* convergence of Radon measures on $G_n(\Om)$, defined by duality with $C^0_c(G_n(\Om))$ functions.\\
We denote by $\pi:G_n(\Om)\to\Om$ the natural projection, and by $\mu_V=\pi_\sharp(V)$ the push forward of a varifold $V$ onto $\Om$. The measure $\mu_V$ is called induced (weight) measure in $\Om$.\\
Given a couple $(M,\te)$ where $M\con\Om$ is countably $n$-rectifiable and $\te:M\to\N_{\ge1}$ is $\cH^n$-measurable, the symbol $\bv(M,\te)$ defines the (integer) rectifiable varifold given by
\begin{equation}
\int_{G_n(\Om)} \vp(x,P)\,d\bv(M,\te)(x,P) = \int_M \vp(x,T_xM)\,\te(x)\,d\cH^n(x),
\end{equation}
where $T_xM$ is the generalized tangent space of $M$ at $x$ (which exists $\cH^n$-ae since $M$ is rectifiable). The function $\te$ is called density or multiplicity of $\bv(M,\te)$. Note that $\mu_V=\te\cH^n\res M$ in such a case.\\

\noindent From now on we will always understand that a varifold $V$ is an integer rectifiable one.\\

\noindent We say that a function $\vec{H}\in L^1_{loc}(\mu_V;\R^k)$ is the generalized mean curvature of $V=\bv(M,\te)$ and $\si_V$ Radon $\R^k$-valued measure on $\Om$ is its generalized boundary if
\begin{equation}
\int \div_{TM} X \, d\mu_V = - n \int \lgl \vec{H}, X \rgl \,d\mu_V + \int X\,d\si_V,
\end{equation}
for any $X\in C^1_c(\Om;\R^k)$, where $\div_{TM} X(p)$ is the $\cH^n$-ae defined tangential divergence of $X$ on the tangent space of $M$. Recall that $\si_V$ has the form $\si_V=\nu_V\si$, where $|\nu_V|=1$ $\si$-ae and $\si$ is singular with respect to $\mu_V$.\\

\noindent If $V$ has generalized mean curvature $\vec{H}$, the Willmore energy of $V$ is defined to be
\begin{equation}
\cW(V)=\int |H|^2\,d\mu_V.
\end{equation}
The operator $X\mapsto\de V(X):=\int\div_{TM} X\,d\mu_V$ is called first variation of $V$. Observe that for any $X\in C^1_c(\Om;\R^k)$, the function $\vp(x,P):=\div_{P}(X)(x)=tr(P\nabla X(x))$ is continuous on $G_n(\Om)$. Hence, if $V_n\to V$ in the sense of varifolds, then $\de V_n(X)\to\de V(X)$.\\

\noindent By analogy with integration formulas classically known in the context of submanifolds, we say that a varifold $V=\bv(M,\te)$ is a curvature $n$-varifold with boundary in $\Om$ if there exist functions $A_{ijk}\in L^1_{loc}(V)$ and a Radon $\R^k$-valued measure $\pa V$ on $G_n(\Om)$ such that
\begin{equation}
\begin{split}
	\int_{G_n(\Om)} P_{ij}\pa_{x_j}\vp(x,P) &+ A_{ijk}(x,P)\pa_{P_{jk}}\vp(x,P)   \,dV(x,P) =\\&=  n\int_{G_n(\Om)}\vp(x,P) A_{jij}(x,P) \,dV(x,P) + \int_{G_n(\Om)} \vp(x,P)\,d\pa V_i(x,P),
\end{split}
\end{equation}
for any $i=1,...,k$ for any $\vp\in C^1_c(G_n(\Om))$. The rough idea is that the term on the left is the integral of a tangential divergence, while on the right we have integration against a mean curvature plus a boundary term. The measure $\pa V$ is called boundary measure of $V$.

\begin{thm}[\cite{Ma}]
	Let $V=\bv(M,\te)$ be a curvature varifold with boundary on $\Om$. Then the following hold true.\\
	i) $A_{ijk}=A_{ikj}$, $A_{ijj}=0$, and $A_{ijk}=P_{jr}A_{irk}+P_{rk}A_{ijr}=P_{jr}A_{ikr}+P_{kr}A_{ijr}$.\\
	ii) $P_{il}\pa V_l(x,P)=\pa V_i(x,P)$ as measures on $G_n(\Om)$.\\
	iii) $P_{il}A_{ljk}=A_{ijk}$.\\
	iv) $H_i(x,P):=\frac{1}{n}A_{jij}(x,P)$ satisfies that $P_{il}H_l(x,P)=0$ for $V$-ae $(x,P)\in G_n(\Om)$.\\
	v) $V$ has generalized mean curvature $\vec{H}$ with components $H_i(x,T_xM)$ and generalized boundary $\si_V=\pi_\sharp(\pa V)$.
\end{thm}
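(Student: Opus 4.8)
The plan is to extract all five assertions from the single defining identity of curvature varifolds by choosing the test function $\vp\in C^1_c(G_n(\Om))$ cleverly and by exploiting the algebraic description $G_{n,k}=\{P\in\mathrm{Sym}(k):P^2=P,\ \mathrm{tr}\,P=n\}$ of the Grassmannian inside the symmetric matrices. The starting remark is that in the defining identity the right-hand side and the term $P_{ij}\pa_{x_j}\vp$ depend only on the restriction of $\vp$ to $G_n(\Om)$, while $\pa_{P_{jk}}\vp$ does not; hence, applying the identity to any $w\in C^1_c(G_n(\Om))$ vanishing on $G_n(\Om)$ (so that $\pa_{x_j}w$ vanishes there as well), we get $\int A_{ijk}\,\pa_{P_{jk}}w\,dV=0$ for every such $w$ and every $i$. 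Since the $P$-gradients of such $w$'s span, at each $(x,P)$, the normal space of $G_{n,k}$ at $P$, this forces, for $V$-a.e. $(x,P)$ and each $i$, the symmetric matrix $A^i:=(A_{ijk})_{jk}$ to lie in $T_PG_{n,k}$. First I would compute $T_PG_{n,k}=\{B\in\mathrm{Sym}(k):PBP=0,\ (I-P)B(I-P)=0\}$ by differentiating $P^2=P$ and $\mathrm{tr}\,P=n$: these are exactly the symmetric matrices that are block off-diagonal for the splitting $\R^k=\mathrm{im}\,P\oplus\ker P$. Reading off the three consequences of $A^i\in T_PG_{n,k}$ — symmetry of $A^i$, vanishing of its diagonal blocks, and $A^i=PA^i+A^iP$ — yields precisely item i): $A_{ijk}=A_{ikj}$, $A_{ijj}=0$, and $A_{ijk}=P_{jr}A_{irk}+P_{rk}A_{ijr}$. (The last relation can also be produced directly by testing against $\psi(x)P_{jk}$ and against $\psi(x)P_{jr}P_{rk}$, which agree on $G_n(\Om)$ because $P^2=P$ there.)

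For items ii) and iii) — which involve the \emph{first} index of $A$ and the structure of the boundary measure $\pa V$ — the plan is to use test functions that couple the free index of the identity to a $P$-dependent factor. Writing the identity with free index $l$, testing it against $\psi(x)P_{il}$ for a fixed $i$, and summing over $l$, one gets $\sum_l P_{lj}P_{il}\pa_{x_j}\psi=(P^2)_{ij}\pa_j\psi=P_{ij}\pa_j\psi$ on $G_{n,k}$; comparing the resulting identity with the one for the free index $i$ tested against $\psi$, and performing the analogous computation on the $P$-derivative term, should isolate $P_{il}A_{ljk}=A_{ijk}$ and $P_{il}\,\pa V_l=\pa V_i$. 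Once iii) is available, item iv) is pure algebra: in $A_{ijk}=P_{jr}A_{irk}+P_{rk}A_{ijr}$ substitute $(i,j,k)\to(j,l,j)$ and sum over $j$; the first term gives $n\,P_{lr}H_r$ via $\sum_jA_{jrj}=nH_r$, while the second, after using the symmetry from i) and then iii) to contract the first index of $A$ with $P$, collapses to $\sum_rA_{rrl}=nH_l$; since also $\sum_jA_{jlj}=nH_l$, one concludes $P_{il}H_l=0$.

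Finally, item v) is a specialisation: taking $\vp(x,P)=X_i(x)$ independent of $P$, for $X\in C^1_c(\Om;\R^k)$, kills the $A_{ijk}\pa_{P_{jk}}\vp$ term, and since $\sum_iP_{ij}\pa_{x_j}X_i$ evaluated at $(x,T_xM)$ equals $\div_{TM}X(x)$, summing over $i$ and integrating against $\mu_V$ recovers the first-variation identity with $H_i(x,T_xM)=\tfrac1nA_{jij}(x,T_xM)$ in place of the generalised mean curvature and $\pi_\sharp(\pa V)$ in place of $\si_V$ (up to the normalisation constant $n$). By uniqueness of the generalised mean curvature and of the generalised boundary this identifies $\vec H$ and $\si_V$; moreover iv) gives $\vec H\perp T_xM$ $\mu_V$-a.e., and ii) gives the claimed tangential structure of $\pa V$, so that $\si_V=\nu_V\si$ with $\si$ singular with respect to $\mu_V$.

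I expect the genuine obstacle to be items ii) and iii). Unlike i), these cannot be obtained merely from extension-invariance of a single equation: one has to combine the identities for several free indices against $P$-dependent test functions and carry out a rather delicate bookkeeping of which contractions are tangential and of the various factors of $n$ — this index-tracking is the part of the argument where I would stay closest to \cite{Ma}. Everything else, namely i), iv) and v), is comparatively routine once ii)–iii) are in place.
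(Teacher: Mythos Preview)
The paper does not prove this theorem: it is stated in Appendix~A as a quotation from \cite{Ma} and no argument is given. There is therefore nothing to compare your proposal against in the present paper; you are essentially reconstructing Mantegazza's original proof, and your outline is faithful to that source.

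On the mathematics itself, your plan is sound. The key observation for i) --- that the defining identity must be insensitive to the choice of extension of $\vp$ off $G_n(\Om)$, whence $(A_{ijk})_{jk}$ is forced to lie in $T_PG_{n,k}$ --- is exactly the mechanism in \cite{Ma}, and your computation of $T_PG_{n,k}$ as the block off-diagonal symmetric matrices is correct. Your derivation of iv) from i) and iii) checks out: with $A_{abc}=P_{br}A_{arc}+P_{rc}A_{abr}$, setting $(a,b,c)=(j,l,j)$ and summing over $j$ gives $nH_l=nP_{lr}H_r+\sum_{j,r}P_{rj}A_{jlr}$, and the last sum equals $\sum_rA_{rlr}=nH_l$ by iii) and the symmetry $A_{rlr}=A_{rrl}$; hence $P_{lr}H_r=0$. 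Item v) is indeed just the specialisation $\vp(x,P)=X_i(x)$. Your honest flag on ii)--iii) is appropriate: those require comparing the identity for free index $i$ against the one for free index $l$ tested with $P_{il}\psi(x)$, and the bookkeeping is the only delicate part --- but it is carried out in full in \cite{Ma}, so there is no gap, only a reference to fill in.
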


\noindent We call the functions $\sff_{ij}^k(x):=P_{il}A_{jkl}$ components of the generalized second fundamental form of a curvature varifold $V$. Observe that $\sff_{jj}^k=P_{jl}A_{jlk}=A_{jjk}-P_{kl}A_{jjl}=A_{jkj}-P_{kl}A_{jlj}=nH_k-nP_{kl}H_l=nH_k$, and $A_{ijk}=\sff^k_{ij}+\sff^j_{ki}$.\\

\noindent In conclusion we state the compactness theorem that we use in this work.

\begin{thm}[\cite{Ma}]\label{thm:ConvergenceVarifolds}
	Let $p>1$ and $V_l$ a sequence of curvature varifolds with boundary in $\Om$. Call $A_{ijk}^{(l)}$ the functions $A_{ijk}$ of $V_l$. Suppose that $A_{ijk}^{(l)}\in L^p(V)$ and
	\begin{equation}
	\sup_l \quad\bigg\{\mu_{V_l}(W) + \int_{G_n(W)} \bigg|\sum_{i,j,k} |A^{(l)}_{ijk}|\bigg|^p\,dV_l + |\pa V_l|(G_n(W))\bigg\}\quad\le C(W)<+\infty
	\end{equation}
	for any $W\con\con G_n(\Om)$, where $|\pa V_l|$ is the total variation measure of $\pa V_l$. Then:\\
	i) up to subsequence $V_l$ converges to a curvature varifold with boundary $V$ in the sense of varifolds. Moreover $A^{(l)}_{ijk} V_l\to A_{ijk}V$ and $\pa V_l\to \pa V$ weakly* as measures on $G_n(\Om)$;\\
	ii) for every lower semicontinuous function $f:\R^{k^3}\to[0,+\infty]$ it holds that
	\begin{equation}
	\int_{G_n(\Om)} f(A_{ijk})\,dV \le \liminf_l \int_{G_n(\Om)} f(A_{ijk}^{(l)})\,dV_l.
	\end{equation}
\end{thm}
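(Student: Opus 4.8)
This is M.\ Mantegazza's compactness theorem, and the plan is to combine weak-$*$ compactness of Radon measures with Allard's compactness theorem for integral varifolds and then to identify the limiting curvature structure using crucially the strict inequality $p>1$.

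First I would extract the limits. Fix an exhaustion $W_1\con\con W_2\con\con\cdots$ of $G_n(\Om)$. The bound $\mu_{V_l}(W)\le C(W)$ says the $V_l$ are locally equibounded Radon measures on $G_n(\Om)$, so by Banach--Alaoglu and a diagonal argument a subsequence satisfies $V_l\rightharpoonup V$ weakly-$*$ on $G_n(\Om)$, hence $\mu_{V_l}\rightharpoonup\mu_V$ on $\Om$. The total variation bound on $\pa V_l$ gives, along a further subsequence, $\pa V_l\rightharpoonup\mathbf b$ for a vector-valued Radon measure $\mathbf b$ on $G_n(\Om)$. Finally, since $p>1$, H\"older's inequality gives $\int_{G_n(W)}|A^{(l)}_{ijk}|\,dV_l\le\big(\int_{G_n(W)}|A^{(l)}_{ijk}|^p\,dV_l\big)^{1/p}(\mu_{V_l}(W))^{1/p'}\le C(W)$, so the vector measures $A^{(l)}_{ijk}V_l$ are locally equibounded and, along a further subsequence, $A^{(l)}_{ijk}V_l\rightharpoonup\tau_{ijk}$ for Radon measures $\tau_{ijk}$ on $G_n(\Om)$.

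Next I would show the limit is integral. Testing the curvature-varifold identity of $V_l$ against functions $\vp$ depending only on the $x$-variable annihilates the $A^{(l)}_{ijk}\pa_{P_{jk}}\vp$ term and recovers the first-variation identity $\de V_l(X)=-n\int\langle\vec H_l,X\rangle\,d\mu_{V_l}+\int X\,d\si_{V_l}$, with $\vec H_l$ built from traces of $A^{(l)}$ and $\si_{V_l}=\pi_\sharp\pa V_l$. The H\"older estimate above together with the total variation bound on $\pa V_l$ gives $\|\de V_l\|(W)\le C(W)$ uniformly. For each fixed $X$ the integrand $(x,P)\mapsto\mathrm{tr}(P\,\nabla X(x))$ lies in $C^0_c(G_n(\Om))$, so $\de V_l(X)\to\de V(X)$ and $V$ has locally bounded first variation; since the $V_l$ are integral varifolds by definition, Allard's integral varifold compactness theorem applies and gives that $V=\bv(M,\te_V)$ is an integral varifold with $\|\de V\|(W)\le\liminf_l\|\de V_l\|(W)$.

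The heart of the matter is identifying the curvature of $V$. Passing to the limit in the full curvature-varifold identity — the $x$-derivative and boundary terms converge because $V_l\rightharpoonup V$ and $\pa V_l\rightharpoonup\mathbf b$, the $\pa_{P_{jk}}\vp$ terms converge against the $\tau_{ijk}$, and the mean-curvature terms against the corresponding traces of the $\tau$'s — shows that $V$ satisfies that identity with the $A$'s replaced by densities of the $\tau_{ijk}$, \emph{provided} $\tau_{ijk}\ll V$. This is exactly where $p>1$ enters, via the following weak-continuity lemma: if $\mu_l\rightharpoonup\mu$, $f_l\mu_l\rightharpoonup\sigma$ and $\sup_l\|f_l\|_{L^p(\mu_l)}<\infty$ with $p>1$, then $\sigma=f\mu$ for some $f\in L^p(\mu)$ with $\|f\|_{L^p(\mu)}\le\liminf_l\|f_l\|_{L^p(\mu_l)}$. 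Indeed, for $\phi\in C^0_c$ one has $|\sigma(\phi)|=\lim_l|\int\phi\,f_l\,d\mu_l|\le\liminf_l\|f_l\|_{L^p(\mu_l)}\big(\int|\phi|^{p'}\,d\mu_l\big)^{1/p'}$ and $\int|\phi|^{p'}\,d\mu_l\to\int|\phi|^{p'}\,d\mu$, so $\sigma$ is a bounded functional on the $L^{p'}(\mu)$-dense subspace $C^0_c$, hence represented by an $f\in(L^{p'}(\mu))^\ast=L^p(\mu)$ with the stated bound. Applying this with $\mu_l=V_l$ (as measures on $G_n(\Om)$) and $f_l=A^{(l)}_{ijk}$ produces $A_{ijk}\in L^p_{loc}(V)$ with $\tau_{ijk}=A_{ijk}V$, which proves (i) and also (ii) for $f(\cdot)=|\cdot|^p$, hence for every convex lower semicontinuous $f$. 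For a general lower semicontinuous $f\ge0$ one instead passes to the associated ``generalized'' curvature varifolds, i.e.\ the measures $W_l$ on $G_n(\Om)\times\R^{k^3}$ given by $W_l(\psi)=\int\psi\big(x,P,(A^{(l)}_{ijk}(x,P))\big)\,dV_l$; the bound with $p>1$ prevents escape of mass in the last variable, so $W_l\rightharpoonup W$ up to a subsequence, and the algebraic constraints on the $A_{ijk}$ together with $\tau_{ijk}=A_{ijk}V$ force $W$ to be concentrated on the graph $\{(x,P,(A_{ijk}(x,P)))\}$ over $V$; then (ii) follows from the weak-$*$ lower semicontinuity of $\nu\mapsto\int f\,d\nu$ for nonnegative lower semicontinuous $f$. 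I expect this last identification — absolute continuity of the limit curvature measures with respect to $V$, with $L^p$ densities, and the absence of surviving oscillation of the second fundamental form — to be the main obstacle, and it is precisely the point where the hypothesis $p>1$ cannot be dropped; the remaining steps are a routine assembly of weak compactness, the first-variation computation, and Allard's integral compactness theorem.
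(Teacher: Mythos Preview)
The paper does not give its own proof of this statement: Theorem~\ref{thm:ConvergenceVarifolds} is quoted verbatim from Mantegazza's paper \cite{Ma} (and ultimately rests on Hutchinson's framework \cite{Hu}), and is used here purely as a black box. So there is no ``paper's own proof'' to compare against; any assessment has to be relative to the original argument in \cite{Ma}.

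That said, your outline follows Mantegazza's strategy quite faithfully: weak-$*$ extraction of $V$, $\tau_{ijk}$, and $\mathbf b$; Allard integral compactness to ensure the limit is integer rectifiable; the key $p>1$ weak-continuity lemma to get $\tau_{ijk}=A_{ijk}V$ with $A_{ijk}\in L^p_{loc}(V)$; and the lift to measures $W_l$ on $G_n(\Om)\times\R^{k^3}$ to obtain the general lower semicontinuity~(ii). The one place where your sketch is too quick is the sentence ``the algebraic constraints on the $A_{ijk}$ together with $\tau_{ijk}=A_{ijk}V$ force $W$ to be concentrated on the graph''. Knowing the first moments $\tau_{ijk}$ of the fibre measures does not by itself pin down the fibres as Dirac masses; in \cite{Hu,Ma} this step is a genuine structure theorem for ``generalized'' curvature varifolds, using that the limit $W$ still satisfies the integrated curvature identities in the extended variables together with integer rectifiability of the base $V$ to conclude that the disintegration of $W$ over $V$ is almost everywhere a single Dirac mass. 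You correctly flag this as the main obstacle, but the mechanism you name (algebraic constraints plus $\tau_{ijk}=A_{ijk}V$) is not sufficient on its own; you need to invoke or reproduce that structure result.
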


\noindent It follows from the above theorem that the Willmore energy is lower semicontinuous with respect to varifold convergence of curvature varifolds with boundary satisfying the hypotheses of Theorem \ref{thm:ConvergenceVarifolds}.\\
%

\bigskip

\section{Monotonicity formula and structure of varifolds with bounded energy}

\noindent The monotonicity formula on varifolds with locally bounded first variation is a fundamental identity proved in \cite{SiEX}, with important consequences on the structure of varifolds with bounded Willmore energy, collected for example in \cite{KuSc}. Such consequences usually concern varifolds without generalized boundary: $\si_V=0$. So, in this section we are interested in extending some of these results in the case of curvature varifold with boundary. The strategy is analogous to the one of \cite{KuSc} and the following results are probably expected by the experts in the field, however we prove them here for the convenience of the reader.\\
Let $V=\bv(M,\te_V)$ be a $2$-dimensional curvature varifold with boundary with finite Willmore energy. Denote by $\si_V$ the generalized boundary. Let $0<\si<\ro$ and $p_0\in\R^3$. Integrating the tangential divergence of the field $X(p)=\left( \frac{1}{|p-p_0|^2_\si} - \frac{1}{\ro^2} \right)_+(p-p_0)$, where $|p-p_0|^2_\si=\max\{\si^2,|p-p_0|^2\}$, with respect to the measure $\mu_V$ (see also \cite{SiEX} and \cite{RiLI}) one gets that
\begin{equation} \label{monot2}
A(\si)+\int_{B_\ro(p_0)\sm B_\si (p_0) } \bigg| \frac{\vec{H}}{2} +\frac{(p-p_0)^\perp}{|p-p_0|^2} \bigg|^2\,d\mu_V(p) = A(\ro),
\end{equation}
where
\begin{equation}
A(\ro):= \frac{\mu_V(B_\ro(p_0))}{\ro^2}+\frac{1}{4}\int_{B_\ro(p_0)} |H|^2\,d\mu_V(p)+ R_{p_0,\ro},
\end{equation}
and
\begin{equation}
\begin{split}
R_{p_0,\ro}&:= \int_{B_\ro(p_0)} \frac{\lgl \vec{H}, p-p_0\rgl}{\ro^2}\,d\mu_V(p) + \frac{1}{2}\int_{B_\ro(p_0)} \bigg( \frac{1}{|p-p_0|^2}-\frac{1}{\ro^2} \bigg)(p-p_0) \,d\si_V(p)\\& =: \int_{B_\ro(p_0)} \frac{\lgl \vec{H}, p-p_0\rgl}{\ro^2}\,d\mu_V(p) + T_{p_0,\ro}.
\end{split}
\end{equation}
In particular the function $\ro\mapsto A(\ro)$ is non-decreasing.

\noindent From now on, let us assume that the support $\supp\si_V\con S$, where $S$ is compact and $|\si_V|(S)<+\infty$. We also assume that
\[
\limsup_{R\to\infty} \frac{\mu_V(B_R(0))}{R^2}\le K<+\infty.
\]


\noindent We have that
\begin{equation}\label{eq7}
\begin{aligned}
\left| \int_{B_\ro(p_0)} \frac{\lgl \vec{H}, p-p_0\rgl}{\ro^2}\,d\mu_V(p)\right| &\le \left(\frac{\mu_V(B_\ro(p_0))}{\ro^2}\right)^{\frac12}\left(\int_{B_\ro(p_0)}|H|^2\,d\mu_V\right)^{\frac12}  \\
&\le \frac\ep2 \frac{\mu_V(B_\ro(p_0))}{\ro^2} + \frac2\ep \int_{B_\ro(p_0)}|H|^2\,d\mu_V.
\end{aligned}
\end{equation}
If $d(p_0,S)\ge\de$ we have that
\begin{equation}
\begin{aligned}
	\left| \int_{B_\ro(p_0)} \bigg( \frac{1}{|p-p_0|^2}-\frac{1}{\ro^2} \bigg)(p-p_0) \,d\si_V(p) \right|\le \left(\frac1\de +\frac1\ro\right) |\si_V|(S\cap B_\ro(p_0)).
\end{aligned}
\end{equation}
In particular the monotone function $A(\ro)$ evaluated at $p_0\not \in S$ is bounded below and there exists finite the limit $\lim_{\ro\searrow0} A(\ro)$. \\
Keeping $p_0\not\in S$ \eqref{monot2} implies that
\begin{equation}\label{eq4}
\begin{aligned}
	\frac{\mu_V(B_\si(p_0))}{\si^2}  &\le \frac{\mu_V(B_\ro(p_0))}{\ro^2}+\frac{1}{4}\int_{B_\ro(p_0)} |H|^2\,d\mu_V(p)+ R_{p_0,\ro} - R_{p_0,\si} \\
	&\le \frac{\mu_V(B_\ro(p_0))}{\ro^2} +\frac14\cW(V) + \left(\frac{\mu_V(B_\ro(p_0))}{\ro^2}\right)^{\frac12}\cW(V)^{\frac12} - T_{p_0,\si} +\left(\frac1\de +\frac1\ro\right) |\si_V|(S\cap B_\ro(p_0)) \\&\quad+  \frac\ep2 \frac{\mu_V(B_\si(p_0))}{\si^2} + \frac2\ep \cW(V) 
\end{aligned}
\end{equation}
\noindent Letting $\ro\to\infty$ and $\si<\de$ in \eqref{eq4} we get that $T_{p_0,\si}=0$ and
\begin{equation}\label{eq6}
	\frac{\mu_V(B_\si(p_0))}{\si^2}  \le C(\de,K,\cW(V)) <+\infty \qquad\quad\forall\,0<\si<\de,
\end{equation}
Letting $\ro\to0$ in \eqref{eq7} and using \eqref{eq6} we get that
\begin{equation}\label{eq8}
	\lim_{\ro\to0} \left| \int_{B_\ro(p_0)} \frac{\lgl \vec{H}, p-p_0\rgl}{\ro^2}\,d\mu_V(p)\right| =0. 
\end{equation}
Therefore we see that if $p_0\in\R^3\sm S$, then
\begin{equation}
	\exists\,\lim_{\si\searrow0} \frac{\mu_V(B_\si(p_0))}{\si^2}=\pi\te_V(p_0) \le C(\de,|\si_V|(S),K,\cW(V)).
\end{equation}
Moreover, consider $p_0\in\R^3\sm S$ and a sequence $p_k\to p_0$; let $\ro\in(0,d(p_0,S)/2)$ and call $\ro_0=d(p_0,S)/2$, then by \eqref{monot2} we have that
\begin{equation}
\begin{aligned}
	\frac{\mu_V(\overline{B_\ro(p_0)})}{\ro^2} &\ge \limsup_k \frac{\mu_V(B_\ro(p_k))}{\ro^2} \ge \limsup_k \pi\te_V(p_k) - R_{p_k,\ro} -\frac14\int_{B_\ro(p_k)} |H|^2\,d\mu_V \\
	&\ge \limsup_k \pi\te_V(p_k) - \int_{B_{2\ro}(p_0)} \frac{|H|}{\ro} \,d\mu_V -\frac14\int_{B_{2\ro}(p_k)} |H|^2\,d\mu_V  \\
	&\ge \limsup_k \pi\te_V(p_k) - \left(\frac{\mu_V(B_{2\ro}(p_0))}{\ro^2}\right)^{\frac12}\left(\int_{B_{2\ro}(p_0)}|H|^2\,d\mu_V\right)^{\frac12} -\frac14\int_{B_{2\ro}(p_k)} |H|^2\,d\mu_V  \\
	&\ge \limsup_k \pi\te_V(p_k) - \left(C(2\ro_0,|\si_V|(S),K,\cW(V))+\frac14\right)\left(\int_{B_{2\ro}(p_0)}|H|^2\,d\mu_V\right)^{\frac12},
\end{aligned}
\end{equation}
and thus letting $\ro\searrow0$ suitably we get
\begin{equation}
	\te_V(p_0)\ge \limsup_k \te_V(p_k),
\end{equation}
i.e. the multiplicity function $\te_V$ is upper semicontinuous on $\R^3\sm S$. Since $\te_V$ is integer valued, the set $\{p\in\R^3\sm S\,|\, \te_v(p)\ge\frac12\}$ is closed in $\R^3\sm S$. Therefore we can take the closed set $M=\{p\in\R^3\sm S\,|\, \te_v(p)\ge\frac12\}\cup S$ as the support of $V$.

\noindent A particular case of our analysis can be summarized in the following statement.

\begin{prop}\label{prop:StructureProperties}
	Let $V$ be a $2$-dimensional integer rectifiable curvature varifold with boundary. Denote by $\si_V$ the generalized boundary and by $S$ a compact set containing the support $\supp\si_V$. Assume that
	\[
	\cW(V)<+\infty, \quad \limsup_{R\to\infty} \frac{\mu_V(B_R(0))}{R^2}\le K<+\infty,
	\]
	and $S$ is a compact $1$-dimensional manifold with $\cH^1(S)<+\infty$. Then the limit
	\[
	\lim_{\ro\searrow0} \frac{\mu_V(B_\ro(p))}{\ro^2}
	\]
	exists at any point $p\in\R^3\sm S$, the multiplicity function $\te_V(p)=\lim_{\ro\searrow0} \frac{\mu_V(B_\ro(p))}{\ro^2}$ is upper semicontinuous on $\R^3\sm S$ and bounded by a constant $C(d(p,S),|\si_V|(S),K,\cW(V))$ depending only on the distance $d(p,S)$, $|\si_V|(S)$, $K$ and $\cW(V)$. Moreover $V=\bv(M,\te_V)$ where $M=\{p\in\R^3\sm S\,|\, \te_v(p)\ge\frac12\}\cup S$ is closed.
\end{prop}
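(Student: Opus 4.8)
The plan is simply to collect the estimates worked out in the preceding discussion, which were tailored to exactly this situation: the statement is the special case in which the auxiliary set $S$ containing $\supp\si_V$ is a compact $1$-manifold, so that automatically $\cH^1(S)<+\infty$ forces $\cH^2(S)=0$, hence $\mu_V(S)=0$, while $|\si_V|(S)<+\infty$ because $\pa V$ is a Radon measure and $S$ is compact. First I would fix $p_0\in\R^3\sm S$ and set $\de:=d(p_0,S)>0$. The two remainder estimates already established — the Cauchy--Schwarz/Young bound \eqref{eq7} on the mean-curvature term of $R_{p_0,\ro}$, and the elementary bound $|T_{p_0,\ro}|\le(\tfrac1\de+\tfrac1\ro)|\si_V|(S\cap B_\ro(p_0))$ on the boundary term, valid precisely because $d(p_0,S)\ge\de$ — show that the monotone quantity $A(\ro)$ centered at $p_0$ is bounded below, hence $\lim_{\ro\searrow0}A(\ro)$ exists.

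Next I would feed these bounds into the monotonicity identity \eqref{monot2} in the form \eqref{eq4}, and let $\ro\to\infty$ keeping $\si<\de$. This yields at once that $T_{p_0,\si}=0$ for $\si<\de$ (i.e. small balls around $p_0$ miss $\supp\si_V$), together with the area-ratio bound \eqref{eq6}, namely $\mu_V(B_\si(p_0))/\si^2\le C(\de,K,\cW(V))$ for all $0<\si<\de$. Inserting \eqref{eq6} back into \eqref{eq7} and letting $\ro\to0$ gives \eqref{eq8}: the mean-curvature remainder vanishes as $\ro\to0$. Since $T_{p_0,\si}=0$ for small $\si$ as well, every term of $A(\si)$ other than $\mu_V(B_\si(p_0))/\si^2$ tends to $0$; therefore $\lim_{\si\searrow0}\mu_V(B_\si(p_0))/\si^2$ exists, and — $V$ being integer rectifiable — it equals $\pi\te_V(p_0)$, with the claimed bound $C(d(p_0,S),|\si_V|(S),K,\cW(V))$ coming from the finite value of $\lim_{\ro\to\infty}A(\ro)$, which is controlled by $K$, $\cW(V)$ and $|\si_V|(S)$.

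For upper semicontinuity I would reproduce the computation already displayed: given $p_k\to p_0\in\R^3\sm S$, monotonicity \eqref{monot2} applied at $p_k$ gives, for $\ro<d(p_0,S)/2$,
\[
\frac{\mu_V(\overline{B_\ro(p_0)})}{\ro^2}\ge\limsup_k\pi\te_V(p_k)-C\big(d(p_0,S),|\si_V|(S),K,\cW(V)\big)\Big(\int_{B_{2\ro}(p_0)}|H|^2\,d\mu_V\Big)^{\frac12},
\]
and letting $\ro\searrow0$ kills the error term because $\cW(V)<+\infty$, so $\te_V(p_0)\ge\limsup_k\te_V(p_k)$. Finally, since $\te_V$ is integer valued and upper semicontinuous on $\R^3\sm S$, the set $\{p\in\R^3\sm S:\te_V(p)\ge\tfrac12\}=\{\te_V\ge1\}$ is relatively closed in $\R^3\sm S$; as $S$ is compact, $M=\{\te_V\ge\tfrac12\}\cup S$ is closed in $\R^3$. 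Because $\mu_V(S)=0$ and $\mu_V$-almost every point of the rectifiable set underlying $V$ lies (up to an $\cH^2$-null set) in $\{\te_V\ge1\}$, the rectifiable set may be replaced by $M$, i.e. $V=\bv(M,\te_V)$, extending $\te_V$ by zero on the $\cH^2$-negligible part $S$.

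There is no real obstacle here — the proof is the standard Simon--Kuwert--Sch\"atzle density/monotonicity argument run verbatim with the extra term $\si_V$ carried along, which is exactly what the previous pages already do. The only point requiring care is bookkeeping: tracking the dependence of the constants, and checking that $T_{p_0,\ro}$ is genuinely harmless away from $S$ (which is where the distance bound $d(p_0,S)\ge\de$ enters and where the hypothesis $\cH^1(S)<+\infty$, via $\cH^2(S)=0$, makes the last reduction to $V=\bv(M,\te_V)$ legitimate).
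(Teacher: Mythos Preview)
Your proposal is correct and follows exactly the paper's own argument: the paper's proof of this proposition is precisely the sequence of estimates \eqref{eq7}--\eqref{eq8} and the displayed upper semicontinuity computation preceding the statement in Appendix~B, which you reproduce step for step with the same use of the monotonicity identity, the same limiting procedure $\ro\to\infty$, $\si<\de$, and the same conclusion on the closedness of $M$. Your added remarks that $\cH^2(S)=0$ (hence $\mu_V(S)=0$) and that $|\si_V|(S)<+\infty$ by compactness are useful clarifications the paper leaves implicit.
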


\noindent Also, we can derive the following consequence.

\begin{cor}\label{cor:StimaUnboundedSupport}
	Let $V=\bv(M,\te_V)$ be a $2$-dimensional integer rectifiable curvature varifold with boundary with $\cW(V)<+\infty$. Denote by $\si_V$ the generalized boundary and by $S$ a compact set containing the support $\supp\si_V$. Assume that $S$ is a compact $1$-dimensional manifold with $\cH^1(S)<+\infty$. Then
	\begin{equation}
		M \mbox{ ess. unbounded} \qquad \Leftrightarrow \qquad \limsup_{\ro\to\infty} \frac{\mu_V(B_\ro(0))}{\ro^2} \ge \pi,
	\end{equation}
	where $M$ essentially unbounded means that for every $R>0$ there is $B_r(x)\con \R^3\sm B_R(0)$ such that $\mu_V(B_r(x))>0$.\\
	Moreover, in any of the above cases the limit $\lim_{\ro\to\infty} \frac{\mu_V(B_\ro(0))}{\ro^2} \ge\pi$ exists.
\end{cor}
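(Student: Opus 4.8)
The plan is to prove the two implications separately, establishing existence of the limit along the way, and to follow the scheme of (A.22) in \cite{KuSc}, carrying along the extra boundary terms of \eqref{monot}; these are harmless because $\supp\si_V\con S$ with $S$ bounded.

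\emph{The easy implication.} Suppose $M$ is not essentially unbounded, i.e.\ there is $R$ with $\mu_V(B_r(x))=0$ whenever $B_r(x)\con\R^3\sm B_R(0)$. Covering $\R^3\sm\overline{B_R(0)}$ by countably many such balls gives $\mu_V(\R^3\sm\overline{B_R(0)})=0$, so $\mu_V(B_\ro(0))=\mu_V(\overline{B_R(0)})<+\infty$ is eventually constant in $\ro$ and hence $\ro^{-2}\mu_V(B_\ro(0))\to0$; in particular the limit exists and is $<\pi$. This is exactly the contrapositive of ``$\limsup_{\ro\to\infty}\ro^{-2}\mu_V(B_\ro(0))\ge\pi\Rightarrow M$ essentially unbounded''. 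It remains to prove that if $M$ is essentially unbounded then $\lim_{\ro\to\infty}\ro^{-2}\mu_V(B_\ro(0))$ exists and is $\ge\pi$.

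\emph{Existence of the limit.} Fix $p_0\in\R^3\sm S$, $\de:=d(p_0,S)>0$. By \eqref{monot} the function $\ro\mapsto A_V(\ro)$ (centered at $p_0$) is non-decreasing, so $A_\infty:=\lim_{\ro\to\infty}A_V(\ro)\in(-\infty,+\infty]$ exists. Young's inequality applied to $\int_{B_\ro(p_0)}\ro^{-2}\lgl\vec H,p-p_0\rgl\,d\mu_V$, together with $|T_{p_0,\ro}|\le(\de^{-1}+\ro^{-1})|\si_V|(S)$, yields $\tfrac12\,\ro^{-2}\mu_V(B_\ro(p_0))-C\le A_V(\ro)\le\tfrac32\,\ro^{-2}\mu_V(B_\ro(p_0))+C$ with $C=C(\cW(V),|\si_V|(S),\de)$. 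Hence if $A_\infty=+\infty$ then $\ro^{-2}\mu_V(B_\ro(p_0))\to+\infty$; and if $A_\infty<+\infty$ then $\limsup_{\ro\to\infty}\ro^{-2}\mu_V(B_\ro(p_0))<+\infty$, so Proposition \ref{prop:StructureProperties} applies. In the latter case the remainder vanishes as $\ro\to\infty$: splitting the integral at a fixed radius $r_0$, the part over $B_{r_0}(p_0)$ is $O(\ro^{-2})$, while the part over $B_\ro(p_0)\sm B_{r_0}(p_0)$ is at most $\ro^{-1}\mu_V(B_\ro(p_0))^{1/2}\cW(V)^{1/2}\le C'\big(\int_{\R^3\sm B_{r_0}(p_0)}|\vec H|^2\,d\mu_V\big)^{1/2}$ by the quadratic area bound, and the latter is small for $r_0$ large since $\vec H\in L^2(\mu_V)$; moreover $T_{p_0,\ro}$ converges by dominated convergence. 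Then, from $\ro^{-2}\mu_V(B_\ro(p_0))=A_V(\ro)-\tfrac14\int_{B_\ro(p_0)}|H|^2\,d\mu_V-\int_{B_\ro(p_0)}\ro^{-2}\lgl\vec H,p-p_0\rgl\,d\mu_V-T_{p_0,\ro}$, the left side has a finite limit. In all cases $\ro^{-2}\mu_V(B_\ro(p_0))$ converges in $[0,+\infty]$, and since $B_{\ro-|p_0|}(p_0)\con B_\ro(0)\con B_{\ro+|p_0|}(p_0)$ the ratio with center $0$ converges to the same value $L$.

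\emph{The lower bound $L\ge\pi$.} If $L=+\infty$ there is nothing to prove, so assume $L<+\infty$; then Proposition \ref{prop:StructureProperties} gives that $\te_V$ is a bounded integer-valued upper semicontinuous function on $\R^3\sm S$ and $\supp\mu_V\con M=\{\te_V\ge\tfrac12\}\cup S$. Since $M$ is essentially unbounded we may choose $p_j\in\supp\mu_V$ with $|p_j|\to\infty$, so for $j$ large $p_j\notin S$ and, $\te_V$ being integer-valued, $\te_V(p_j)\ge1$. Evaluating \eqref{monot} at $p_j$, letting $\si\searrow0$ (so $A_V(\si)\to\pi\te_V(p_j)$ by \eqref{eq10} and $T_{p_j,\si}=0$ for $\si$ small) and $\ro\to\infty$ (so $A_V(\ro)\to L+\tfrac14\cW(V)+T_{p_j,\infty}$, with $T_{p_j,\infty}:=\tfrac12\int_S|p-p_j|^{-2}(p-p_j)\,d\si_V$), we obtain
\[
\pi\ \le\ \pi\te_V(p_j)+\int_{\R^3}\Big|\frac{\vec H}{2}+\frac{(p-p_j)^\perp}{|p-p_j|^2}\Big|^2\,d\mu_V\ =\ L+\tfrac14\cW(V)+T_{p_j,\infty}.
\]
By the triangle inequality in $L^2(\mu_V)$ and $\vec H\in L^2(\mu_V)$ we have $J_j:=\int_{\R^3}|p-p_j|^{-4}|(p-p_j)^\perp|^2\,d\mu_V<+\infty$, so we may expand the square; since $\vec H\perp T_pM$ the term $\tfrac14\cW(V)$ cancels and
\[
L+T_{p_j,\infty}\ =\ \pi\te_V(p_j)+\int_{\R^3}\frac{\lgl\vec H,(p-p_j)^\perp\rgl}{|p-p_j|^2}\,d\mu_V+J_j\ \ge\ \pi+\int_{\R^3}\frac{\lgl\vec H,(p-p_j)^\perp\rgl}{|p-p_j|^2}\,d\mu_V.
\]
Now $|T_{p_j,\infty}|\le(2\,d(p_j,S))^{-1}|\si_V|(S)\to0$, and by Cauchy--Schwarz the cross term is bounded by $\cW(V)^{1/2}J_j^{1/2}$, so it suffices to show $J_j\to0$. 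This is the one genuinely delicate step, and it is exactly where the asymptotic analysis of \cite{KuSc} enters: the uniform bound on the defect integral in \eqref{monot} forces $V$ to be asymptotic, near infinity, to a finite union of planes of integer multiplicity, on which $(p-p_j)^\perp$ is controlled strongly enough to give $J_j\to0$ as $j\to\infty$; this is (A.22) in \cite{KuSc}, whose proof carries over here, the only change being the term $T_{p_j,\infty}$, which is supported on the compact set $S$ and tends to $0$ as the centers escape to infinity. Letting $j\to\infty$ gives $L\ge\pi$, and together with the easy implication and the existence of the limit this proves the equivalence and the final assertion.
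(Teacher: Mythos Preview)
Your treatment of the easy implication and of the existence of the limit is fine. The gap is in the lower bound $L\ge\pi$: you reduce to showing that
\[
J_j:=\int_{\R^3}\frac{|(p-p_j)^\perp|^2}{|p-p_j|^4}\,d\mu_V(p)\xrightarrow[j\to\infty]{}0,
\]
and you justify this by appealing to ``(A.22) in \cite{KuSc}, whose proof carries over here''. But (A.22) in \cite{KuSc} is precisely the boundaryless version of the statement you are proving, not a lemma asserting $J_j\to0$; and in fact $J_j\to0$ is \emph{false} in general. Take $V$ to be the union of the two parallel planes $\{z=0\}$ and $\{z=d\}$ with multiplicity $1$ (so $\vec H\equiv0$, $\si_V=0$, $L=2\pi$) and $p_j=(j,0,0)$. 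The contribution of $\{z=0\}$ to $J_j$ vanishes, while on $\{z=d\}$ one computes
\[
J_j=\int_{\R^2}\frac{d^2}{(|q|^2+d^2)^2}\,dq=2\pi d^2\int_0^\infty\frac{r}{(r^2+d^2)^2}\,dr=\pi
\]
for every $j$. So $J_j\equiv\pi$ does not tend to $0$, and your Cauchy--Schwarz bound $|\text{cross term}|\le\cW(V)^{1/2}J_j^{1/2}$ gives no information (here the cross term is zero anyway since $\vec H=0$, but your argument does not see this). More generally, the ``asymptotic to finitely many planes'' heuristic you invoke is exactly what makes $J_j$ stabilize at a positive constant rather than vanish.

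The paper avoids this issue entirely by a blow-in argument. After establishing the existence of the limit and a preliminary lower bound $L\ge\pi/9$ via monotonicity at moving centers $x_\ro\in M\cap\partial B_{2\ro}(0)$, it rescales: set $V_n=\bv(M/R_n,\te_V(R_n\,\cdot))$ for any $R_n\to\infty$. The uniform quadratic mass growth, $\cW(V_n)=\cW(V)$, and $|\si_{V_n}|(\R^3)\to0$ give compactness; the limit $W$ is a nonzero integer rectifiable varifold with $\si_W=0$ (this requires a small cut-off argument near the origin to rule out a Dirac mass in $\si_W$) and $\cW(W)=0$, hence $W$ is stationary. The monotonicity formula for stationary varifolds then yields $\mu_W(B_1(0))\ge\pi\te_W(0)\ge\pi$, and lower semicontinuity of mass gives
\[
L=\lim_n\frac{\mu_V(B_{R_n}(0))}{R_n^2}=\lim_n\mu_{V_n}(B_1(0))\ge\mu_W(B_1(0))\ge\pi.
\]
This route never needs to control $J_j$ or the cross term at escaping centers; it trades that difficulty for a compactness-and-stationarity argument on the rescaled sequence.
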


\begin{proof}
	Suppose that $M$ is essentially unbounded. We can assume that $\limsup_{\ro\to\infty} \frac{\mu_V(B_\ro(0))}{\ro^2} \le K<+\infty$. Then
	\[
	\begin{split}
			\left| \int_{B_\ro(0)} \frac{1}{\ro^2} \lgl \vec{H}, p \rgl \,d\mu_V \right|&\le \frac{1}{\ro^2} \left( \int_{B_\si(0)} |H||p|\,d\mu_V(p) + \int_{B_\ro(0)\sm B\si(0)} |H||p|\,d\mu_V(p) \right) \\&\le \frac{\si}{\ro^2} \sqrt{\int_{B_\si(0)} |H|^2\,d\mu_V }\sqrt{\mu_V(B_\si(0))} + \sqrt{\frac{\mu_V(B_\ro(0))}{\ro^2}}\sqrt{\int_{B_\ro(0)\sm B_\si(0)} |H|^2\,d\mu_V}
	\end{split}
	\]
	for any $0<\si<\ro<+\infty$. Passing to the $\limsup_{\ro\to\infty}$ and then to $\si\to\infty$, we conclude that
	\[
	\lim_{\ro\to\infty} \left| \int_{B_\ro(0)} \frac{1}{\ro^2} \lgl \vec{H}, p \rgl \,d\mu_V \right|=0.
	\]
	Hence, assuming without loss of generality that $0\not\in S$, the monotone quantity $A(\ro)$ evaluated on $V$ with base point $0$ gives
	\[
	\exists\,\lim_{\ro\to\infty} A(\ro)= \cW(V) + \frac12 \int \frac{p}{|p|^2}\,d\si_V(p) + \limsup_{\ro\to\infty} \frac{\mu_V(B_\ro(0))}{\ro^2}, 
	\]
	and thus $\exists\,\lim_{\ro\to\infty}  \frac{\mu_V(B_\ro(0))}{\ro^2} \le K<+\infty$. Also the assumptions of Proposition \ref{prop:StructureProperties} are satisfied and we can assume that $M$ is closed.
	
	We can prove that $M$ has at least one unbounded connected component. In fact any compact connected component $N$ of $M$ defines a varifold $\bv(N,\te_V|_N)$ with generalized mean curvature; now if $S\cap N=\epty$ then $\cW(N)\ge4\pi$, and thus there are finitely many compact connected components without boundary, if instead $S\cap N\neq\emptyset$, $S\con B_{R_0}(0)$ by compactness, and $\exists\,p_0\in N\sm B_r(0)$ for $r>R_0$ but $N$ is compact, then the monotonicity formula applied on $\bv(N,\te_V|_N)$ at point $p_0$ gives
	\begin{equation}\label{eq15}
	\pi\le \lim_{\si\to0} A_{\bv(N,\te_V|_N)}(\si) \le \lim_{\ro\to\infty}  A_{\bv(N,\te_V|_N)}(\ro) \le \frac14\cW(\bv(N,\te_V|_N)) +\frac12 \frac{|\si_V|(S)}{r-R_0}.
	\end{equation}
	Since $M$ is essentially unbounded, if any connected component of $M$ is compact we would find infinitely many compact connected components $N$, points $p_0\in N$, and $r$ arbitrarily big in \eqref{eq15} so that the Willmore energy of any such $N$ is greater than $2\pi$, implying that $\cW(V)=+\infty$.\\	
	As $M$ has a connected unbounded component, for any $\ro$ sufficiently large there is $x_\ro\in M\cap B_{2\ro}(0)$. Applying the monotonicity formula on $V$ at $x_\ro$ for $\ro$ sufficiently big so that $S\con B_\ro(0)$ we get that
	\[
	\begin{split}
		\pi \le \lim_{\si\to0} A(\si) &\le \frac{\mu_V(B_\ro(x_\ro))}{\ro^2} + \frac14 \int_{B_\ro(x_\ro)} |H|^2\,d\mu_V + \frac{1}{\ro}\int_{B_\ro(x_\ro)} |H|\,d\mu_V \\
		&\le  9\frac{\mu_V(B_{3\ro}(0))}{(3\ro)^2} +\frac14 \int_{\R^3\sm B_\ro(0)} |H|^2\,d\mu_V + \ep \frac{\mu_V(B_\ro(x_\ro))}{\ro^2}  + C_\ep  \int_{B_\ro(x_\ro)} |H|^2\,d\mu_V ,
	\end{split}
	\]
	that implies that
	\[
	\lim_{\ro\to\infty} \frac{\mu_V(B_\ro(0))}{\ro^2} \ge \frac{\pi}{9+\ep},
	\]
	for any $\ep>0$.\\	
	Consider now any sequence $R_n\to\infty$ and the sequence of blow-in varifolds given by
	\[
	V_n=\bv\left(\frac{M}{R_n}, \te_n\right),
	\] 
	where $\te_n(x)=\te_V(R_nx)$. Since
	\[
	\mu_{V_n}(B_R(0)) =\frac{1}{R_n^2}\mu_V(B_{R_nR}(0))=\frac{1}{(RR_n)^2}\mu_V(B_{RR_n}(0))R^2\le K'R^2
	\]
	is bounded for any $R>0$, $\cW(V_n)=\cW(V)$, and $|\si_{V_n}|(\R^3)\to0$, by the classical compactness theorem of rectifiable varifolds (Theorem 42.7 in \cite{SiGMT}) we get that $V_n$ converges to an integer rectifiable varifold $W$ (up to subsequence). Also $W\neq0$, in fact $0\in\supp W$ by the fact that
	\[
	\mu_W(\overline{B_1(0)}) \ge \liminf_n \mu_{V_n} (B_1(0))= \liminf_n \frac{\mu_V(B_{R_n}(0))}{R_n^2}\ge \frac\pi9.
	\]
	We have that $W$ is stationary, in fact for any $r>0$ we have that
	\[
	\begin{split}
		\int_{\R^3\sm\overline{B_r(0)}} |H_W|^2\,d\mu_W \le \liminf_n \int_{\R^3\sm\overline{B_r(0)}} |H_{V_n}|^2\,d\mu_{V_n} = \liminf_n \int_{\R^3\sm\overline{B_{R_nr}(0)}} |H_V|^2\,d\mu_V=0. 
	\end{split}
	\]
	Also $\si_W=0$, in fact for any $X\in C^0_c(\R^3)$ the convergence of the first variation reads
	\[
	\lim_n -2 \int \lgl H_{V_n}, X \rgl\,d\mu_{V_n} +  \int X\,d\si_{V_n} = \lim_n -2 \int \lgl H_{V_n}, X \rgl\,d\mu_{V_n} = \int X\,d \si_V,
	\]
	and $\supp\si_V\con\{0\}$. Taking $X=\La_m Y$ for $Y\in C^0_c(\R^3)$ and
	\[
	\La_m(p)=\begin{cases}
	1-md(p,0) & d(p,0)\le\frac1m,\\
	0 & d(p,0)>\frac1m,
	\end{cases}
	\]
	we see that
	\[
	\left| \int \lgl H_{V_n}, X \rgl\,d\mu_{V_n} \right| = \left| \int_{B_{\frac1m}(0)} \lgl H_{V_n} , \La_m Y  \rgl \, d\mu_{V_n} \right| \le \|Y\|_\infty \cW(V)^{\frac12}\left(K'\frac{1}{m^2}\right)^{\frac12},
	\]
	and thus
	\[
	\int Y\,d\si_V = \lim_n -2 \int \lgl H_{V_n}, \La_m Y \rgl\,d\mu_{V_n} =\lim_{m\to\infty}  \lim_n -2 \int \lgl H_{V_n}, \La_m Y \rgl\,d\mu_{V_n} =0,
	\]
	for any $Y\in C^0_c(\R^3)$.\\	
	Finally the monotonicity formula applied on $W$ gives
	\[
	\lim_n \frac{\mu_V(R_n(0))}{R_n^2}\ge\liminf_n \mu_{V_n}(B_1(0))\ge\mu_W(B_1(0))\ge \lim_{\si\to0} A_W(\si) \ge \pi.
	\]
\end{proof}



\begin{thebibliography}{}
	
	
	\bibitem{AlKu} Alessandroni R., Kuwert E. : \emph{Local solutions to a free boundary problem for the Willmore functional}, Calc. Var. Partial Differential Equations, 55(2):Art. 24, 29 pp. (2016).
	
	\bibitem{AmFuPa00} Ambrosio L., Fusco N., Pallara D. : \emph{Functions of Bounded Variation and Free Discontinuity Problems}, Oxford Science Publications (2000).
	
	\bibitem{BaKu} Bauer M., Kuwert E. : \emph{Existence of minimizing Willmore surfaces of prescribed genus}, International Mathematics Research Notices 10 (2003), 553-576.
	%
	\bibitem{BeDaFr} Bergner M., Dall'Acqua A., Fr\"{o}hlich S. : \emph{Symmetric Willmore surfaces of revolution satisfying natural boundary conditions}, Calc. Var. Partial Differential Equations 39 (2010), no. 3-4, 361–378.
	
	\bibitem{BeJa} Bergner M., Jakob R. : \emph{Sufficient conditions for Willmore immersions in $\R^3$ to be minimal surfaces}, Ann. Glob. Anal. Geom. (2014) 45:129-146.
	
	
	\bibitem{DaDeGr} Dall'Acqua A., Deckelnick K., Grunau H. : \emph{Classical solutions to the Dirichlet problem for Willmore surfaces of revolution}, Adv. Calc. Var. 1 (2008), no. 4, 379-397.
	%
	\bibitem{DaFrGrSc} Dall'Acqua A., Fr\"{o}hlich S., Grunau H., Schieweck F. : \emph{Symmetric Willmore surfaces of revolution satisfying arbitrary Dirichlet boundary data}, Adv. Calc. Var. 4 (2011), no. 1, 1-81.
	%
	\bibitem{DeGr} Deckelnick K., Grunau H. : \emph{A Navier boundary value problem for Willmore surfaces of revolution}, Analysis (Munich) 29 (2009), no. 3, 229-258.
	%
	%
	\bibitem{Ei} Eichmann S. : \emph{Nonuniqueness for Willmore Surfaces of Revolution Satisfying Dirichlet Boundary Data}, J Geom Anal (2016) 26:2563-2590.
	
	\bibitem{Ei19} Eichmann S. : \emph{The Helfrich boundary value problem}, Calc. Var. Partial Differential Equations,
	58(1):Art. 34, 26 pp. (2019).
	
	\bibitem{ElFrHo} Elliott C.M., Fritz H., Hobbs G. :	\emph{Small deformations of Helfrich energy minimising surfaces with applications to biomembranes}, Math. Models Methods Appl. Sci. 27 (2017), no. 8, 1547-1586.
	
	\bibitem{GaGrSw} Gazzola F., Grunau H., Sweers G. : \emph{Polyharmonic boundary value problems. Positivity preserving and	nonlinear higher order elliptic equations in bounded domains}, Lecture Notes in Mathematics, 1991. Springer-Verlag, Berlin, 2010. xviii+423 pp.
	
	
	\bibitem{Hu} Hutchinson J. : \emph{Second fundamental form for varifolds and the existence of surfaces minimizing curvature}, Indiana University Math. Journal 35 (1986), 45-71.
	
	%
	
	\bibitem{KuSc} Kuwert E., Sch\"{a}tzle R. : \emph{Removability of point singularities of Willmore surfaces}, Annals of Mathematics 160 (2004), 315-357.
	%
	%
	
	\bibitem{LiYau} Li P., Yau S.-T. : \emph{A new conformal invariant and its applications to the Willmore conjecture and the first eigenvalue on compact surfaces}, Invent. Math. 69 (1982), 269–291.
		
	\bibitem{Mandel} Mandel R. : \emph{Explicit formulas, symmetry and symmetry breaking for Willmore surfaces of revolution},  Ann. Glob. Anal. Geom. (2018), Volume 54, Issue 2, pp 187-236.
	
	\bibitem{Ma} Mantegazza C. : \emph{Curvature varifolds with boundary}, Journal of Differential Geometry 43 (1996), 807-843.
	
	\bibitem{MaNeWC} Marques F.C., Neves A. : \emph{Min-Max theory and the Willmore Conjecture}, Annals of Mathematics 179 (2014), 683-782.
	
	\bibitem{Morgan} Morgan F. : \emph{Geometric Measure Theory: A Beginners's Guide}, Academic Press, Fourth Edition (2008).
	
	\bibitem{Nitsche} Nitsche J.C.C. : \emph{Boundary value problems for variational integrals involving surface curvatures}, Quart. Appl. Math., 51:363-387 (1993).
		
	\bibitem{Po18} Pozzetta M. : \emph{Confined Willmore energy and the Area functional}, 
	arXiv:1710.07133 (2017).
	
	\bibitem{Po19} Pozzetta M. : \emph{On the Plateau-Douglas problem for the Willmore energy of surfaces with planar boundary curves}, 
	arXiv:1810.07662 (2018). 
	
	\bibitem{RiAA} Rivi\`{e}re T. : \emph{Analysis aspects of Willmore surfaces}, Invent. math. 174 (2008), 1-45.
	
	\bibitem{RiLI} Rivi\`{e}re T. : \emph{Lipschitz conformal immersions from	degenerating Riemann surfaces with $L^2$-bounded second fundamental forms}, Adv. Calc. Var. 6 (2013), 1-31.
	
	\bibitem{RiVP} Rivi\`{e}re T. : \emph{Variational principles for immersed surfaces with $L^2$-bounded second fundamental form}, Journal f\"{u}r die reine und angewandte Mathematik 695 (2014), 41-98.
	
	\bibitem{ScBP} Sch\"{a}tzle R. : \emph{The Willmore boundary problem}, Calc. Var. 37 (2010), 275-302.
	
	\bibitem{Sc83} Schoen R. : \emph{Uniqueness, symmetry, and embeddedness of minimal surfaces}, J. Differential Geometry 18 (1983) 791-809.
	
	\bibitem{Schy} Schygulla J. : \emph{Willmore minimizers with prescribed isoperimetric ratio}, Arch. Ration. Mech. Anal. 203 (2012), 901-941.
	
	\bibitem{SeFr} Seguin B., Fried E. : \emph{Microphysical derivation of the Canham-Helfrich free-energy density}, J. Math. Biol. 68 (2014), no. 3, 647-665.
		
	\bibitem{SiGMT} Simon L. : \emph{Lectures on Geometric Measure Theory}, Proceedings of the Centre for Mathematical Analysis of Australian Nationa University (1984).
	
	\bibitem{SiEX} Simon L. : \emph{Existence of surfaces minimizing the Willmore functional}, Communications in Analysis and Geometry 1 (1993), 281-326.
		
	\bibitem{Wi65} Willmore T.J. : \emph{Note on embedded surfaces}, Annals of Alexandru Cuza University, Section I, 11B (1965), 493-496.
		
	\bibitem{WiRG} Willmore T.J. : \emph{Riemannian Geometry}, Oxford Science Publications (1993).
	
\end{thebibliography}
\end{document}